 \newcommand{\B}{{\mathbb  B}}
 \newcommand{\C}{{\mathbb  C}}
 \newcommand{\D}{{\mathbb D}}
 \newcommand{\F}{{\mathcal F}}
 \newcommand{\G}{{\mathcal G}}
 \renewcommand{\H}{{\mathcal H}}
 \renewcommand{\L}{{\mathcal L}}
 \newcommand{\N}{{\mathbb  N}}
 \newcommand{\OO}{{\mathcal O}}
 \newcommand{\Q}{{\mathbb  Q}}
 \newcommand{\R}{{\mathbb  R}}
 \newcommand{\T}{{\mathbb  T}}
 \newcommand{\Z}{{\mathbb  Z}}
 \newcommand{\scalar}[2]{{\langle#1,#2\rangle}}
 \newcommand{\Exp}{{\operatorname{Exp}}}
 \renewcommand{\Im}{{\operatorname{Im}}}
 \newcommand{\degree}{{\operatorname{deg}}}
 \newcommand{\ch}{{\operatorname{ch}}}
 \newcommand{\ext}{{\operatorname{ext}}}
\newcommand{\Loneloc}{{L_{\text{loc}}^1}}
 \newcommand{\Log}{{\operatorname{Log}}}
 \newcommand{\PSH}{{\operatorname{{\mathcal{PSH}}}}}
 \newcommand{\SH}{{\operatorname{{\mathcal {SH}}}}}
 \newcommand{\supp}{{\operatorname{supp}\, }}
 \newcommand{\USC}{{\operatorname{{\mathcal{USC}}}}}
 \newcommand{\LSC}{{\operatorname{{\mathcal{LSC}}}}}
\numberwithin{equation}{section}
\newtheorem{theorem+}           {Theorem}      [section]
\newtheorem{definition+}  [theorem+]  {Definition}
\newtheorem{lemma+}  [theorem+]  {Lemma}
\newtheorem{corollary+}  [theorem+]  {Corollary}
\newtheorem{proposition+}  [theorem+]  {Proposition}
\newtheorem{example+}  [theorem+]  {Example}
\newtheorem{problem+}  [theorem+]  {Problem}
\newtheorem{question+}  [theorem+]  {Question}
\newtheorem{remark+}  [theorem+]  {Remark}
\newenvironment{theorem}{\begin{theorem+}\sl}{\end{theorem+}\rm}
\newenvironment{definition}{\begin{definition+}\rm}{\end{definition+}\rm}
\newenvironment{lemma}{\begin{lemma+}\sl}{\end{lemma+}\rm}
\newenvironment{corollary}{\begin{corollary+}\sl}{\end{corollary+}\rm}
\newenvironment{proposition}{\begin{proposition+}\sl}{\end{proposition+}\rm}
\newenvironment{example}{\begin{example+}\rm}{\end{example+}\rm}
\newenvironment{proof}{\medbreak\noindent{\bf  Proof:}\rm}{\hfill$\square$\rm}
\newenvironment{prooftx}[1]{\medbreak\noindent{\bf 
    #1:}\rm}{\hfill$\square$\rm} 
\title{{\Large \bf 
Polynomials with exponents in  compact convex sets  and 
associated weighted extremal functions - \\
Fundamental results }}
\author{{Benedikt Steinar Magnússon, Álfheiður Edda Sigurðardóttir,} \\
{Ragnar Sigurðsson and Bergur Snorrason}}
\date{{}} 
\begin{document}
\maketitle

\begin{abstract} \noindent
This paper is a collection of fundamental results for the study of
polynomial rings $\mathcal P^S(\C^n)$ where the $m$-th degree
polynomials have exponents restricted to $mS$, where $S\subseteq
\R^n_+$ is compact, convex and $0\in S$. 
We study the relationship between $\mathcal P^S(\C^n)$ and the class
$\L^S(\C^n)$  of global plurisubharmonic functions where the growth is
determined by the logarithmic supporting function of $S$. 
We present properties of their respective weighted extremal functions
$\Phi_{K, q}^S$ and $V_{K, q}^S$ in connection with properties of
$S$. Our ambition is to give  detailed proofs  with minimal
assumptions of all results,  thus creating a self-contained
exposition.  
\medskip\par
\noindent{\em AMS Subject Classification}:
32U35. Secondary 32A08, 32A15, 32U15, 32W05.  
\end{abstract}

\section{Introduction}
\label{sec:01}

Approximation theory deals with problems of determining whether a
given function in some prescribed function space can be approximated 
by functions in a certain subspace.  The theorems of Runge and
Mergelyan are prototypes of results from approximation theory.
The Runge theorem states that every function $f$  holomorphic in some
neighborhood of a simply connected compact subset $K$ of $\C$
can be approximated in the uniform norm $\|\cdot\|_K$ 
on $K$ by polynomials and the Mergelyan
theorem states that it is enough to assume that $f$ is  continuous on 
$K$ and holomorphic in the interior of $K$.  

Quantitative approximation theory deals with problems of relating 
prop\-er\-ties of the given function $f$ to the error in approximation
which is usually measured as the distance from $f$ to a certain
finite dimensional subspaces of the approximating subspace. 
The Bernstein-Walsh theorem is a prototype
of a result from quantitative approximation theory.
It states that a holo\-morphic function $f$ defined in some neighborhood of a
compact simply connected  $K\subset\C$ extends as a holomorphic
function to the sublevel set 
$\{z\in \C\,;\, g_{\C\setminus K}(z,\infty)<\log R\}$ 
if and only if $\varlimsup_{m\to +\infty} d_m(f,K)^{1/m}\leq 1/R$,
where $R>1$, $g_{\C\setminus K}(\cdot,\infty)$ is the Green function  
of $\C\setminus K$ with logarithmic pole at $\infty$,
$d_m(f,K)$ is the distance from $f$ to the space ${\mathcal P}_m(\C)$
of all polynomials of degree $\leq m$ with respect to the uniform norm on $K$,
and it is assumed that the domain $\C\setminus K$ is regular for 
the Dirichlet problem in the sense that 
$g_{\C\setminus K}(\cdot,\infty)$ vanishes at the boundary of $K$.

The Runge theorem is generalized to several variables where simple 
connectedness of $K$ generalizes as polynomial convexity.
This generalization  is usually called the Oka-Weil theorem. 
There only exist fragmentary, but interesting, 
results generalizing the Mergelyan theorem to several
complex variables.
See Levenberg \cite{Lev:2006}.

The origin of the subject of the present paper is the generalization by
Siciak \cite{Sic:1962} of the Bernstein-Walsh theorem. 
There he introduced the extremal function 
$\Phi_K=\varlimsup_{m\to \infty}\Phi_{K,m}$, where
$\Phi_{K,m}=\sup\{|p|^{1/m} \,;\, p\in {\mathcal P}_m(\C^n),
\|p\|_K\leq 1\}$
and ${\mathcal P}_m(\C^n)$ is the space of all polynomials of degree
$\leq m$.    In his work
$\log\Phi_K$ plays the role of $g_{\C\setminus K}(\cdot,\infty)$
and the regularity condition at $\partial K$ 
is that $\Phi^*_K(z)=\varlimsup_{\zeta\to z} \Phi_K(\zeta)\leq 1$
for every $z\in \partial K$.  
Siciak's paper is a seminal work on the understanding of 
the interrelation between quantitative approximation theory in several
complex variables and  pluripotential theory. 
He studied this subject further in many of his works, for example
\cite{Sic:1981,Sic:1982,Sic:1990,Sic:1997}.
See also Bloom's Appendix B in the monograph by Saff and Totik 
\cite{SaffTotik:1997}.

We will systematically work with subspaces of the polynomial space
${\mathcal P}(\C^n)$ in $n$ variables.
For every non-empty bounded subset $S$ of $\R^n_+$  we 
let ${\mathcal P}^S_m(\C^n)$ denote 
the space of  all polynomials $p$ which can be written on the form 
$p(z)=\sum_{\alpha\in (mS)\cap \N^n} a_\alpha z^\alpha$, $z\in \C^n$,
and let  ${\mathcal P}^S(\C^n)=\bigcup_{m\in \N}{\mathcal P}^S_m(\C^n)$.
The standard simplex
$\Sigma=\ch\{0,e_1,\dots,e_n\}$, where $\ch\, A$ denotes 
the convex hull of a set $A$ and $(e_1,\dots,e_n)$ is the standard
basis in $\R^n$, yields the standard grading of
polynomials of degree $\leq m$, that is
${\mathcal P}^\Sigma_m(\C^n)={\mathcal P}_m(\C^n)$. 
If $S$ is a compact convex set in $\R^n_+$
with $0\in S$ then the space $\mathcal{P}^S(\C^n)$ forms a 
polynomial ring where the degree of a polynomial $p$
is the minimal $m\in \N$ such that $p\in \mathcal{P}^S_m(\C^n)$. 

The polynomial spaces
${\mathcal P}^S_m(\C^n)$  
appear in  Shiffman-Zelditch \cite{ShiZel:2004}
with $S$ as an  integral polytope and later in Bayraktar \cite{Bay:2017} as
sparse polynomials.   These polynomial classes 
and their relation to pluripotential theory 
have been studied by Bloom, Levenberg and their collaborators 
\cite{Bay:2015,Bay:2016,Bay:2017a,Bay:2017,Bay:2020, 
BayBloLev:2018,BayBloLevLu:2019,
BayHusLevPer:2020,
BloLev:2003,BloLev:2015, 
BloShi:2015,
BosLev:2018,LevPer:2020,Per:2023}.  
Unfortunately, in some of these papers there are false results stated
that we have not seen corrected.  We point them out and
correct them as far as we can.

\smallskip
In Section \ref{sec:02} 
we define the Siciak function  with respect to $S$, $E$, and
$q$ for  every function $q\colon E\to \R\cup\{+\infty\}$
on a subset $E$ of $\C^n$ by 
$\Phi^S_{E,q}=\varlimsup_{m\to\infty}\Phi^S_{E,q,m}$, where
$$
\Phi^S_{E,q,m}=\sup\{|p|^{1/m}\,;\, p\in {\mathcal P}^S_m(\C^n),
\|pe^{-mq}\|_E\leq 1\}, \qquad m=1,2,3,\dots.
$$ 
We drop $S$ in 
the superscript if $S=\Sigma$ and $q$ in the subscript if $q=0$.
The grading of the polynomial classes ${\mathcal P}^S_m(\C^n)$ 
implies  that for every $z\in \C^n$ the sequence $(-\log
\big(\Phi^S_{E,q,m}(z)\big)^{m})_{m\in \N^*}$ is
subadditive and a lemma by Fekete 
implies that 
$$
\Phi^S_{E,q}=\lim_{m\to \infty}\Phi^S_{E,q,m}
=\sup_{m\in \N^*}\Phi^S_{E,q,m}
$$ 
without any restriction on $S$ or $q$. 
This was first proved by
Siciak \cite[Theorem 6.1]{Sic:1962} for $S=\Sigma$.  
For the 
reader's convenience we prove the Fekete Lemma \ref{lem:2.3},
because ingredients from the proof are needed in the proof of 
Proposition \ref{prop:2.2}, where it is shown that the convergence 
is uniform on a compact subset $X$, 
if $q$ is bounded below   on $E$ and $\Phi^S_{E,q}$  
is continuous on $X$.
This statement on uniform convergence
appears in many arguments in pluri\-potential theory.
See for  example Bloom and Shiffman \cite[Lemma 3.2]{BloShi:2015},
Bayraktar \cite[Theorem 2.10]{Bay:2015,Bay:2017} and
Bayraktar, Hussung, Levenberg and Perera 
\cite[Theorem 1.1]{BayHusLevPer:2020}.  
The statements on uniform convergence in these papers 
all follow directly from Proposition \ref{prop:2.2}
under the conditions that $S$ is compact and convex with 
$0\in S$ and $q$ is bounded below.

\smallskip
In Section \ref{sec:03} 
we introduce the {\it logarithmic supporting function} $H_S$
of $S$  and the {\it Lelong class with respect to $S$}.
The function 
$H_S$ is defined on $\C^{*n}$ as the supporting function $\varphi_S$
of $S$, $\varphi_S(\xi)=\sup_{s\in S}\scalar s\xi$, $\xi\in \R^n$,
in logarithmic coordinates extended to $\C^n\setminus \C^{*n}$ as an
upper semicontinuous function and the  Lelong class  $\L^S(\C^n)$ with
respect to $S$ is  defined as the set of all  $u\in \PSH(\C^n)$ 
satisfying $u\leq c_u+H_S$, where $c_u$ is a  constant only depending
on $u$ and $\PSH(X)$ denotes the set of plurisubharmonic functions on 
an open set $X\subset \C^n$.  
We have $H_\Sigma(z)=\log^+\|z\|_\infty$, which implies that
$\L^\Sigma(\C^n)$ is equal to the Lelong class
$\L(\C^n)$.

\smallskip
What values $H_S$ takes at points on the coordinate hyperplanes
$\C^{n}\setminus \C^{*n}$  is opaque from its definition itself, 
but Proposition \ref{prop:3.3}  provides an explicit formula.
Additionally,  this formula shows that the zero set
${\mathcal N}(H_S)$ of $H_S$ may very well be unbounded.  
We prove in Proposition \ref{prop:3.4} that $H_S$ extends  from
$\C^{*n}$ as a continuous plurisubharmonic function on $\C^n$.

\smallskip
The usual grading of the polynomials can be characterized by growth, and
so is also the case for $\mathcal{P}^S(\C^n)$.   
It is clear that if $p\in \mathcal{P}^S_m(\C^n)$
then $|p|\leq C_pe^{H_S}$ for some constant $C_p>0$. In
Theorem  \ref{thm:3.6} we prove  a Liouville type theorem 
which states that if $f\in\OO(\C^n)$ 
satisfies a growth estimate 
$|f(z)|\leq C(1+|z|)^ae^{mH_S(z)}$ for some $C>0$ and some
$a>0$ strictly less than the distance from 
$mS$ to $\N^n\setminus mS$ in $L^1$-norm, 
then $f\in {\mathcal P}^S_m(\C^n)$.

\smallskip
In Section \ref{sec:04}  we define  the {\it  Siciak-Zakharyuta 
function} 
$$
V^S_{E,q}=\sup\{u\,;\, u\in \L^S(\C^n),\; u|_E\leq q\}
$$
with respect to $S$ and any $q \colon E\to \R\cup\{+\infty\}$
on a subset $E$ of $\C^n$.  We drop $S$ in 
the superscript if $S=\Sigma$ and $q$ in the subscript if $q=0$.
By Klimek \cite[Example 5.1.1]{Kli:1991}
$V_K(z)=\log^+(\|z-a\|/r)$ for any norm $\|\cdot\|$ 
and $K=\{z\in \C^n\,;\, \|z-a\|\leq r\}$, $r>0$, the closed 
ball in $\|\cdot\|$ with center $a$ and radius $r$.
These classical examples can not be generalized 
for $V^S_{E}$ for the simple reason that 
the Lelong class $\L^S(\C^n)$ with respect to $S$ 
does not need to be translation invariant.  
We prove in  Proposition \ref{prop:4.3} that $V^S_{E}=H_S$ for any
subset $E$ of $\C^n$ such that 
$\T^n\subseteq E\subseteq \mathcal N(H_S)$, where $\T$ denotes the
unit circle  in $\C$ and $\mathcal N(H_S)$ the zero set of $H_S$.   
This is a generalization of
Bayraktar  \cite[Example 2.3]{Bay:2017}. 

\smallskip
We introduce {\it admissible weights} in 
Definition \ref{def:4.4}, where  we follow 
Bloom \cite[Appendix~B]{SaffTotik:1997} with
the natural generalization for the Lelong classes with respect to  $S$ 
given in  Bayraktar, Bloom, and Levenberg \cite{BayBloLev:2018}.
In Proposition \ref{prop:4.5} we prove that 
for every compact convex $S\subset \R^n_+$  with $0\in S$ and
admissible weight $q$ on a closed subset $E$ the upper regularization
$V^{S*}_{E,q}$ of $V^S_{E,q}$ is in $\L^S_+(\C^n)$, the set
of $u\in \L^S(\C^n)$ such that $H_S-c_u\leq u$ for 
some constant $c_u$.  

\smallskip
It is a fundamental problem to characterize those $S$ and 
admissible weights $q$ for which $V^S_{E,q}=\log \Phi^S_{E,q}$. 
The classical Siciak-Zakharyuta theorem states that $V_K=\log \Phi_K$, 
for every compact subset $K$ of $\C^n$, see \cite[Theorem 5.1.7]{Kli:1991}.
It was first proved by Zakharyuta \cite{Zak:1976} and generalized 
to $V_{K,q}=\log \Phi_{K,q}$ for continuous $q$ on a compact $K$  
by Siciak \cite[Theorem 4.12]{Sic:1981}.  
Magnússon, Sigurðardóttir, and Sigurðsson  
\cite{MagSigSig:2023}  prove that 
for every compact and convex $S\subset \R^n_+$  with $0\in S$
and every  admissible weight $q$  on a closed $E\subset \C^n$,
the equation 
$V^S_{E,q}=\log\Phi^S_{E,q}$ holds on $\C^{*n}$
if and only if $S\cap \Q^n$ is dense in $S$. 
In the case when $V^S_{E,q}$ is lower semicontinuous equality holds on $\C^n$. 
In Proposition \ref{prop:4.7} 
we show that if $S\cap\Q^n$ is not dense in $S$, then
$V^S_{E,q}\neq\log \Phi^S_{E,q}$ for any admissible weight $q$ on a
closed set $E\subseteq \C^n$.   
Magnússon, Sigurðsson, and Snorrason \cite{MagSigSno:2023} prove 
a generalization of 
the Bernstein-Walsh-Siciak theorem  to the weighted setting  with
approximation by polynomials from  the polynomial 
ring ${\mathcal P}^S(\C^n)$.

\smallskip
In Section \ref{sec:05} we study regularity of $V^S_{E,q}$.
We introduce a regularization operator
$R_\delta$ that preserves the class $\L^S(\C^n)$ and has the property
that $R_\delta u\in \mathcal C^\infty(\C^{*n})\cap \L^S(\C^n)$  for
every $u\in \L^S(\C^n)$ and $R_\delta u\searrow u$ as $\delta\searrow 0$. 
This property enables us to prove in 
Proposition \ref{prop:5.4} that $V^{S}_{K,q}$ is lower semicontinuous
on $\C^{*n}$ for every $S$ and every admissible weight $q$ on a
compact set $K$. It is crucial to know if 
the upper regularization $V^{S*}_{E,q}$ of $V^{S}_{E,q}$ satisfies 
$V^{S*}_{E,q}\leq q$ on $K$, for then $V^{S*}_{E,q}=V^{S}_{E,q}$ and
$V^{S}_{E,q}$ is continuous at every point where it
is lower semicontinuous.    
    
It is a natural question to ask under which
conditions on $S$ the class $\L^S(\C^n)$ is preserved under
the standard method for 
regularization of plurisubharmonic functions, that is  convolution 
$u\mapsto u*\psi$,
where 
$\psi \in {\mathcal C}_0^\infty(\C^n)$ with
$\psi\geq 0$, and $\int_{\C^n}\psi \, d\lambda=1$.  
In Theorem \ref{thm:5.8} we
prove that $\L^S(\C^n)$ is preserved under convolution if and only if 
$S$ is a lower set, which says that the cube
$C_s=[0,s_1]\times\cdots\times[0,s_n]$ 
is contained in $S$ for every $s\in S$.  This is a  complete answer to
a question raised by 
Bayraktar, Hussung, Levenberg, and Perera \cite[Section 2]{BayHusLevPer:2020}.

\smallskip
In Section \ref{sec:06} we consider an equilibrium measure
$\mu^S_{E,q}=\big(dd^c V^{S*}_{E,q}\big)^n$  for $V^{S*}_{E,q}$. 
We prove in Theorem \ref{thm:6.1} that its 
support is located where $V^{S*}_{K,q}\geq q$ and
in Theorem \ref{thm:6.2} we prove that 
 its total mass is
$\mu^S_{E,q}(\C^n)=(2\pi)^nn!\operatorname{vol}(S)$ where
$\operatorname{vol}$ denotes the euclidean volume. See  also 
Bayraktar \cite[Proposition 2.7]{Bay:2017}
 and Rashkovskii \cite[Section 3]{Ras:2000}.  

\smallskip
In Section \ref{sec:07} we return to the problem of characterizing
the classes ${\mathcal P}^S_m(\C^n)$ by growth properties we started
in Theorem \ref{thm:3.6}.  Our main result, 
Theorem \ref{thm:7.2}, is that
every  entire function $f$ in the weighted $L^2$-space
$L^2(\C^n,\psi)$ consisting of all measurable functions which are
square integrable with respect to the Lebesgue measure $\lambda$ on 
$\C^n$ with weight $e^{-\psi}$ and $\psi=2mH_S+a\log(1+|\cdot|^2)$
is a polynomial in ${\mathcal P}^{\widehat S_\Gamma}_m(\C^n)$,  
where $\widehat S_\Gamma$ is the $\Gamma$-hull of $S$ 
defined by $\widehat S_\Gamma=\{x\in \R^n_+\,;\, \scalar x\xi\leq
\varphi_S(\xi),  \forall \xi\in \Gamma\}$ 
for a certain cone  $\Gamma\subset \R^n$.
Bayraktar, Hussung, Levenberg, and Perera 
\cite[Proposition 4.3]{BayHusLevPer:2020}
%,  
%and earlier Bayraktar \cite{Bay:2015,Bay:2017}, Proposition 2.9,
claim that $f\in {\mathcal P}^S_m(\C^n)$ for any polytope $S$ and $a$
sufficiently small.  Example \ref{ex:7.4} shows that their claim is 
false even if $S$ is a polytope containing a neighborhood of $0$ in
$\R^n_+$.  The  Siciak-Zakharyuta type theorem
\cite[Theorem 1.1]{BayHusLevPer:2020}
does not have a sound proof as it is based on their Proposition 4.3.
As far as we can see the proof is only valid if $S$ is a lower set.

\smallskip
In Section \ref{sec:08} we continue the discussion in Perera
\cite{Per:2023} and show that for any polynomial map $f\colon 
\C^n\to \C^\ell$ and any compact convex $0\in S\subseteq \C^n$, there
is a canonical minimal choice of $S'$ such that $f^*\colon \mathcal
P^S_m(\C^n)\to \mathcal P^{S'}_m(\C^\ell)$ is well-defined for all
$m\in \N$ and  $f^*\colon \L^S(\C^n)\to \L^{S'}(\C^\ell)$ is 
well-defined. In the case when $\ell=n$ we show 
when such pullbacks are bijective. 

\smallskip
Snorrason \cite{Sno:2024} generalized  
the Siciak product formula, 
Siciak \cite{Sic:1962,Sic:1981} and 
Klimek \cite[Theorem 5.1.8]{Kli:1991}.
He shows in Corollary 1.3, 
that the gen\-er\-al\-iza\-tion of Bos and Levenberg
\cite{BosLev:2018} of Siciak's product formula only holds for 
lower sets.  This shows that both 
Levenberg and Perera \cite[Proposition 1.3]{LevPer:2020}, 
and Nguyen Quang Dieu and Tang Van Long 
\cite[Theorem 1.3]{NguTan:2021} do not hold.  
The error in both the papers is the same, the authors implicitly 
assume that $\varphi_S(\xi)=\varphi_S(\xi^+)$ holds for every
$\xi\in \R^n$, where $\xi_j^+ = \max\{0,\xi_j\}$,
%\mapsto x^+$ is the positive part of a real number,
but in Theorem \ref{thm:5.8} we prove that this identity holds if and
only if $S$ is a lower set. 
In \cite[Section 5]{Sno:2024} Snorrason shows that the sublevel sets 
of $V^S_K$ are not convex in general. 
%Example \ref{ex:9.3} we take an example of a set $S\subset \R ^2_+$ 
%for which ${\mathcal N}(H_S)\subset \C^2$ 
%contains both the coordinate axes and it implies that
%the sublevel sets $V^S_{\overline \D^n}$ 
%are not  convex.   
This contradicts     
Nguyen Quang Dieu and Tang Van Long \cite[Theorem 1.2]{NguTan:2021}
where they claim that for every convex body $S$ and every compact 
convex $K\subset \C^n$ the sublevel sets of $V^S_K$ are convex.
All these mistakes showed us the importance of a careful study of the values
of $H_S$ near points on the union of the coordinate hyperplanes as we
have done in Propositions \ref{prop:3.3} and \ref{prop:3.4}.

\subsection*{Acknowledgment}  
The results of this paper are a part of a research project, 
{\it Holomorphic Approximations and Pluripotential Theory},
project grant 
no.~207236-051, supported by the Icelandic Research Fund.
We would like to thank the Fund for its support and the 
Mathematics Division, Science Institute, University of Iceland,
for hosting the project.   Parts of the paper were written while
the second and third authors were visitors at 
Mathe\-mat\-i\-cal Sciences,
Chalmers University of Technology and University of Gothen\-burg,
 Sweden. They would like to thank the
members of the institute for their hospitality and generosity. 
We thank the referees for their constructive and helpful comments 
which have improved the quality and clarity of the paper. 

\section{Weighted polynomial classes and Siciak functions}
\label{sec:02}

\noindent
Let $S$ be a bounded subset of $\R_+^n = \{x\in \R^n; x_j\geq 0 
\text{ for all } j=1,\ldots,n\}$.
For every $m\in \N$ we associate to $S$  the space 
${\mathcal P}_m^S(\C^n)$  of all polynomials 
in $n$ complex variables of the form
\begin{equation}
  \label{eq:2.1}
  p(z)=\sum_{\alpha \in (mS)\cap \N^n} a_\alpha z^\alpha, \qquad
  z\in \C^ n,
\end{equation}
with the standard multi-index notation
and let ${\mathcal P}^S(\C^n)=\bigcup_{m\in \N} {\mathcal P}_m^S(\C^ n)$.  
If $S$ is the standard simplex
$\Sigma =\ch\{0,e_1,\dots,e_n\}$, then 
the space 
${\mathcal P}_m^\Sigma(\C^n)$ consists of all polynomials of degree
$\leq m$, which we denote  by  ${\mathcal P}_m(\C^n)$.
We let  ${\mathcal P}(\C^n)=\bigcup_{m\in \N} {\mathcal P}_m(\C^ n)$
denote the space of all polynomials in $n$ complex variables. 

\smallskip
Assume now that  $S$ is a compact convex subset of $\R_+^n$ with $0\in S$.
If $\alpha \in jS$ and $\beta
\in kS$ for some  $j,k\in \N^*$, 
say $\alpha=ja$ and $\beta=kb$ with $a,b\in S$, then
convexity of $S$ gives 
$ \alpha+\beta =(j+k)\big((1-\lambda)a + \lambda b \big)  \in (j+k)S$, 
where $\lambda=k/(j+k)\in [0,1]$.  Thus, 
 $z^\alpha z^\beta\in {\mathcal P}^S_{j+k}(\C^n)$ and 
by taking linear combinations of products of monomials we get 
\begin{equation}
    \label{eq:2.2}
{\mathcal P}_j^S(\C^n){\mathcal P}_k^S(\C^n)\subseteq {\mathcal
  P}_{j+k}^S(\C^n).
\end{equation}
This tells us  that  ${\mathcal P}^S(\C^n)$ is a subring of
$\mathcal P(\C^n)$.  
For every $p\in {\mathcal P}^S(\C^n)$ we define the {\it $S$-degree}  
$\degree^S(p)$ of $p$ as the infimum over $m$ for which  $p\in
{\mathcal P}_m^S(\C^n)$.  We have 
\begin{align}
  \label{eq:2.3}
  \degree^S(p_1+p_2) &\leq   \max\{\degree^S(p_1),\degree^S(p_2)\},\\ 
  \degree^S(p_1p_2) &\leq   \degree^S(p_1) +  \degree^S(p_2).
  \label{eq:2.4}
\end{align}
Equality does not hold in general  in either of these
inequalities.

\begin{definition}\label{def:2.1}
Let $S\subset \R^n_+$ be a compact convex set,  $0\in S$,
and   $q\colon E\to \R\cup\{+\infty\}$ be a function on 
$E\subset \C^n$.  For  $m\in \N^*=\{1,2,3,\dots\}$  
the {\it $m$-th Siciak extremal function with respect to $S$, $E$, and
  $q$}  is defined by 
\begin{equation*}
  \Phi^S_{E,q,m}(z)
=\sup \{|p(z)|^{1/m} \,;\, p\in {\mathcal P}_{m}^S(\C^n),
\|pe^{-mq}\|_E\leq 1 \}, 
\qquad z\in \C^n,
\end{equation*}
the {\it Siciak extremal function with respect to $S$, $E$, and $q$} is 
defined by
\begin{equation*}
  \Phi^S_{E,q}(z)=\varlimsup_{m\to \infty}  \Phi^S_{E,q,m}(z), 
\qquad z\in \C^n.
\end{equation*} 
We drop $S$ in the superscripts 
if $S=\Sigma$ and $q$ in the subscripts if $q=0$.   
Note that the family $\{p\in \mathcal P_m^S(\C^n); \|pe^{-mq}\|_E \leq 1\}$ is 
never empty since it always contains the zero polynomial. 
Furthermore, we define $\Phi^S_{E,q,0} = 1$.
\end{definition}

\smallskip
Observe that our definition of  $\Phi^S_{E,q,m}$
deviates from the original definition  of 
Siciak \cite{Sic:1962}, which is $\big(\Phi^S_{E,q,m}\big)^m$
in our notation.  
We have that $\Phi^S_{E,q}, \Phi^S_{E,q,m}$ %\in \LSC(\C^n)$,
are lower semicontinuous on $\C^n$
for $m=1,2,3,\dots$, for all these functions are  
suprema of continuous functions. 
%Here $\LSC(\C^n)$ denotes the 
%family of lower semicontinuous function on $\C^n$.

If $q$ is bounded below, say by the real number $q_0$, then
the constant polynomial $p(z)=e^{mq_0}$ is in ${\mathcal P}^S_m(\C^n)$ and
$\|pe^{-mq}\|_E=\|e^{-m(q-q_0)}\|_E\leq 1$. Hence, it follows that
$\Phi^S_{E,q,m}(z)\geq e^{q_0}$ for every $z\in \C^n$ and
$m\in \N^*$.  

\smallskip
\begin{proposition}
  \label{prop:2.2} 
Let $S\subset \R^n_+$ be a compact convex set with  $0\in S$,
$E\subset \C^n$, and   $q\colon E\to \R\cup\{+\infty\}$ be a function.
Then for $j,k=1,2,3,\dots$
\begin{equation}
  \label{eq:2.5}
  \big(\Phi^S_{E,q,j}(z)\big)^j
\big(\Phi^S_{E,q,k}(z)\big)^k \leq 
\big(\Phi^S_{E,q,j+k}(z)\big)^{j+k}, \qquad z\in \C^n,
\end{equation}
and 
\begin{equation}
  \label{eq:2.6}
  \Phi^S_{E,q}(z)= \lim_{m\to \infty} \Phi^S_{E,q,m}(z)
=\sup_{m\geq 1} \Phi^S_{E,q,m}(z), \qquad z\in \C^n.
\end{equation}
If $q$ is bounded below  and  
$\Phi^S_{E,q}$ is continuous on some compact
subset $X$ of $\C^n$, then  the convergence is uniform on $X$.
\end{proposition}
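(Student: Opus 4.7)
My plan is to derive (\ref{eq:2.5}) directly from the grading identity (\ref{eq:2.2}) for $\mathcal{P}^S(\C^n)$, deduce (\ref{eq:2.6}) via Fekete's subadditive lemma, and establish uniform convergence through a compactness argument that exploits the uniform positive lower bound on $\Phi^S_{E,q,m}$ coming from $q\geq q_0$.

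For (\ref{eq:2.5}), I would take $p\in\mathcal{P}^S_j(\C^n)$ and $r\in\mathcal{P}^S_k(\C^n)$ satisfying $\|pe^{-jq}\|_E\leq 1$ and $\|re^{-kq}\|_E\leq 1$; then $pr\in\mathcal{P}^S_{j+k}(\C^n)$ by (\ref{eq:2.2}) and $\|(pr)e^{-(j+k)q}\|_E\leq 1$, so $|p(z)r(z)|^{1/(j+k)}\leq \Phi^S_{E,q,j+k}(z)$. Raising this to the $(j+k)$-th power and taking suprema independently over such $p$ and $r$ yields (\ref{eq:2.5}). Taking logarithms in (\ref{eq:2.5}) shows that for every fixed $z\in\C^n$ the extended-real sequence $v_m(z):=-m\log\Phi^S_{E,q,m}(z)$ is subadditive in $m$, so Fekete's lemma gives $v_m(z)/m\to\inf_{m\geq 1} v_m(z)/m$; exponentiating produces (\ref{eq:2.6}) and in particular shows that the $\varlimsup$ in the definition of $\Phi^S_{E,q}$ is an honest limit.

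For uniform convergence on the compact set $X$, the assumption $q\geq q_0$ gives $\Phi^S_{E,q,m}\geq e^{q_0}>0$ via the admissible constant polynomial $p\equiv e^{mq_0}$, so $v_m$ is finite with $v_m\leq -mq_0$, and upper semicontinuous because $\Phi^S_{E,q,m}$ is LSC and strictly positive. Set $L:=-\log\Phi^S_{E,q}=\inf_m v_m/m$; by hypothesis $L$ is continuous and hence bounded on $X$, and $v_m/m\geq L$ pointwise. Given $\varepsilon>0$, at each $z_0\in X$ I would choose an index $m_0$ with $v_{m_0}(z_0)/m_0<L(z_0)+\varepsilon$; USC of $v_{m_0}/m_0$ together with continuity of $L$ then provides an open neighborhood of $z_0$ on which $v_{m_0}/m_0\leq L+2\varepsilon$, and compactness yields a finite subcover of $X$ with indices $m_1,\dots,m_r$. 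For $z\in X$ lying in the $i$-th neighborhood and arbitrary $m$, writing $m=\ell m_i+s$ with $0\leq s<m_i$, subadditivity gives $v_m\leq \ell v_{m_i}+v_s$ (with the convention $v_0:=0$), so
\begin{equation*}
  v_m(z)/m \leq (1-s/m)\bigl(L(z)+2\varepsilon\bigr) + v_s(z)/m.
\end{equation*}
The term $v_s(z)/m$ vanishes uniformly in $z\in X$ as $m\to\infty$ because $s$ stays in the finite set $\{0,1,\dots,\max_i m_i-1\}$ and each $v_s$ is bounded above on $X$ by $-sq_0$; the first term equals $L(z)+2\varepsilon+O(1/m)$ uniformly in $z$ since $L$ is bounded on $X$. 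For $m$ sufficiently large one has $v_m/m\leq L+3\varepsilon$ on $X$, which combined with $v_m/m\geq L$ gives uniform convergence $v_m/m\to L$. Exponentiation preserves this since $L$ is continuous and bounded on $X$, yielding the conclusion.

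The main obstacle is this last step: the sequence $\Phi^S_{E,q,m}$ need not be monotone in $m$, so Dini's theorem is unavailable, and the subadditive estimate has to be combined delicately with the USC of $v_m$ and the continuity of $L$. The lower bound on $q$ is precisely what keeps $v_m$ finite everywhere and allows uniform control of the remainder $v_s/m$; without it the finitely many functions $v_s$ could take the value $+\infty$ and the argument would collapse.
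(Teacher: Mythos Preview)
Your proof is correct and follows essentially the same approach as the paper: both deduce (\ref{eq:2.5}) from the grading (\ref{eq:2.2}), obtain (\ref{eq:2.6}) via Fekete's lemma, and establish uniform convergence by combining the subadditive decomposition $m=\ell j+s$ with lower semicontinuity of $\Phi^S_{E,q,j}$, continuity of $\Phi^S_{E,q}$, the lower bound coming from $q\geq q_0$, and compactness of $X$. The only differences are organizational---you work in logarithmic coordinates throughout and extract the finite cover before performing the Fekete remainder estimate, whereas the paper works directly with $\Phi^S_{E,q,m}$ and applies the compactness argument at the end.
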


We need ingredients from the proof from 
Tsuji  \cite[Lemma after Theorem III.25 on page 73]{Tsuji:1959}:

\begin{lemma} {\bf (Fekete lemma)} \ 
  \label{lem:2.3}
Let $(a_m)_{m\in \N^*}$ be a subadditive real  sequence, 
that is  $a_{j+k}\leq a_j+a_k$ for $j,k=1,2,3,\dots$.  Then
$\lim\limits_{m\to\infty} a_m/m=\inf\limits_{m\geq 1}a_m/m$.
\end{lemma}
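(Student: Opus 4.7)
The plan is to set $L := \inf_{m \geq 1} a_m/m \in [-\infty,+\infty)$ and show that $\limsup_{m\to\infty} a_m/m \leq L$; the reverse inequality $\liminf_{m\to\infty} a_m/m \geq L$ is immediate from the definition of the infimum, since $a_m/m \geq L$ holds for every $m$. Combining the two inequalities will yield both the convergence and the value of the limit.

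To obtain the upper bound, the standard device is Euclidean division by a well-chosen index. I would fix $k\in \N^*$ whose ratio $a_k/k$ is (in the first instance) arbitrarily close to $L$, and for every $m > k$ write $m = qk + r$ with $q\geq 1$ and $0\leq r < k$. Iterating the subadditivity hypothesis $q$ times gives $a_{qk} \leq q\,a_k$, and one further application yields $a_m \leq q\,a_k + a_r$ when $r\geq 1$, while $a_m \leq q\,a_k$ when $r = 0$. Dividing by $m$ and letting $m\to\infty$ with $k$ fixed produces $\limsup_{m\to\infty} a_m/m \leq a_k/k$, because $q/m \to 1/k$ and the remainder term satisfies $|a_r/m| \leq M_k/m \to 0$, where $M_k := \max_{1\leq s < k}|a_s|$ is a constant depending only on $k$.

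Taking the infimum over $k$ then gives $\limsup_{m\to\infty} a_m/m \leq L$ when $L > -\infty$. The case $L = -\infty$ is handled by the identical argument: for any real $C$, one chooses $k$ with $a_k/k < C$ and deduces $\limsup_{m\to\infty} a_m/m \leq C$, whence the limsup equals $-\infty$. No serious obstacle arises; the only minor point deserving care is that $a_r$ can be of either sign for small $r$, but since $r$ ranges over a finite set of indices bounded by the fixed $k$, its contribution to $a_m/m$ is $O(1/m)$ and vanishes in the limit.
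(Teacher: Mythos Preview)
Your argument is correct and follows essentially the same route as the paper: fix an index, apply Euclidean division, iterate subadditivity to bound $a_m$ by $q\,a_k + a_r$, divide by $m$, and observe that the remainder contribution vanishes as $m\to\infty$. The only cosmetic difference is that the paper first fixes an arbitrary $\beta>L$ and chooses $j$ with $a_j/j<\beta$, whereas you show $\limsup_m a_m/m\leq a_k/k$ for every $k$ and then take the infimum; your explicit treatment of the case $r=0$ is in fact slightly cleaner than the paper's.
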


\begin{proof}
Denote the infimum of $a_m/m$ by $\alpha$ and take $\beta\in \R$ such that
$-\infty\leq \alpha <\beta$. Then there exists $j\in \N^*$ such that
$a_j/j<\beta$. Every $m>j$ can be written as $m=sj+r$ for some
$s,r\in \N$ with  $s\geq 1$  and $0\leq r<j$.  By assumption, we have
$a_m\leq a_{sj}+a_r\leq sa_j+a_r$, so
\begin{equation}
  \label{eq:2.7}
\dfrac {a_m}m \leq \dfrac{sa_{j}+a_r}{m} < \beta \big(1-r/m\big)
+\dfrac{\max_{\ell<j}a_\ell}m,  
\end{equation}
which implies 
$\varlimsup\limits_{m\to \infty} a_m/m \leq \beta$.  Since $\beta$ is
arbitrary  we have
$$
\varlimsup_{m\to \infty}a_m/m 
\leq \inf_{m\geq 1} a_m/m 
\leq \varliminf_{m\to \infty}a_m/m. 
$$
Hence, the limit (possibly $-\infty$) exists and the equality holds.
\end{proof}

\begin{prooftx}{Proof of Proposition \ref{prop:2.2}}
Take $p_j\in{\mathcal P}^S_j(\C^n)$ with
$\|p_je^{-jq}\|_E\leq 1$ and
$p_k\in{\mathcal P}^S_k(\C^n)$ with
$\|p_ke^{-kq}\|_E\leq 1$.  
Then $\|p_jp_ke^{-(j+k)q}\|_E\leq 1$ and
(\ref{eq:2.2}) implies 
$p_jp_k\in {\mathcal P}^S_{j+k}(\C^n)$. Hence,
$|p_j(z)p_k(z)|\leq \big(\Phi^S_{E,q,j+k}(z)\big)^{j+k}$.
By taking supremum over $p_j$ and $p_k$ 
(\ref{eq:2.5}) follows. By (\ref{eq:2.5}) 
the sequence defined by $a_m=-\log \big(\Phi^S_{E,q,m}(z)\big)^{m}$ is
subadditive for every $z$, 
so (\ref{eq:2.6}) follows from Lemma \ref{lem:2.3}.

Assume now that $q\geq q_0$ for some $q_0\in \R$ and
that $\Phi^S_{E,q}$ is continuous on $X$. 
By the discussion after Definition \ref{def:2.1} we have
$\Phi^S_{E,q,m}\geq e^{q_0}$.
Since  $\Phi^S_{E,q}=\sup_{m\geq 1}\Phi^S_{E,q,m}$, it follows by a
simple compactness argument that it is sufficient to show that 
for every $z_0\in X$ and every $\varepsilon>0$ there exists 
$\delta>0$ and $k\in \N^*$ both depending on $z_0$ and $\varepsilon$
such that 
\begin{equation}
  \label{eq:2.8}
  \Phi^S_{E,q}(z)-\Phi^S_{E,q,m}(z)<\varepsilon, \qquad z\in 
  B(z_0,\delta)\cap X, \ m\geq k.
\end{equation}
Let $c=\sup_X \Phi^S_{E,q}$ and choose $\gamma>0$ such that 
$c(1-e^{-\gamma})<\tfrac 14 \varepsilon$ and $j\in \N^*$ so large that
$\Phi^S_{E,q}(z_0)-\Phi^S_{E,q,j}(z_0)<\tfrac 14 \varepsilon$.  
Since $\Phi^S_{E,q}$ is continuous on $X$ and
 $\Phi^S_{E,q,j}$ is lower semicontinuous on $\C^n$, there exists
$\delta>0$ such that for all 
$z\in B(z_0,\delta)\cap X$ we have
$$
\Phi^S_{E,q}(z)-\Phi^S_{E,q}(z_0) <\tfrac 14 \varepsilon 
\quad \text{ and } \quad 
\Phi^S_{E,q,j}(z_0)-\Phi^S_{E,q,j}(z) <\tfrac 14 \varepsilon.
$$
The three estimates imply
\begin{equation}
  \label{eq:2.9}
  \Phi^S_{E,q}(z)-\Phi^S_{E,q,j}(z)<\tfrac 34 \varepsilon, \qquad z\in 
  B(z_0,\delta)\cap X, 
\end{equation}
and (\ref{eq:2.8}) follows from (\ref{eq:2.9})
if we can prove that there exists $k>j$ such that 
\begin{equation}
  \label{eq:2.10}
  \Phi^S_{E,q}(z)-\Phi^S_{E,q,m}(z)
\leq   \Phi^S_{E,q}(z)-\Phi^S_{E,q,j}(z)+\tfrac 14 \varepsilon, 
\qquad z\in X, \ m\geq k.   
\end{equation}
For every $z\in \C^n$ the sequence  $a_m=-\log\big(\Phi^S_{E,q,m}(z)\big)^m$ is 
subadditive. By  (\ref{eq:2.7}) we have 
for every $m>j$ written as $m=sj+r$ with $s\in \N^*$ and $r\in \N$
with $0\leq r<j$ that 
$$
-\log\Phi^S_{E,q,m}(z) \leq 
\dfrac{-sj\log\Phi^S_{E,q,j}(z)-r\log\Phi^S_{E,q,r}(z)}m, \qquad z\in \C^n.
$$ 
We have $sj/m=1-r/m$, 
$-\log \Phi^S_{E,q,j}(z)\geq -\log c$ for every $z\in X$, and 
that $\log \Phi^S_{E,q,r}(z)\geq q_0$ for every $z\in \C^n$, so 
\begin{align*}
\log\Phi^S_{E,q,m}(z) &\geq \log\Phi^S_{E,q,j}(z)
+\dfrac{-r\log\Phi^S_{E,q,j}(z)+r\log\Phi^S_{E,q,r}(z)}m\\
&\geq 
\log\Phi^S_{E,q,j}(z)-  \dfrac{j\log(c/e^{{q_0}})}m, \qquad z\in X.
\end{align*}
We choose $k>j$ so large that $j\log(c/e^{{q_0}})/k<\gamma$.
Then  $\Phi^S_{E,q,m}(z)\geq \Phi^S_{E,q,j}(z)e^{-\gamma}$ and 
$$
\Phi^S_{E,q}(z)-\Phi^S_{E,q,m}(z)
\leq \Phi^S_{E,q}(z)-\Phi^S_{E,q,j}(z)+
\Phi^S_{E,q,j}(z)\big(1-e^{-\gamma}\big)
$$
for every $z\in X$ and $m\geq k$. 
Since $\Phi^S_{E,q,j}(z)\leq c$ for every $z\in X$ and
$c(1-e^{-\gamma})<\tfrac 14 \varepsilon$  the estimate 
(\ref{eq:2.10}) holds.
\end{prooftx}

\section{The Lelong class with respect to a convex set}
\label{sec:03}

Let us begin by setting some notation for the sequel. 
We denote by
$\H(X)$, $\SH(X)$, $\PSH(X)$,  $\OO(X)$, $\LSC(X)$, and $\USC(X)$ 
the classes of harmonic and subharmonic functions on a domain $X$ in $\C$,
plurisubharmonic and holomorphic functions on a complex manifold $X$, 
and lower and upper semicontinuous functions on a topological space
$X$, respectively.    We define 
the coordinate-wise logarithm of the modulus, exponential function,
and positive part, by
\begin{gather*}
  \Log \colon \C^{*n}\to \R^n, \quad 
\Log(z)=(\log|z_1|,\dots,\log|z_n|), \qquad   z\in \C^{*n},
\\
\Exp\colon \R^n\to \R^n_+, \quad 
\Exp(\xi)=e^\xi=(e^{\xi_1},\dots,e^{\xi_n}), \qquad \xi\in \R^n,
\\
{}^+\colon \R^n\to \R^n_+, \quad 
\xi^+=(\xi_1^+,\dots,\xi_n^+), \quad \xi_j^+=\max\{\xi_j,0\} \quad \xi\in \R^n.
\end{gather*}
We let $\D$ denote the unit disc and $\T$ the unit 
circle in $\C$. 
The Lelong class $\L(\C^n)$ 
is the set of all $u\in \PSH(\C^n)$ such that
for some constant $c_u$ depending on $u$ we have
$$
u(z)\leq c_u+\log^+\|z\|_\infty, \qquad z\in \C^n.
$$  
It is clear that $\log^+\|\cdot\|_\infty$ can be replaced by 
$\log^+\|\cdot\|$ or $\log(1+\|\cdot\|)$ for any norm $\|\cdot\|$ on
$\C^n$.

\begin{definition}\label{def:3.1}
For every compact subset of $\R^n_+$ with $0\in S$ 
we define the \emph{supporting function of $S$} as 
\begin{equation*}
  \varphi(x) = \sup_{s\in S} \langle s,x\rangle, 
  \qquad x\in \R^n,
\end{equation*}
and the \emph{logarithmic supporting function of $S$}
as the function $H_S\colon \C^n\to \R_+$  defined on $\C^{*n}$ by
  \begin{equation*}
    H_S(z)=(\varphi_S\circ \Log)(z)=
\max_{s\in S}\big(s_1\log|z_1|+\cdots+s_n\log|z_n|\big),
\qquad z\in \C^{*n}, 
  \end{equation*}
and  extended  to  $\C^n\setminus \C^{*n}$ by
  \begin{equation*}
    H_S(z)= \varlimsup_{\C^{*n}\ni w\to z}H_S(w),
\qquad z\in \C^{n}\setminus \C^{*n}. 
  \end{equation*} 
The real number 
$\sigma_S=\varphi_S({\mathbf 1})$, where 
${\mathbf 1}=(1,\dots,1)\in \R_+^n$,
is called the {\it logarithmic type} of $H_S$.
\end{definition}

\smallskip
Since $\varphi_S$ is positively homogeneous of degree $1$
and convex, that is
$\varphi_S(t\xi)=t\varphi_S(\xi)$ 
and $\varphi_S(\xi+\eta)\leq \varphi_S(\xi)+\varphi_S(\eta)$
for every $t\in \R_+$  and $\xi,\eta\in \R^n$, we have 
\begin{gather}   \label{eq:3.1}
  H_S(z)=\tfrac{1}{\lambda}
H_S(|z_1|^\lambda,\dots, |z_n|^\lambda), 
\qquad \lambda\in \R^*_+, \  z\in \C^{*n},\\
 \label{eq:3.2}
  H_S(z_1w_1,\dots,z_nw_n)\leq H_S(z)+H_S(w), \qquad z,w\in \C^{*n}.
\end{gather} 
Observe that $\varphi_S(-{\mathbf 1})=0$ and that
for $z\in\C^{*n}, \ \lambda\in \C^*$
\begin{equation}
  \label{eq:3.3}
  H_S(\lambda z)\leq 
H_S(z)+H_S(|\lambda|{\mathbf 1}) 
=
H_S(z)+ \sigma_S\log^+|\lambda|.
\end{equation}
If we write $z=\|z\|_\infty w$ with
$\|w\|_\infty=1$, then this formula implies
$H_S/\sigma_S \in \L(\C^n)$, 
\begin{equation}
  \label{eq:3.4}
  H_S(z)\leq 
\sigma_S  \log^+\|z\|_\infty,
\qquad  z\in   \C^n.
\end{equation}
Directly from the definition we see that
$H_S\in \PSH(\C^{*n})\cap {\mathcal C}(\C^{*n})$. 
Since $\C^n\setminus\C^{*n}$ is pluripolar and $H_S$ 
is locally bounded above we have
$H_S\in \PSH(\C^n)$. 

\begin{proposition}
    \label{prop:3.2} 
Let $S\subset \R_+^n$ be compact convex and with $0\in S$. 
Then for every $z\in \C^{*n}$ and
$w\in \C^n$ we have 
\begin{equation*}
  H_S(z+w)\leq H_S(z)+
\varphi_S(|w_1|/|z_1|,\dots,|w_n|/|z_n|),
\end{equation*}
and in particular,  for every $w\in \overline \D^n$ and $\delta\in
]0,1[$ we have 
\begin{equation*}
  H_S({\mathbf 1}+\delta w)\leq \delta \sigma_S.
\end{equation*}
Furthermore, 
\begin{equation*}
  H_S(z+w)\leq H_S(z)+\\
\varphi_S(\log^+(|w_1|/|z_1|),\dots,\log^+(|w_n|/|z_n|))+(\log 2) \sigma_S.
\end{equation*}
\end{proposition}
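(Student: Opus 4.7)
The strategy is to reduce all three inequalities to two basic properties of $\varphi_S$: it is subadditive and, because $S\subset\R^n_+$, it is coordinate-wise nondecreasing on $\R^n$ (if $\xi\le\eta$ componentwise then $\langle s,\xi\rangle\le\langle s,\eta\rangle$ for every $s\in S$, hence $\varphi_S(\xi)\le\varphi_S(\eta)$). Combined with positive homogeneity and the elementary estimate $|z_j+w_j|\le|z_j|+|w_j|=|z_j|(1+|w_j|/|z_j|)$, everything falls out by applying $\varphi_S$ to the $\Log$-coordinates.

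For the first inequality, I would first assume $z+w\in\C^{*n}$ so that $H_S(z+w)=\varphi_S(\Log(z+w))$. The triangle inequality gives
$$
\log|z_j+w_j|\le\log|z_j|+\log\!\bigl(1+|w_j|/|z_j|\bigr),\qquad j=1,\dots,n,
$$
and $\log(1+t)\le t$ for $t\ge0$ yields the coordinate-wise bound $\Log(z+w)\le\Log(z)+(|w_1|/|z_1|,\dots,|w_n|/|z_n|)$. Applying monotonicity of $\varphi_S$ and then subadditivity produces precisely the claimed estimate. The case where some coordinate of $z+w$ vanishes can be handled by the upper-semicontinuous extension from Definition \ref{def:3.1}: approximate $z$ by $(1+\varepsilon)z$ and pass to the $\varlimsup$, using continuity of the right-hand side in $z$.

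The second inequality is a direct specialization. Setting $z=\mathbf 1$ and applying the first inequality to $\mathbf 1+\delta w$ with $w\in\overline{\D}^n$, $\delta\in\,]0,1[$, gives $H_S(\mathbf 1+\delta w)\le H_S(\mathbf 1)+\varphi_S(\delta|w_1|,\dots,\delta|w_n|)$. Here $\mathbf 1+\delta w\in\C^{*n}$ because $|1+\delta w_j|\ge1-\delta>0$, and $H_S(\mathbf 1)=\varphi_S(0)=0$. By positive homogeneity and monotonicity (using $|w_j|\le1$),
$$
\varphi_S(\delta|w_1|,\dots,\delta|w_n|)=\delta\,\varphi_S(|w_1|,\dots,|w_n|)\le\delta\,\varphi_S(\mathbf 1)=\delta\sigma_S.
$$

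For the third inequality, the only new ingredient is the pointwise bound $\log(1+t)\le\log 2+\log^+ t$ for every $t\ge0$ (check it separately for $t\le1$ and $t\ge1$). This upgrades the step in the first proof to
$$
\log|z_j+w_j|\le\log|z_j|+\log 2+\log^+(|w_j|/|z_j|),
$$
so coordinate-wise $\Log(z+w)\le\Log(z)+(\log 2)\mathbf 1+\bigl(\log^+(|w_j|/|z_j|)\bigr)_j$. Applying monotonicity of $\varphi_S$, then subadditivity, and finally $\varphi_S((\log 2)\mathbf 1)=(\log 2)\sigma_S$ by homogeneity, yields the desired bound. The only real bookkeeping obstacle throughout is the boundary case $z+w\notin\C^{*n}$, but since $z\in\C^{*n}$ and $\varphi_S$ is continuous, each inequality extends to the closure via the upper-semicontinuous definition of $H_S$ off $\C^{*n}$.
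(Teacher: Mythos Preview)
Your proof is essentially identical to the paper's: both exploit $\log|z_j+w_j|\le\log|z_j|+\log(1+|w_j|/|z_j|)$ together with either $\log(1+t)\le t$ or $\log(1+t)\le\log 2+\log^+t$, and your ``monotonicity plus subadditivity of $\varphi_S$'' is just the dual phrasing of the paper's ``pick the $t\in S$ attaining the supremum''. One minor slip in the boundary case: approximating $z$ by $(1+\varepsilon)z$ produces a \emph{particular} sequence $\zeta_\varepsilon\to z+w$, and from the upper-semicontinuous definition alone you only get $\varlimsup_\varepsilon H_S(\zeta_\varepsilon)\le H_S(z+w)$, which is the wrong direction --- the fix is to perturb $w$ (equivalently $z+w$) along a sequence realizing the $\varlimsup$, or, as the paper does, simply invoke the continuity of $H_S$ on all of $\C^n$ from Proposition~\ref{prop:3.4}.
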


\begin{proof}  By plurisubharmonicity of $H_S$  
and 
  convexity of the functions given by $w\mapsto \varphi_S(|w_1|/|z_1|,\ldots,
  |w_n|/|z_n|)$ we may assume that $w,z+w\in
  \C^{*n}$.  For 
some $t\in S$ we have 
\begin{equation*}
  H_S(z+w)=\log(|z_1+w_1|^{t_1}\cdots|z_n+w_n|^{t_n}) 
\leq \scalar t{\Log \, z}+ \sum_{j=1}^n t_j\log
\Big(1+\frac{|w_j|}{|z_j|}\Big).
\end{equation*}
Since $\log(1+x)\leq x$, for $x\geq 0$ 
the first estimate follows. Since    
${\mathbf 1}+\delta w\in \C^{*n}$ for
$w\in \overline \D^n$, $\delta \in ]0,1[$,  
and $H_S({\mathbf 1})=0$, the second estimate  follows.
For the third estimate we use the fact that
$\log(1+x)\leq \log 2+\log^+ x$ for $x\geq 0$.
\end{proof}

\bigskip
The zero set ${\mathcal N}(H_S)$ of $H_S$
can be understood in terms of the zero set of $\varphi_S$
which is a cone.
Since  ${\mathcal N}(H_S)\cap \C^{*n}=\Log^{-1}({\mathcal N}(\varphi_S))$ 
and $\R^n_-\subseteq {\mathcal N}(\varphi_S)$, the
closed unit polydisc 
$\overline\D^n$ is contained in ${\mathcal N}(H_S)$. Furthermore,
${\mathcal N}(H_S)$ is equal to  
$\overline \D^n$  if and only if  
$\R_+S=\R_+^n$.  
We have a complete description of the values of $H_S$  at every point
in $\C^n\setminus \C^{*n}$, the union of the coordinate hyperplanes. 

\begin{proposition} \label{prop:3.3}
 Let $S$ be a compact convex subset of $\R_+^n$ 
with $0\in S$.  For every $a\neq 0$ in 
some coordinate hyperplane we have 
\begin{equation*}
  H_S(a)=H_{S_J}(a_{j_1},\dots,a_{j_\ell}),
\end{equation*}
where $J\subset \{1,\dots,n\}$ consists of the indices $j_1<\cdots<j_\ell$  
of the  non-zero coordinates $a_{j_1},\dots,a_{j_\ell}$ of  $a$
and  $S_J\subseteq \R^\ell$  consists of all $t\in \R^\ell$ such that if
$s\in \R^n_+$ is defined by 
$s_{j_k}=t_k$ for $j_k\in J$ and
$s_j=0$ for $j\not\in J$, then $s\in S$.
\end{proposition}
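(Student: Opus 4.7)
\medbreak\noindent\textbf{Proof plan.}
The plan is to prove the two inequalities $H_S(a)\geq H_{S_J}(a_{j_1},\dots,a_{j_\ell})$ and $H_S(a)\leq H_{S_J}(a_{j_1},\dots,a_{j_\ell})$ separately by working directly with the definition
$$
H_S(a)=\varlimsup_{\C^{*n}\ni w\to a}\max_{s\in S}\sum_{j=1}^n s_j\log|w_j|,
$$
and exploiting the compactness of $S$ together with the basic fact that $H_S\geq 0$ on all of $\C^n$ (which follows from $0\in S$).

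For the lower bound I would fix any $t\in S_J$ with associated $s\in S$ (so $s_{j_k}=t_k$, $s_j=0$ for $j\notin J$) and observe that for every $w\in\C^{*n}$ one has $H_S(w)\geq\sum_k t_k\log|w_{j_k}|$. Letting $w\to a$ along any sequence in $\C^{*n}$, the right-hand side converges to $\sum_k t_k\log|a_{j_k}|$ because each $a_{j_k}\neq 0$, and taking the $\varlimsup$ and the supremum over $t\in S_J$ yields $H_S(a)\geq H_{S_J}(a_{j_1},\dots,a_{j_\ell})$.

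For the upper bound, I would pick a sequence $w^{(\nu)}\in\C^{*n}$ with $w^{(\nu)}\to a$ and $H_S(w^{(\nu)})\to H_S(a)$, and for each $\nu$ choose $s^{(\nu)}\in S$ realizing the maximum defining $H_S(w^{(\nu)})$. By compactness of $S$, pass to a subsequence with $s^{(\nu)}\to s^*\in S$. The key observation is that $s^*_j=0$ for every $j\notin J$. Indeed, for $j\notin J$ we have $|w^{(\nu)}_j|\to 0$, so eventually $|w^{(\nu)}_j|<1$ and hence every summand $s^{(\nu)}_j\log|w^{(\nu)}_j|$ with $j\notin J$ is $\leq 0$; if moreover $s^*_{j_0}>0$ for some $j_0\notin J$ then $s^{(\nu)}_{j_0}\log|w^{(\nu)}_{j_0}|\to -\infty$, forcing $H_S(w^{(\nu)})\to-\infty$, which contradicts $H_S\geq 0$. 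Setting $t^*_k=s^*_{j_k}$, the vanishing $s^*_j=0$ for $j\notin J$ means exactly that $t^*\in S_J$.

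Once this is established, the upper bound follows by dropping the non-positive tail:
$$
H_S(w^{(\nu)})=\sum_{j\in J}s^{(\nu)}_j\log|w^{(\nu)}_j|+\sum_{j\notin J}s^{(\nu)}_j\log|w^{(\nu)}_j|\leq \sum_{j\in J}s^{(\nu)}_j\log|w^{(\nu)}_j|,
$$
for $\nu$ large, and the right-hand side converges to $\sum_k t^*_k\log|a_{j_k}|\leq H_{S_J}(a_{j_1},\dots,a_{j_\ell})$ because $a_{j_k}\neq 0$ and $s^{(\nu)}_j\to s^*_j$. The main obstacle is precisely the mixed limit $s^{(\nu)}_j\log|w^{(\nu)}_j|$ for $j\notin J$, which is a $0\cdot(-\infty)$ indeterminate form; the trick is that one only needs an upper bound, so the sign of $\log|w^{(\nu)}_j|$ makes these terms harmless once $s^*$ has been constrained to lie in the slice via the non-negativity of $H_S$.
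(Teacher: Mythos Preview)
Your proposal is correct and follows essentially the same strategy as the paper: both arguments split into the two inequalities, obtain the lower bound by testing against vectors $s=(t,0)\in S$ coming from $S_J$, and obtain the upper bound by choosing a maximizing sequence $s^{(\nu)}\in S$, using compactness plus the nonnegativity of $H_S$ to force the limit point to lie in the slice $S_J$. If anything, your treatment of the $0\cdot(-\infty)$ issue is slightly more careful than the paper's, which writes an equality $\lim_k\langle s_{j_k},\Log w_{j_k}\rangle=\langle t,\Log a'\rangle$ where only the inequality (obtained, as you do, by discarding the nonpositive tail) is actually justified and needed.
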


  \begin{proof}  
After renumbering the variables  we may assume that
$J=\{1,\dots,\ell\}$     
and  $a_{\ell+1}=\cdots=a_n=0$.  We write $z=(z',z'')\in \C^n$, where 
$z'\in \C^\ell$ and $z''\in \C^{n-\ell}$.  Then $a'\in \C^{*\ell}$, so 
$H_{S_J}$ is continuous at $a'$ and we have 
\begin{align*}
  H_{S_J}(a')&=\lim_{\C^{*\ell}\ni w'\to a'}H_{S_J}(w')
=\lim_{\C^{*m}\ni w'\to a'}\sup_{t\in S_J}\scalar t{\Log\,  w'}\\
&=\lim_{\C^{*m}\ni w'\to a'}
\sup_{s=(t,0)\in S}\scalar s{(\Log\,  w',0,\dots,0)}\nonumber\\
&\leq \varlimsup_{\C^{*n}\ni w\to a}
\sup_{s\in S}\scalar s{\Log\,  w}=H_S(a).
\nonumber
\end{align*}
In order to prove the converse inequality we take a 
convergent
sequence in $\C^{*n}$,
 $w_j=(w_{j,1},\dots,w_{j,n})\to a$ such that
$\lim_{j\to \infty}H_S(w_j)=H_S(a)$.
There exists $s_j\in S$ such that
$H_S(w_j)=\scalar{s_j}{\Log\,  w_j}$ for every $j$. Since $H_S\geq 0$ and
$\log|w_{j,\kappa}|\to -\infty$ as $j\to\infty$ for $\kappa=\ell+1,\dots,n$, 
it follows that $s_{j,\kappa}\to 0$ as $j\to\infty$ for $\kappa=\ell+1,\dots,n$.
By compactness of $S$  there exists a subsequence
$s_{j_k}$ converging to $(t,0)\in S$.  We have $t\in S_J$ and conclude
that
\begin{align*}
    H_S(a)= \lim_{k\to \infty}H_S(w_{j_k})=
\lim_{k\to \infty}\scalar{s_{j_k}}{\Log\, w_{j_k}}=
\scalar t{\Log\,  a'} \leq H_{S_J}(a').
\end{align*}
\end{proof}

\smallskip
With a proof similar to
Rashkovskii  \cite[Proposition 2.2]{Ras:2001}
we are able to show that $H_S\in {\mathcal C}(\C^n)$:

\begin{proposition}\label{prop:3.4}
Let $S$ be a compact convex subset of $\R_+^n$ 
with $0\in S$.  
Then $H_S$ is plurisubharmonic and continuous on $\C^n$. 
\end{proposition}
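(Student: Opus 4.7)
The plan is to leverage what the excerpt has already noted---namely $H_S\in\PSH(\C^n)$, hence $H_S\in\USC(\C^n)$---and reduce the question of continuity to lower semicontinuity at each point $a\in\C^n\setminus\C^{*n}$. The main tool will be Proposition \ref{prop:3.3}, which identifies $H_S(a)$ with the value of a lower-dimensional $H_{S_J}$ at an interior point $a'\in\C^{*\ell}$ where continuity is already manifest (since there $H_{S_J}=\varphi_{S_J}\circ\Log$ is a continuous composition).

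The case $a=0$ is immediate: $0\in S$ gives $H_S\geq 0$, and \eqref{eq:3.4} gives $H_S(z)\leq \sigma_S\log^+\|z\|_\infty\to 0$ as $z\to 0$, so $H_S(z)\to 0=H_S(0)$. Suppose now $a\neq 0$ and let $J=\{j_1<\cdots<j_\ell\}$ collect the indices of the non-zero coordinates of $a$; set $a'=(a_{j_1},\dots,a_{j_\ell})\in\C^{*\ell}$. By Proposition \ref{prop:3.3} we have $H_S(a)=H_{S_J}(a')$. Restricting the supremum defining $H_S$ to those $s\in S$ with $s_j=0$ for $j\notin J$, which by construction correspond bijectively to the elements of $S_J\subseteq\R^\ell$, yields for every $z\in \C^{*n}$ the comparison
\[
H_S(z)\;\geq\;\sup_{t\in S_J}\sum_{k=1}^{\ell}t_k\log|z_{j_k}|\;=\;H_{S_J}(z'),
\]
with $z'=(z_{j_1},\dots,z_{j_\ell})$. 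Since $a'\in\C^{*\ell}$ and $H_{S_J}$ is continuous there, to any $\varepsilon>0$ there corresponds a neighbourhood $U$ of $a$ in $\C^n$ with $H_{S_J}(z')>H_S(a)-\varepsilon$ for every $z\in U\cap \C^{*n}$; for $z\in U\setminus\C^{*n}$ the same lower bound is preserved by the upper-regularisation formula defining $H_S$ off $\C^{*n}$. Hence $\varliminf_{z\to a}H_S(z)\geq H_S(a)$, which together with upper semicontinuity yields continuity at $a$.

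The principal obstacle is ensuring that the three ingredients---the identification $H_S(a)=H_{S_J}(a')$ from Proposition \ref{prop:3.3}, the pointwise inequality $H_S(z)\geq H_{S_J}(z')$ on $\C^{*n}$, and continuity of $H_{S_J}$ at the interior point $a'$---fit together sharply at $a$. Beyond that the argument is essentially bookkeeping on top of \eqref{eq:3.4} and the elementary properties of the support function $\varphi_S$ already recorded before the statement.
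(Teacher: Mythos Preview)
Your proof is correct and follows essentially the same route as the paper: reduce to lower semicontinuity at $a\in\C^n\setminus\C^{*n}$, invoke Proposition~\ref{prop:3.3} to identify $H_S(a)=H_{S_J}(a')$, and then exploit continuity of $H_{S_J}$ at the interior point $a'\in\C^{*\ell}$ together with a comparison $H_S(z)\geq H_{S_J}(z')$.

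The one notable difference is how that comparison is obtained. The paper argues via the maximum principle: since $H_S$ is plurisubharmonic and rotationally invariant in each variable, its value at $(z',0)$ is dominated by its constant value on the distinguished boundary of the polydisc $\{(z',\zeta''):|\zeta''_k|\leq|z''_k|\}$, giving $H_S(z',0)\leq H_S(z)$, and then Proposition~\ref{prop:3.3} converts the left side to $H_{S_J}(z')$. You instead get $H_S(z)\geq H_{S_J}(z')$ on $\C^{*n}$ directly from the definition, by restricting the supremum in $\varphi_S(\Log z)$ to those $s\in S$ with $s_j=0$ for $j\notin J$. Your derivation is more elementary---it bypasses the maximum principle entirely---while the paper's version has the virtue of working for an arbitrary sequence $z_j\to a$ without singling out $\C^{*n}$ and then patching via the upper-regularisation clause. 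Both are perfectly valid; your explicit treatment of $a=0$ via \eqref{eq:3.4} is also a small clarification the paper leaves implicit.
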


\begin{proof}
Let $0 \neq a \in \C^n\setminus \C^{*n}$.
Since $H_S\in \PSH(\C^n)\cap {\mathcal C}(\C^{*n})$,  
it suffices to prove that $H_S$ is lower semicontinuous at $a$.
After renumbering the variables we may assume
$a_{\ell+1}=\dots = a_n = 0$ and $a_j \neq 0$ for $j \leq \ell$.
We also write $z = (z', z'') \in \C^n$, where $z' \in \C^\ell$ and $z'' \in \C^{n - \ell}$.
Let $z_j \in \C^n$ be such that $z_j \rightarrow a$.
Since $H_S$ is rotationally invariant in each variable it takes the
constant value $H_S(z_j)$ on the distinguished boundary 
of the $n-\ell$ dimensional polydisc
$\{(z',\zeta'') \,;\,  |\zeta''_k|\leq |z''_{j,k}|,
k=1,\dots,n-\ell\}$, so by the maximum principle
$H_S(z_j', 0)\leq  H_S(z_j)$.
By Proposition \ref{prop:3.3} we have that $H_S(z_j', 0) = H_{S_J}(z_j')$,
    for $J = \{1, \dots, \ell\}$.
By the continuity of $H_{S_J}$ at $a'$ we have
\begin{equation*}
    \varliminf_{j \rightarrow \infty} H_S(z_j)
    \geq
    \varliminf_{j \rightarrow \infty} H_S(z_j', 0)
    =
    \varliminf_{j \rightarrow \infty} H_{S_J}(z_j')
    =
    H_{S_J}(a')
    =
    H_S(a).
\end{equation*}
This proves the lower semicontinuity of $H_S$ at $a$.
\end{proof}

\begin{definition}\label{def:3.3}
For every compact convex subset  $S$ of $\R^n_+$ with
$0\in S$  we define the {\it $S$-Lelong class} $\L^S(\C^n)$
as the set of all $u\in \PSH(\C^n)$ such that
\begin{equation*}
  u(z)\leq c_u+H_S(z), \qquad z\in \C^n,
\end{equation*}
for some constant $c_u$ depending on $u$,  and define
$\L^S_+(\C^n)$ as the subclass of functions $u$, that have the
same asymptotic behavior at infinity as the function
$H_S$, that is 
\begin{equation}
-c_u+H_S(z) \leq  u(z)\leq c_u+H_S(z), \qquad z\in \C^n.
\end{equation}  
\end{definition}

The Liouville theorem tells us that an entire function 
$f\in \OO(\C^n)$ which satisfies a growth estimate
$|f(z)|\leq C(1+|z|)^{a+m}$, $z\in \C^n$,  
for some $m\in \N$ and $a\in [0,1[$,
is a polynomial of degree $\leq m$, that is $f\in {\mathcal P}_m(\C^n)$.
The following is a Liouville type theorem for the polynomial classes
${\mathcal P}^S_m(\C^n)$:

\begin{theorem}\label{thm:3.6} Let $d_m$ denote
the distance between $mS$ and $\N^n\setminus mS$ in the $L^1$-norm.  
Then for every $f\in \OO(\C^n)$ the following are equivalent:
\begin{enumerate}
\item [{\bf (i)}] $f \in{\mathcal P}^S_m(\C^n)$.
\item [{\bf (ii)}] $\log |f|^{1/m}\in \L^S(\C^n)$.
\item [{\bf (iii)}]
there exists $a\in[0,d_m[$ such that  
$|f|e^{-mH_S-a\log^+\|\cdot\|_\infty}\in L^\infty(\C^n)$.
\item [{\bf (iv)}] there exists $a\in[0,d_m[$ 
and a constant $C>0$ such that
$$
|f(z)|\leq C(1+|z|)^ae^{mH_S(z)}, \qquad z\in \C^n.
$$
\end{enumerate}
\end{theorem}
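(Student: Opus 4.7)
The implications $(i)\Rightarrow(ii)\Rightarrow(iv)\Rightarrow(iii)$ are routine and do not use $d_m$. For $(i)\Rightarrow(ii)$, every monomial $z^\alpha$ with $\alpha\in mS$ satisfies $\log|z^\alpha|=\scalar\alpha{\Log z}\leq m\varphi_S(\Log z)=mH_S(z)$ on $\C^{*n}$, hence $|p|\leq\bigl(\sum_\alpha|a_\alpha|\bigr)e^{mH_S}$, so $\log|p|^{1/m}\in\L^S(\C^n)$. The implication $(ii)\Rightarrow(iv)$ holds with $a=0$, and $(iii)\Leftrightarrow(iv)$ follows from the elementary estimate $\bigl|\log(1+|z|)-\log^+\|z\|_\infty\bigr|=O(1)$.

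The substantive content is $(iii)\Rightarrow(i)$. Expanding $f(z)=\sum_{\alpha\in\N^n}c_\alpha z^\alpha$, one must show $c_\alpha=0$ whenever $\alpha\in\N^n\setminus mS$. For $r_1,\dots,r_n>0$, Cauchy's inequality on the torus $\{|z_j|=r_j\}$ combined with the growth bound from $(iv)$ gives
$$|c_\alpha|\leq r^{-\alpha}\sup_{|z_j|=r_j}|f(z)|\leq C'\bigl(1+\sqrt n\max_j r_j\bigr)^a r^{-\alpha}e^{m\varphi_S(\log r)}.$$
Substituting $r_j=e^{t\xi_j}$ for a fixed $\xi\in\R^n$ with $\|\xi\|_\infty\leq 1$ and using the positive homogeneity of $\varphi_S$, taking logarithms produces, for all $t>0$,
$$\log|c_\alpha|\leq t\bigl(a(\max_j\xi_j)^+-\scalar\alpha\xi+m\varphi_S(\xi)\bigr)+O(1).$$
If $\xi$ can be chosen with $\|\xi\|_\infty\leq 1$ and $\scalar\alpha\xi-m\varphi_S(\xi)>a$, then the coefficient of $t$ is strictly negative, because $(\max_j\xi_j)^+\leq\|\xi\|_\infty\leq 1$, and letting $t\to\infty$ forces $c_\alpha=0$.

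The existence of such a $\xi$ is the crux. The standard $L^1$/$L^\infty$ duality for distance to a closed convex set in $\R^n$, together with the fact that $m\varphi_S$ is the support function of $mS$, yields
$$\inf_{s\in mS}\|\alpha-s\|_1=\sup_{\|\xi\|_\infty\leq 1}\bigl(\scalar\alpha\xi-m\varphi_S(\xi)\bigr).$$
For $\alpha\in\N^n\setminus mS$ the left side is at least $d_m>a$, and the supremum is attained by compactness of the $\|\cdot\|_\infty$-unit ball and continuity of $\varphi_S$, so a valid $\xi$ exists. The main obstacle is conceptual rather than technical: recognising that the $L^1$-distance $d_m$ appearing in the hypothesis is precisely the quantity produced by this dual pairing and that this pairing is exactly what the Cauchy-estimate scaling $r=\Exp(t\xi)$ delivers; once the duality is in place the rest is a bookkeeping computation.
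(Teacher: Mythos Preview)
Your proof is correct and follows essentially the same route as the paper: the easy implications are dispatched exactly as in the paper, and for $(iii)\Rightarrow(i)$ both arguments plug the polyradius $r=\Exp(t\xi)$ into the Cauchy estimate and let $t\to+\infty$ once a separating direction $\xi$ is in hand. The only cosmetic difference is in how that $\xi$ is produced. The paper observes that $m\varphi_S(\xi)+a\|\xi^+\|_\infty=\varphi_{mS+a\Sigma}(\xi)$ and that $a<d_m$ forces $(mS+a\Sigma)\cap\N^n=(mS)\cap\N^n$ (because $\Sigma$ sits inside the $L^1$ unit ball), so any $\alpha\in\N^n\setminus mS$ lies outside $mS+a\Sigma$ and a separating hyperplane exists by Hahn--Banach. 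You instead invoke the $L^1/L^\infty$ duality $\inf_{s\in mS}\|\alpha-s\|_1=\sup_{\|\xi\|_\infty\leq1}(\scalar\alpha\xi-m\varphi_S(\xi))$ directly. Since $(\max_j\xi_j)^+=\|\xi^+\|_\infty\leq\|\xi\|_\infty$, your normalisation $\|\xi\|_\infty\leq1$ yields precisely the inequality $\scalar\alpha\xi>m\varphi_S(\xi)+a\|\xi^+\|_\infty$ that the paper obtains, so the two arguments coincide from that point on.
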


\begin{proof} 
{\bf (i)}$\Rightarrow${\bf (ii):}
If $\alpha\in mS$, then 
$|z^\alpha|\leq e^{mH_S(z)}$, so for  
$f \in{\mathcal P}^S_m(\C^n)$, 
$f(z)=\sum_{\alpha\in (mS)\cap
  \N^n}a_\alpha z^\alpha$, we have
$\log |f(z)|^{1/m}\leq c_f/m+ H_S(z)$ with 
$c_f=\log \sum_{\alpha\in (mS)\cap \N^n}|a_\alpha|$. 

\smallskip\noindent
{\bf (ii)}$\Rightarrow${\bf (iii):} We have 
$|f|e^{-mH_S-a\log^+{\|\cdot\|_\infty}}\leq |f|e^{-mH_S}\in L^\infty(\C^n)$. 

\smallskip\noindent
{\bf (iii)}$\Rightarrow${\bf (i):} 
Observe first that $m\varphi_S(\xi)+a\|\xi^+\|_\infty
=\varphi_{mS+a\Sigma}(\xi)$ for every $\xi\in \R^n$. 
Let $f(z)=\sum_{\alpha\in \N^n} a_\alpha z^\alpha$ be
the power series expansion of $f$ at $0$.
We need to show that $a_\alpha=0$ for all $\alpha\in \N^n\setminus mS$. 
Since $a<d_m$ we have $(mS+a\Sigma)\cap \N^n=(mS)\cap \N^n$, 
so $\alpha\in \N^n\setminus (mS+a\Sigma)$.
Hence, there exists $\xi \in \R^n$ such that 
$\scalar \alpha\xi >\varphi_{mS+a\Sigma}(\xi)$.  
We let $C_t$ denote the polycircle with center $0$ and
polyradius $(e^{t\xi_1},\dots,e^{t\xi_n})$ and observe that by
the Cauchy formula for derivatives we have
\begin{equation*}
a_\alpha=\dfrac 1{(2\pi i)^n} \int_{C_t}
\dfrac{f(\zeta)}{\zeta^\alpha} \, 
\dfrac{d\zeta_1\cdots d\zeta_n}{\zeta_1\cdots \zeta_n}. 
\end{equation*}
For $\zeta=(e^{t\xi_1+i\theta_1},\dots,e^{t\xi_n+i\theta_n})$ 
on $C_t$  we have $$|f(\zeta)|/|\zeta^\alpha|\leq
Ce^{-t(\scalar \alpha\xi-\varphi_{mS+a\Sigma}(\xi))},$$ so the right-hand side
tends to $0$ as $t\to +\infty$, and we conclude that  $a_\alpha=0$.

\smallskip\noindent
{\bf (iii)}$\Leftrightarrow${\bf (iv):} 
Follows from the equivalence of the euclidean norm
$|\cdot|$ and $\|\cdot\|_\infty$.
\end{proof}

\medskip
Recall from Klimek \cite[p.~87]{Kli:1991} that a real valued 
$u\in \PSH(X)$ on an open subset $X$ of $\C^n$ is said to be
{\it maximal}, if for every relatively compact open subset $G$ 
of $X$ and every  $v\in \USC(\overline G)\cap \PSH(G)$ satisfying
$v\leq u$ on $\partial G$ we have $v\leq u$ on $G$.
For the reader's convenience we prove the following well known result.

\begin{lemma}\label{lem:3.6}
Let $X$ be an open subset of $\C^n$ and $u\in \PSH(X)$ be real valued.
Assume that for every relatively compact open subset $G$ of $X$ 
there exists a family $(g_z)_{z\in G}$ of holomorphic maps $g_z\colon
D_z\to \C^n$ defined on open subsets $D_z$ of $\C$, such that
$K_z=g_z^{-1}(\overline G)$ is compact,  $z=g_z(\tau_z)$ for some 
$\tau_z\in K_z$, and $u\circ g_z$ is harmonic on $D_z$.  
Then $u$ is maximal on $X$.    
\end{lemma}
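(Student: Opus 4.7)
The plan is to show directly that for any relatively compact open $G\subset X$ and any candidate competitor $v\in \USC(\overline G)\cap \PSH(G)$ with $v\leq u$ on $\partial G$, the inequality $v\leq u$ holds on $G$. Fix an arbitrary point $z\in G$. I will use the hypothesized holomorphic map $g_z\colon D_z\to \C^n$, together with the fact that $u\circ g_z$ is harmonic (hence real-valued and continuous) on $D_z$, to transport the problem from $\C^n$ to the one-dimensional disc $D_z\subseteq \C$, where the classical maximum principle for subharmonic functions of one complex variable applies.

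More concretely, I would introduce the open set $U_z=g_z^{-1}(G)\subseteq D_z$ and consider the function $\varphi=v\circ g_z-u\circ g_z$ on $U_z$. Since $v\in \PSH(G)$ and $g_z$ is holomorphic, $v\circ g_z$ is subharmonic on $U_z$; since $u\circ g_z$ is harmonic on $D_z$ (in particular on $U_z$), $\varphi$ is subharmonic on $U_z$. The point $\tau_z$ lies in $U_z$ because $g_z(\tau_z)=z\in G$, so $U_z$ is nonempty. The compactness of $K_z=g_z^{-1}(\overline G)$ gives $\overline{U_z}\subseteq K_z$, so $U_z$ is bounded in $\C$ and relatively compact in $D_z$; consequently $v\circ g_z$ and hence $\varphi$ extend as upper semicontinuous functions to $\overline{U_z}\subseteq K_z$.

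The main step is identifying the boundary values. If $\zeta\in \partial U_z$, then $\zeta\in \overline{U_z}\subseteq K_z$, so $g_z(\zeta)\in \overline G$; on the other hand $\zeta\notin U_z$ forces $g_z(\zeta)\notin G$, so $g_z(\zeta)\in\partial G$. The hypothesis $v\leq u$ on $\partial G$ then yields $\varphi\leq 0$ on $\partial U_z$. The classical maximum principle for subharmonic functions on a bounded open subset of $\C$, applied on each connected component of $U_z$, yields $\varphi\leq 0$ throughout $U_z$. Evaluating at $\tau_z$ gives $v(z)\leq u(z)$, and since $z\in G$ was arbitrary, this establishes maximality.

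The only point requiring genuine care is the boundary identification $g_z(\partial U_z)\subseteq \partial G$, which depends on $g_z$ being continuous and on $K_z$ being compact (so that $\overline{U_z}$ stays inside $D_z$); without the compactness hypothesis on $K_z$ one could not apply the one-variable maximum principle on $U_z$. The remaining ingredients, subharmonicity of the composition and continuity of harmonic functions, are standard and require no further argument.
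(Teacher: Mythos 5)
Your proof is correct and follows essentially the same route as the paper's: pull the competitor back along $g_z$, observe that the difference with the harmonic function $u\circ g_z$ is subharmonic, and apply the one-variable maximum principle on the compact preimage of $\overline G$. The only (harmless) differences are that you work directly with $v\in\USC(\overline G)\cap\PSH(G)$ on $U_z=g_z^{-1}(G)$ instead of first reducing to $v\in\PSH(X)$ and arguing on $K_z$, and you spell out the boundary identification $g_z(\partial U_z)\subseteq\partial G$ more explicitly than the paper does.
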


\begin{proof} By Klimek \cite[Proposition 3.1.1]{Kli:1991}
we may  take $v\in\PSH(X)$ in the definition of maximality. 
We assume that $v\leq u$ on $\partial G$ and need to
prove that $v(z)\leq u(z)$ for every $z\in G$.
The set $K_z=g^{-1}_z(\overline G)$ is compact, the
function $s_z=v\circ g_z\in \SH(D_z)$ is less than or equal
to $h_z=u\circ g_z\in \H(D_z)$ on the boundary of $K_z$.
By the maximum principle for harmonic functions
$v(z)=s_z(\tau_z)\leq h_z(\tau_z)=u(z)$. 
\end{proof}

\medskip
For every  $z\in \C^{n*}$  we define a
parametric curve $f_z\colon \C\to \C^n$,
$f_z=(f_{z,1},\dots,f_{z,n})$, by
\begin{equation}
  \label{eq:3.6}
  f_{z,j}(\tau)=
  \begin{cases}
e^{-i\tau\log|z_j|}(z_j/|z_j|), &z_j\neq 0,\\
0, &z_j=0,     
  \end{cases}
 \qquad \tau\in \C.
\end{equation} 
We have $f_z(i)=z$ and $\|f_z(\tau)\|_\infty=1$ for every $\tau\in \R$.
If  $\|z\|_\infty>1$,  then for $j$ with $|z_j|=\|z\|_\infty$ we have
\begin{equation}
  \label{eq:3.7}
  f_{z,j}'(\tau)=-ie^{-i\tau\log|z_j|}(\log|z_j|)(z_j/|z_j|)\neq 0.
\qquad \tau\in \C.
\end{equation} 
Hence  $f_z$ parametrizes an open Riemann surface
in $\C^n$ through the point $z$.
Furthermore,  we have $H_S(f_z(\tau))=\Im\tau\, H_S(z)$ for 
$\Im \tau\geq 0$.  The function 
$\C\ni \tau \mapsto H_S(f_z(\tau))$ is subharmonic, 
harmonic in the upper half plane, equal to  $0$ 
on the real axis,  and takes the value $H_S(z)$ at $i$.    
By Lemma \ref{lem:3.6} 
with $f_z$ in the role of $g_z$ and $H_S$ in the role of $u$ 
we get:

\begin{proposition}\label{prop:3.7}  
Let $S$ be  a compact convex subset of $\R_+^n$ with $0\in S$.
Then $H_S$ is maximal on $\C^n\setminus \partial {\mathcal N}(H_S)$, 
where ${\mathcal N}(H_S)$ is the zero set of $H_S$. 
\end{proposition}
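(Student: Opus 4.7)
The natural plan is to decompose the open set $\C^n\setminus\partial\mathcal{N}(H_S)$ into two disjoint open pieces and to verify maximality on each separately, since maximality is a local property.

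The easy piece is the interior $U_0 = \operatorname{int}(\mathcal{N}(H_S))$. There $H_S\equiv 0$ by definition of the zero set, so for any relatively compact open $G\subset U_0$ and any $v\in\USC(\overline G)\cap\PSH(G)$ with $v\le H_S=0$ on $\partial G$, the ordinary maximum principle for plurisubharmonic functions gives $v\le 0=H_S$ on $G$.

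The substantive piece is the open set $U_+=\C^n\setminus\mathcal{N}(H_S)=\{z\in\C^n:H_S(z)>0\}$. Here the plan is to apply Lemma \ref{lem:3.6} with $u=H_S$ and with the parametric curves $f_z$ from (\ref{eq:3.6}) serving as the holomorphic maps $g_z$. For each $z\in U_+$ one has $H_S(z)>0$, which forces $\|z\|_\infty>1$ since $\overline\D^n\subseteq\mathcal{N}(H_S)$. Pick $D_z$ to be the upper half-plane $\{\tau\in\C:\Im\tau>0\}$ (open in $\C$) and set $\tau_z=i$, so that $g_z(\tau_z)=f_z(i)=z$. By the identity $\Log f_z(\tau)=\Im\tau\cdot\Log z$ established in the preamble, together with positive homogeneity of $\varphi_S$, one obtains $H_S\circ f_z(\tau)=\Im\tau\cdot H_S(z)$ on $D_z$, which is harmonic as a linear function of $\Im\tau$.

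What is left is to verify that $K_z=f_z^{-1}(\overline G)$ is a compact subset of $D_z$ for every relatively compact open $G\subset U_+$. Since $H_S$ is continuous and $\overline G\subset U_+$, the constant $c_G=\min_{\overline G}H_S$ is strictly positive, and then $H_S(f_z(\tau))=\Im\tau\cdot H_S(z)\ge c_G$ forces $\Im\tau\ge c_G/H_S(z)>0$, keeping $K_z$ off the boundary $\{\Im\tau=0\}$ of $D_z$. An upper bound on $\Im\tau$ comes from boundedness of $\overline G$: choosing $j$ with $|z_j|=\|z\|_\infty>1$, the inequality $|f_{z,j}(\tau)|=|z_j|^{\Im\tau}\le R_G$ bounds $\Im\tau$ from above. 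I expect the main obstacle to be verifying the boundedness in the $\Re\tau$ direction, since $|f_z(\tau)|$ depends only on $\Im\tau$ and $\Re\tau$ merely rotates each coordinate; handling this cleanly probably requires either replacing $D_z$ by a suitable horizontally bounded subdomain of the upper half-plane on whose boundary $f_z$ already leaves $\overline G$, or using that $H_S$ is $\T^n$-invariant to average $v\circ f_z$ over $\Re\tau$ and reduce the application of the maximum principle to a subharmonic function of $\Im\tau$ alone. Once the compactness step is in place, Lemma \ref{lem:3.6} yields maximality of $H_S$ on $U_+$, and combining with the easy case completes the proof.
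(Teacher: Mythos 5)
Your route is the same as the paper's: the interior of ${\mathcal N}(H_S)$ is trivial because $H_S$ vanishes there, and on $\{H_S>0\}$ one tries to feed the curves $f_z$, restricted to the upper half--plane, into Lemma \ref{lem:3.6}. (Your splitting is legitimate without any general ``locality of maximality'': every connected component of a relatively compact open $G\subset\C^n\setminus\partial{\mathcal N}(H_S)$ lies entirely in one of the two disjoint open pieces, and it suffices to test maximality on connected $G$.) The problem is that the step you yourself flag as the main obstacle is a genuine gap, and neither of your two suggested repairs closes it. The set $K_z=f_z^{-1}(\overline G)$ really is non-compact in general: for fixed $\Im\tau$ the map $\Re\tau\mapsto f_z(\tau)$ is a one-parameter rotation of a torus, which is recurrent, so $\{\Re\tau\,;\,f_z(\tau)\in\overline G\}$ is typically unbounded. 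Already for $n=1$ and $z=2$ one has $f_z(s+i)=2$ for every $s\in(2\pi/\log 2)\Z$, so $f_z^{-1}(\overline G)$ is unbounded for any neighbourhood $G$ of $z$. Your first repair (a horizontally bounded subdomain whose boundary $f_z$ already maps outside $\overline G$) is impossible in general: for $G=\{w\in\C\,;\,3/2<|w|<3\}$ the set $f_2^{-1}(\overline G)$ is a full horizontal strip, so every vertical line meets it. Your second repair (averaging $v\circ f_z$ over $\Re\tau$) is not even well defined, because $v$ lives only on $\overline G$, which need not be invariant under the rotation flow. The same objection applies to the paper's own one-sentence deduction of the proposition from Lemma \ref{lem:3.6}: the hypothesis that $g_z^{-1}(\overline G)$ be compact is simply not satisfied by $f_z$.

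The argument can be completed, but by a Phragm\'en--Lindel\"of argument in the spirit of the proof of Proposition \ref{prop:4.3} rather than by Lemma \ref{lem:3.6}. Fix a connected relatively compact open $G\subset\{H_S>0\}$, a function $v\in \USC(\overline G)\cap\PSH(G)$ with $v\leq H_S$ on $\partial G$, and $z\in G$. Put $\Omega=f_z^{-1}(G)\cap\C_+$. Since $H_S\geq c>0$ on $\overline G$ and $H_S(f_z(\tau))=\Im\tau\, H_S(z)$ for $\Im\tau\geq 0$, while $\overline G$ is bounded and $\|f_z(\tau)\|_\infty=\|z\|_\infty^{\Im\tau}$ with $\|z\|_\infty>1$, one gets $\Omega\subset\{a\leq\Im\tau\leq b\}$ for some $0<a<b$. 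On $\Omega$ the function $s_z-h_z$, where $s_z=v\circ f_z$ and $h_z(\tau)=\Im\tau\, H_S(z)$, is subharmonic and bounded above, and it has $\varlimsup\leq 0$ at every point of $\partial\Omega$, since such points are mapped by $f_z$ into $\partial G$ where $v\leq H_S$. Hence $W=\max\{s_z-h_z,0\}$ on $\Omega$, extended by $0$ to the rest of $\C_+$, is a bounded non-negative subharmonic function on $\C_+$ vanishing near $\R$, and Lemma \ref{lem:4.2} with $A=0$ gives $W\leq 0$. Evaluating at $\tau=i$ yields $v(z)\leq H_S(z)$, which is the required maximality on $\{H_S>0\}$. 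With this replacement your outline becomes a complete proof; as written, the compactness step fails and the proof is incomplete.
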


\section{Weighted Siciak-Zakharyuta functions}
\label{sec:04}

\begin{definition}\label{def:4.1}
Let $S\subset \R^n_+$ be a compact convex set,  $0\in S$,
and   $q\colon E\to \R\cup\{+\infty\}$ be a function on 
$E\subset \C^n$. The {\it Siciak-Zakharyuta function 
with respect to $S$, $E$,  and
  $q$}  is defined by
\begin{equation*}
  V^S_{E,q}(z)=\sup\{u(z) \,;\, u \in \L^S(\C^n),\; u|_E\leq q\}, 
\qquad z\in \C^n.
\end{equation*}
We drop $S$ in superscripts if $S=\Sigma$ and $q$ in subscripts if $q=0$.   
\end{definition}
 
From Theorem \ref{thm:3.6}
it follows that $\log\Phi^S_{E,q}\leq V^S_{E,q}$ for every 
compact convex $S\subset \R^n_+$ and every  $q\colon E\to
\R\cup\{+\infty\}$ on $E\subset \C^n$. 
We will need a variant of the  Phragmén-Lindelöf principle,
see  \cite[Lemma 2.1]{HorSig:1998}.

\begin{lemma}  \label{lem:4.2}
Let $v$ be subharmonic in the upper half
  plane $\C_+=\{z\in \C\,;\, \Im z>0\}$
such that for some real constants $C$ and $A$ we have
$v(z)\leq C+A|z|$ for all $z\in \C_+$, 
and  $\varlimsup\limits_{\C_+\ni z\to x}v(z) \leq 0$ for all $x\in \R$.
Then $v(z)\leq A\, \Im z$ for all $z\in \C_+$. 
\end{lemma}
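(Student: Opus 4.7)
Plan. The statement is a Phragm\'en--Lindel\"of principle at critical (linear) growth in the upper half-plane; I would prove it by reducing to the classical case of bounded subharmonic functions. Fix $\varepsilon > 0$ and set
\[
  U_\varepsilon(z) = v(z) - (A + \varepsilon)\,\Im z.
\]
Then $U_\varepsilon$ is subharmonic on $\C_+$ and still satisfies $\varlimsup_{\C_+\ni z \to x} U_\varepsilon(z) \leq 0$ for every $x \in \R$. The main claim is that $U_\varepsilon$ is bounded above on $\C_+$. Once this is established, the classical Phragm\'en--Lindel\"of principle for bounded subharmonic functions on the half-plane---obtained by conformally mapping $\C_+$ to the unit disk via the Cayley transform $z\mapsto (z-i)/(z+i)$ and using that the point at infinity corresponds to a single boundary point of harmonic measure zero---gives $U_\varepsilon \leq 0$ on $\C_+$, so $v(z) \leq (A+\varepsilon)\,\Im z$, and letting $\varepsilon\to 0^+$ yields the inequality $v(z)\leq A\,\Im z$.

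To prove the boundedness claim I would choose $\theta_\varepsilon\in(0,\pi/2)$ with $\sin\theta_\varepsilon = A/(A+\varepsilon)$ and split $\C_+$ into the central sector $\Omega_{\mathrm c} = \{\theta_\varepsilon < \arg z < \pi-\theta_\varepsilon\}$ and the two side sectors $\Omega_\pm$ of opening $\theta_\varepsilon$. Writing $z = re^{i\theta}$, the estimate
\[
  U_\varepsilon(z) \leq C + r\bigl[A(1-\sin\theta) - \varepsilon\sin\theta\bigr]
\]
tends to $-\infty$ on $\Omega_{\mathrm c}$, where the bracket is bounded strictly above by a negative constant, so $U_\varepsilon$ is bounded above there. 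Each side sector has opening strictly less than $\pi/2$, so the classical Phragm\'en--Lindel\"of principle for a sector at sub-critical growth (linear growth, compared to the critical exponent $\pi/\theta_\varepsilon > 2$) applies and bounds $U_\varepsilon$ by the supremum of its boundary values. Those boundary values are $\leq 0$ on the real half-axis by hypothesis and $\leq C$ on the ray $\arg z = \theta_\varepsilon$, because the choice $(A+\varepsilon)r\sin\theta_\varepsilon = Ar$ exactly cancels the $Ar$ piece of the growth bound. Hence $U_\varepsilon \leq \max(0,C)$ on each side sector, and $U_\varepsilon$ is bounded above on all of $\C_+$.

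The main obstacle is the correct choice of auxiliary function. A naive attempt with $U(z) = v(z) - A\,\Im z$ fails, since the residual bound $U(z)\leq C + A(|z|-\Im z) \leq C + A|\Re z|$ allows linear horizontal growth and no decomposition into sectors of sub-critical growth is possible. Subtracting the additional $\varepsilon\,\Im z$ simultaneously creates a central sector on which $U_\varepsilon$ decays to $-\infty$ and trims each side sector to opening strictly below $\pi/2$, which is precisely what makes the classical sector Phragm\'en--Lindel\"of theorem applicable; the final squeeze $\varepsilon\to 0^+$ then removes the dependence on the constant $C$ automatically.
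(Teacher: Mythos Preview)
The paper does not prove this lemma; it merely cites H\"ormander--Sigurdsson \cite{HorSig:1998}, Lemma~2.1. Your argument is the standard Phragm\'en--Lindel\"of reduction and is essentially correct, but two small points deserve attention.

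First, your sector decomposition tacitly assumes $A>0$: otherwise $A/(A+\varepsilon)\notin(0,1)$ and $\theta_\varepsilon$ is ill-defined. The case $A\le 0$ is easier and should be disposed of first. The function $w(z)=v(z)-A\,\Im z$ then satisfies $w(z)\le C+A(|z|-\Im z)\le C$, since $A\le 0$ and $|z|\ge \Im z$; thus $w$ is bounded above with $\varlimsup_{z\to x}w(z)\le 0$ on $\R$, and the half-plane maximum principle for bounded subharmonic functions (which you invoke anyway) yields $w\le 0$ directly.

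Second, your claim that on the central sector $\Omega_{\mathrm c}$ the bracket $A(1-\sin\theta)-\varepsilon\sin\theta=A-(A+\varepsilon)\sin\theta$ is ``bounded strictly above by a negative constant'' is not quite right: as $\theta\to\theta_\varepsilon^+$ the bracket tends to $0$. What you actually have, and what suffices, is that the bracket is $\le 0$ on $\overline{\Omega_{\mathrm c}}$, hence $U_\varepsilon\le C$ there. Combined with the bound $U_\varepsilon\le\max(0,C)$ on the two side sectors from the sector Phragm\'en--Lindel\"of theorem (opening $\theta_\varepsilon<\pi/2$, linear growth well below the critical exponent $\pi/\theta_\varepsilon>2$), this gives $U_\varepsilon\le\max(0,C)$ on all of $\C_+$, and the final passage through the Cayley transform and $\varepsilon\to 0^+$ is correct.
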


By Klimek \cite[Example 5.1.1]{Kli:1991}
$V_K(z)=\log^+(\|z-a\|/r)$ if $\|\cdot\|$ is any norm and
$K=\{z\in \C^n\,;\, \|z-a\|\leq r\}$, $r>0$, is the closed 
ball in this norm with center $a$ and radius $r$.
The polynomial classes ${\mathcal P}^S_m(\C^n)$ are in general not
translation invariant,  so we can not expect to have a generalization
of this example.  
The following is proved in special cases by Bos and Levenberg 
\cite{BosLev:2018}:

\begin{proposition}\label{prop:4.3}
Let $S$ be a compact convex subset of $\R^n_+$ with $0\in S$ and let
$E$ be a subset of $\C^n$ such that $\T^n \subseteq
E\subseteq {\mathcal N}(H_S)$.  Then $V^S_E=H_S$.
\end{proposition}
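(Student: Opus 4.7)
The plan is to establish the two inequalities $H_S\leq V^S_E$ and $V^S_E\leq H_S$ separately.

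The lower bound is immediate. Since $0\in S$ we have $H_S\geq 0$ on $\C^n$, and since $E\subseteq \mathcal{N}(H_S)$ the restriction $H_S|_E$ vanishes identically. Combined with $H_S\in \L^S(\C^n)$ (with constant $c_{H_S}=0$), the function $H_S$ is an admissible competitor in the supremum defining $V^S_E$, giving $H_S\leq V^S_E$.

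For the upper bound, I would fix $u\in \L^S(\C^n)$ with $u|_E\leq 0$ and show $u\leq H_S$ pointwise on $\C^n$. The main step is the case $z\in \C^{*n}$, which I would handle by a one-variable Phragm\'en--Lindel\"of argument along the holomorphic curve $f_z\colon \C\to \C^n$ from~$(\ref{eq:3.6})$. Recall that $f_z(i)=z$, that $f_z(\R)\subseteq \T^n\subseteq E$, and that from $\Log f_z(\tau)=(\Im\tau)\Log z$ and positive homogeneity of $\varphi_S$ one gets the crucial identity $H_S(f_z(\tau))=(\Im\tau) H_S(z)$ for $\Im\tau\geq 0$. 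Setting $v(\tau)=u(f_z(\tau))$, I would verify that $v\in\SH(\C_+)$; that $v(\tau)\leq c_u+H_S(f_z(\tau))\leq c_u+H_S(z)|\tau|$ on $\C_+$, from the defining $\L^S$-estimate for $u$; and that $\varlimsup_{\tau\to x}v(\tau)\leq u(f_z(x))\leq 0$ for $x\in \R$, by upper semicontinuity of $u$ and $f_z(x)\in E$. Lemma~\ref{lem:4.2} then yields $v(\tau)\leq H_S(z)\,\Im\tau$ on $\C_+$, and evaluating at $\tau=i$ gives $u(z)\leq H_S(z)$.

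For the remaining points $z\in \C^n\setminus \C^{*n}$, I would invoke the submean-value inequality for the PSH function $u$ on a small ball $B(z,r)$: since $\C^n\setminus \C^{*n}$ has Lebesgue measure zero and $u\leq H_S$ on $\C^{*n}$, one obtains
\begin{equation*}
u(z)\leq \frac{1}{\lambda(B(z,r))}\int_{B(z,r)} u\, d\lambda \leq \frac{1}{\lambda(B(z,r))}\int_{B(z,r)} H_S\, d\lambda,
\end{equation*}
and the right-hand side converges to $H_S(z)$ as $r\to 0^+$ by continuity of $H_S$ (Proposition~\ref{prop:3.4}).

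The main obstacle is the verification of the linear-growth hypothesis in Phragm\'en--Lindel\"of along $f_z$, which rests entirely on the identity $H_S(f_z(\tau))=(\Im\tau)H_S(z)$ on the closed upper half-plane; once this identity and the $\L^S$-estimate are combined, the boundary condition at $\R$ is immediate from $\T^n\subseteq E$, and the passage to the coordinate hyperplanes is then routine.
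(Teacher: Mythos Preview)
Your proof is correct and follows essentially the same approach as the paper: the lower bound is immediate, and the upper bound on $\C^{*n}$ is obtained by applying the Phragm\'en--Lindel\"of Lemma~\ref{lem:4.2} to $v=u\circ f_z$ along the curve $f_z$ of~\eqref{eq:3.6}. Your explicit passage to points of $\C^n\setminus\C^{*n}$ via the submean-value inequality and the continuity of $H_S$ (Proposition~\ref{prop:3.4}) fills in a detail that the paper leaves implicit.
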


\begin{proof}  Since $H_S\in \L^S(\C^n)$ and $H_S|_E=0$ we have
  $H_S\leq V^S_E$, so it is sufficient to prove that if
$u\in \L^S(\C^n)$ with $u|_E\leq 0$ we have $u(z)\leq H_S(z)$ for every
$z\in \C^{*n}$ such that
$H_S(z)>0$.  Define $f_z$ by
(\ref{eq:3.6}) and  $v\in \SH(\C)$ by 
$v(\tau)=u(f_z(\tau))$, $\tau\in \C$.  
Since $u\in \L^S(\C^n)$ we have
$v(\tau)=u(f_z(\tau))\leq c_u+H_S(f_z(\tau))=c_u+\Im \tau\, H_S(z)$,
$\Im \tau\geq 0$,
and since $f_z(\R)\subseteq \overline \D^n$ we have
$v\leq 0$ on $\R$. Lemma \ref{lem:4.2} gives 
$u(z)=v(i)\leq H_S(z)$. 
\end{proof}

\medskip
Since $\T^n\subset {\mathcal N}(H_S)$ for every $S$ the maximum
principle implies that 
$\overline \D^n\subset {\mathcal N}(H_S)$, where $\D$ denotes the 
unit disc in $\C$.

\begin{definition}\label{def:4.4}  
Let $0\in S\subset \R^n_+$ be compact and convex and   
 $q\colon E\to \R\cup\{+\infty\}$ be a function on $E
 \subset \C^n$.  We say that $q$  is an 
{\it admissible weight  with respect to $S$
on $E$} if 
\begin{enumerate}
\item[{\bf(i)}]  $q$ is lower semicontinuous,   
\item[{\bf(ii)}] the set $\{z\in E \,;\, q(z)<+\infty\}$ is non-pluripolar,
  and 
\item[{\bf(iii)}] if $E$ is unbounded, then  
$\lim\limits_{E\ni z, |z|\to \infty}(H_S(z)-q(z))=-\infty$.
\end{enumerate}  
\end{definition}

This definition is taken from 
Bloom \cite[Appendix~B: Definition 2.1]{SaffTotik:1997}.
Some authors use the term {\it admissible external field} for $q$ 
rather than {\it weight} in this situation and then they
refer to $e^{-q}$ as a weight. 
Observe that if $q$ is an admissible weight, then $E$ is
non-pluripolar and that if $E$ is unbounded then $q=0$ is not 
an admissible weight.

\begin{proposition}
  \label{prop:4.5}  
Let $S$ be a compact convex subset of $\R^n_+$ with $0\in S$ and 
$q$ be an admissible weight with respect to $S$ on a compact subset 
$K$ of $\C^n$.  Then $V^{S*}_{K,q}\in \L^S_+(\C^n)$.
\end{proposition}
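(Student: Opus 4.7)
The conclusion $V^{S*}_{K,q}\in\L^S_+(\C^n)$ consists of two parts: the upper bound $V^{S*}_{K,q}\leq c+H_S$ (which is membership in $\L^S(\C^n)$) and the lower bound $V^{S*}_{K,q}\geq H_S-c$. The lower bound is immediate: by Proposition \ref{prop:3.4}, $H_S$ is continuous, so $H^*:=\sup_K H_S$ is finite; since $q$ is lower semi-continuous on the compact set $K$ with values in $\R\cup\{+\infty\}$ and, by admissibility (ii), is finite at some point of $K$, it attains a finite infimum $q_*:=\inf_K q\in\R$. Setting $c:=H^*-q_*$, the function $H_S-c\in\L^S(\C^n)$ satisfies $H_S-c\leq q_*\leq q$ on $K$, whence $V^{S*}_{K,q}\geq V^S_{K,q}\geq H_S-c$.

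The heart of the proof is the upper bound, for which my plan is to reduce to the case where the weight vanishes on a compact non-pluripolar set. By admissibility (ii) the set $\{z\in K:q(z)<+\infty\}$ is non-pluripolar; writing it as $\bigcup_{\ell\in\N}F_\ell$ with $F_\ell:=\{z\in K:q(z)\leq\ell\}$, each $F_\ell$ is compact (since $q$ is lower semi-continuous and $K$ is compact), and by countable subadditivity of pluripolar sets some $F:=F_{\ell_0}$ is non-pluripolar. For every $u\in\L^S(\C^n)$ with $u|_K\leq q$, the shift $v:=u-\ell_0\in\L^S(\C^n)$ satisfies $v|_F\leq 0$. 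The upper bound thus reduces to the following key claim: for every compact non-pluripolar $F\subset\C^n$ there exists a constant $C_F$ such that every $v\in\L^S(\C^n)$ with $v|_F\leq 0$ satisfies $v\leq H_S+C_F$ on $\C^n$. Granting it, $u\leq H_S+\ell_0+C_F$ throughout the defining family, and taking the supremum and upper regularization yields $V^{S*}_{K,q}\leq H_S+\ell_0+C_F$; as a by-product, the uniform local bound above lets Choquet's lemma make $V^{S*}_{K,q}$ plurisubharmonic.

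The main obstacle is the key claim, and I plan to prove it by combining the classical $\L$-bound on the distinguished torus $\T^n$ with a Phragmén--Lindelöf argument along the parametric Riemann surfaces of (\ref{eq:3.6}), in the spirit of Proposition \ref{prop:4.3}. Assuming $\sigma_S>0$ (the degenerate case $S=\{0\}$ is trivial, as $\L^S(\C^n)$ then reduces to constants), note that since $F$ is compact and non-pluripolar, the classical Siciak--Zakharyuta function $V^*_F$ of $F$ belongs to $\L(\C^n)$ and is therefore bounded on the compact set $\T^n$; set $M:=\sup_{\T^n}V^*_F<+\infty$. For $v\in\L^S(\C^n)$ with $v|_F\leq 0$, dividing by $\sigma_S$ gives $v/\sigma_S\in\L(\C^n)$ with $(v/\sigma_S)|_F\leq 0$, so the classical extremal property yields $v/\sigma_S\leq V^*_F$, and in particular $v\leq\sigma_S M$ on $\T^n$. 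Now fix $z\in\C^{*n}$ and pull back along $f_z\colon\C\to\C^n$ from (\ref{eq:3.6}): $v\circ f_z$ is subharmonic on $\C$, is bounded above by $c_v+(\Im\tau)H_S(z)$ in the upper half-plane (because $H_S(f_z(\tau))=(\Im\tau)H_S(z)$ there), and satisfies $\varlimsup_{\tau\to x}(v(f_z(\tau))-\sigma_S M)\leq 0$ for every $x\in\R$ since $f_z(\R)\subset\T^n$ and $v$ is upper semi-continuous. Lemma \ref{lem:4.2} applied to $\tau\mapsto v(f_z(\tau))-\sigma_S M$ gives $v(f_z(\tau))\leq\sigma_S M+(\Im\tau)H_S(z)$ in the upper half-plane, and evaluating at $\tau=i$ yields $v(z)\leq H_S(z)+\sigma_S M$. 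This bound extends from $\C^{*n}$ to all of $\C^n$ by the sub-mean-value inequality for $v$ together with continuity of $H_S$ (Proposition \ref{prop:3.4}), since $\C^n\setminus\C^{*n}$ has Lebesgue measure zero, so $C_F:=\sigma_S M$ settles the claim.
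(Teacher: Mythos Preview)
Your proof is correct and follows essentially the same strategy as the paper's: reduce to the classical Lelong class via $\L^S(\C^n)\subset\sigma_S\L(\C^n)$ to get a uniform bound on a set containing $\T^n$, then use the identification $V^S_{\T^n}=H_S$ to convert this into the $H_S$-growth estimate. The paper compresses your steps by quoting Klimek, Proposition 5.2.1, for the local uniform bound (in place of your explicit extraction of a compact non-pluripolar $F$ and the inequality $v/\sigma_S\le V_F^*$), and then invokes Proposition~\ref{prop:4.3} directly for $\overline\D^n$, whereas you re-derive the special case $E=\T^n$ of Proposition~\ref{prop:4.3} via Lemma~\ref{lem:4.2} and the curves $f_z$. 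That second paragraph of yours could therefore be replaced by the single line ``$v-\sigma_S M\le V^S_{\T^n}=H_S$ by Proposition~\ref{prop:4.3}''. Your lower bound is identical to the paper's.
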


\begin{proof}  The upper regularization
$V^{S*}_{E,q}$ of $V^{S}_{E,q}$ is plurisubharmonic 
in $\C^n$ if $q$ is an admissible weight with respect to $S$ 
on  $E$. 
Since $V^S_{K,q}\leq q$ on $K$, $q$ is admissible on $K$ and
$\{z\in K\,;\, q(z)<+\infty\} 
\subseteq \{z\in \C^n \,;\, V^S_{K,q}(z)<+\infty\}$, 
the set on the right is non-pluripolar. By Klimek \cite[Proposition 5.2.1]{Kli:1991}
it follows that the family $\mathcal{U}=\{u\in \L^S(\C^n)\,;\,
u|_K\leq q\}\subset \sigma_S \L(\C^n)$ is locally
uniformly bounded above, where 
$\sigma_S=\varphi_S({\mathbf 1})$ is the logarithmic type of $H_S$. 
Let $c>0$ be such that $u|_{{\overline{\D}}^n}\leq c$ for 
all $u\in \mathcal{U}$.  Then by Proposition \ref{prop:4.3}, 
we have $V^{S}_{K,q}\leq V^{S}_{{\overline{\D}}^n}+c= H_S+c$. 
Hence, $V^{S*}_{K,q}\in \L^S(\C^n)$. If  
$c=\max_{w\in K}H_S(w)-\min_{w\in K}q(w)$ then $H_S-c\leq
V^{S}_{K,q}$, so $V^{S*}_{K,q}\in \L^S_+(\C^n).$
\end{proof}

\bigskip
Admissible weights on unbounded closed sets yield the same Siciak and
Siciak-Zakharyuta functions as some compact subsets. Admissible
weights do not obstruct the growth of Siciak-Zakharyuta functions.   
See  Bloom \cite[Appendix~B]{SaffTotik:1997}.

\begin{proposition}\label{prop:4.6}
Let $S$ be a compact convex subset of $\R^n_+$ with $0\in S$ and
$q$ an admissible weight on a closed subset $E$ of $\C^n$,
and set $E_R=E\cap \overline B(0,R)$ for every $R>0$.  
Then $V^S_{E,q}=V^S_{E_R,q}$ and $\Phi^S_{E,q}=\Phi^S_{E_R,q}$ 
for $R$ sufficiently large. Furthermore, $V^{S*}_{E,q}\in \L^S_+(\C^n)$.
\end{proposition}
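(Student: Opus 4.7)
The plan is to reduce the unbounded case to the compact case covered by Proposition~\ref{prop:4.5}, using admissibility condition~(iii) as the engine that allows us to discard the part of $E$ outside a large ball.

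The first step is to check that $q|_{E_R}$ is itself an admissible weight on the compact set $E_R$ for all sufficiently large $R$. Lower semicontinuity is inherited, and (iii) is vacuous on a compact set; the one substantive point is that $\{z\in E_R\,;\, q(z)<+\infty\}$ must remain non-pluripolar. This I would obtain by a countable-union argument: the non-pluripolar set $\{z\in E\,;\, q(z)<+\infty\}$ is the union of its intersections with the closed balls $\overline B(0,n)$, $n\in\N^*$, and a countable union of pluripolar sets is pluripolar, so one of these intersections must be non-pluripolar. Fixing such an $R_0$ and applying Proposition~\ref{prop:4.5} to $E_{R_0}$ produces a constant $c_0$ with $V^{S*}_{E_{R_0},q}\leq c_0+H_S$ everywhere on $\C^n$.

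The second step uses (iii) for $E$ to select $R\geq R_0$ so large that $q(z)>c_0+H_S(z)$ for every $z\in E$ with $|z|>R$. For any $u\in\L^S(\C^n)$ with $u|_{E_R}\leq q$ one then has $u\leq V^S_{E_R,q}\leq V^S_{E_{R_0},q}\leq V^{S*}_{E_{R_0},q}\leq c_0+H_S$ on all of $\C^n$, the inclusion $E_{R_0}\subseteq E_R$ reversing the order in the supremum defining $V^S$. On $E\setminus E_R$ this forces $u<q$ by the choice of $R$, while $u\leq q$ is given on $E_R$, so $u$ qualifies as a competitor for $V^S_{E,q}$. Taking the supremum over such $u$ yields $V^S_{E_R,q}\leq V^S_{E,q}$, and the reverse inequality is immediate from $E_R\subseteq E$. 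The identical argument applied to $u=\tfrac{1}{m}\log|p|$, which belongs to $\L^S(\C^n)$ by Theorem~\ref{thm:3.6}, promotes the estimate $|p|\leq e^{mq}$ from $E_R$ to all of $E$, giving $\Phi^S_{E_R,q,m}\leq \Phi^S_{E,q,m}$ for every $m$ and hence the equality of the Siciak extremal functions. The assertion $V^{S*}_{E,q}\in\L^S_+(\C^n)$ then follows since $V^{S*}_{E,q}=V^{S*}_{E_R,q}$ and Proposition~\ref{prop:4.5} applies to the compact $E_R$.

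The one delicate point is \emph{uniformity}: a priori the bound $u\leq c_u+H_S$ carries a constant depending on $u$, so (iii) alone would only provide a $u$-dependent radius beyond which $u\leq q$ on $E$. Proposition~\ref{prop:4.5} is used precisely to remove this dependence by supplying a single constant $c_0$ that majorizes every competitor for $V^S_{E_{R_0},q}$, after which a single invocation of~(iii) produces an $R$ that works simultaneously for all $u$ and all $p$.
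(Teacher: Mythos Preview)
Your argument is correct and follows essentially the same route as the paper's proof: the countable-union argument to secure a non-pluripolar $E_{R_0}$, Proposition~\ref{prop:4.5} to obtain a uniform bound $c_0+H_S$, condition~(iii) to choose $R\geq R_0$, and then the promotion of competitors from $E_R$ to $E$. Your treatment is in fact slightly more careful than the paper's on one point: you apply the countable-union argument directly to $\{z\in E\,;\, q(z)<+\infty\}$ rather than to $E$, which is exactly what is needed to verify that $q|_{E_{R_0}}$ is an admissible weight and hence that Proposition~\ref{prop:4.5} applies.
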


\begin{proof}   Since $E_R\subseteq E$ for every $R>0$ we have
$\Phi^{S}_{E,q}\leq \Phi^{S}_{E_R,q}$ and
$V^S_{E,q} \leq V^S_{E_R,q}$, so we need to prove the reverse 
inequalities.  
By  Definition \ref{def:4.4},  
$E$ is non-pluripolar and by \cite[Corollary 4.7.7]{Kli:1991}
a countable union of  pluripolar sets is pluripolar. It follows 
that  $E_{R_0}$ is non-pluripolar for some $R_0>0$ and consequently
$E_R$  is pluripolar for every $R\geq R_0$.  By 
Proposition \ref{prop:4.5},  $V^{S*}_{E_R,q}\in \L^S(\C^n)$  
and it follows that  for some constant $c>0$
\begin{equation}
  \label{eq:4.1}
  V^{S*}_{E_R,q}(z) \leq c+H_S(z), \qquad z\in \C^n.
\end{equation}
Condition (iii) in Definition \ref{def:4.4} implies that 
we can choose $R_1\geq R_0$ such that 
\begin{equation}
  \label{eq:4.2}
  q(z)-H_S(z) \geq  c, \qquad z\in E\setminus E_{R_1}.
\end{equation}
Now we take $u\in \L^S(\C^n)$ and assume that $u\leq q$ on $E_{R_1}$.
By (\ref{eq:4.1}) we have $u\leq c+H_S(z)$ for all $z\in \C^n$ and by
(\ref{eq:4.2}) we have $u\leq q$ on $E\setminus E_{R_1}$. 
Hence, $u\leq V^S_{E,q}$.

\medskip
Let $p\in \mathcal{P}^S_m(\C^n)$ for some $m\in \N$ be such that
$\|pe^{-mq}\|_{E_{R_1}}\leq 1$. By Theorem \ref{thm:3.6}  we have
$u=\log|p|^{1/m} \in \L^S(\C^n)$ and $u \leq q$ on $E_{R_1}$. Again by
(\ref{eq:4.1}) and 
(\ref{eq:4.2}) we have $u\leq q$ on $E\setminus E_{R_1}$ as
well. Then $\|pe^{-mq}\|_{E}\leq 1$ and  
$|p|^{1/m}\leq \Phi^S_{E,q}$.
The last statement follows directly from Proposition \ref{prop:4.5}.
\end{proof}

\medskip
For a general compact convex $S$ with $0\in S$, 
let  $S'=\overline{S\bigcap \Q^n}$  be the closure  of the set of 
rational points  in $S$. Then
${\mathcal P}^{S}_m(\C^n)={\mathcal P}^{S'}_{m}(\C^n)$  
for every $m\in \N^*$ and consequently 
$\log \Phi^{S}_{E,q}=\log \Phi^{S'}_{E,q}
\leq V^{S'}_{E,q}\leq V^S_{E,q}$.  
Observe that $S=S'$ if $S$ is a convex body but $S \neq S'$ 
for example if $S$ is a line segment in $\R^2$ with 
irrational slope.

\begin{proposition}\label{prop:4.7} 
Let $S\subset \R^n_+$ be compact and convex with $0\in S$. 
If $S\cap \Q^n$ is not dense in $S$,  then for every admissible
weight $q$ on a closed $E\subset \C^n$ we have 
$\log \Phi^S_{E,q}\neq V^S_{E,q}$. 
\end{proposition}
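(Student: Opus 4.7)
The plan is to exploit that $\mathcal{P}^S_m(\C^n)=\mathcal{P}^{S'}_m(\C^n)$ for every $m$, where $S':=\overline{S\cap\Q^n}$: any $\alpha\in(mS)\cap\N^n$ satisfies $\alpha/m\in S\cap\Q^n\subseteq S'$, while $S'\subseteq S$ is clear. Consequently $\log\Phi^S_{E,q}=\log\Phi^{S'}_{E,q}$. I will bound this quantity above by $H_{S'}$ up to a constant and bound $V^S_{E,q}$ below by $H_S$ up to a constant, then produce a direction in $\C^{*n}$ along which $H_S-H_{S'}$ is unbounded.

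Before applying the previous machinery to $S'$, I would verify that $S'$ is compact and convex with $0\in S'$, and that $q$ remains admissible with respect to $S'$ on $E$. Compactness and $0\in S'$ are immediate; convexity is obtained by writing elements of $S'$ as limits of rationals in $S\cap\Q^n$ and approximating arbitrary $\lambda\in[0,1]$ by rationals $\lambda_j$, so that the rational convex combinations stay inside $S\cap\Q^n$. Admissibility of $q$ with respect to $S'$ follows from Definition \ref{def:4.4}(iii) since $H_{S'}\leq H_S$. Proposition \ref{prop:4.6} applied to $S'$ gives $V^{S'*}_{E,q}\in\L^{S'}_+(\C^n)$, and therefore there is a constant $c_1\geq 0$ such that
\begin{equation*}
\log\Phi^S_{E,q}=\log\Phi^{S'}_{E,q}\leq V^{S'}_{E,q}\leq V^{S'*}_{E,q}\leq c_1+H_{S'} \quad \text{on } \C^n,
\end{equation*}
where the first inequality is the general fact, recorded after Proposition \ref{prop:4.5}, that Theorem \ref{thm:3.6} places each normalized $\log|p|^{1/m}$ inside $\L^{S'}(\C^n)$ with the correct bound on $E$.

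For the lower bound on $V^S_{E,q}$, set $c_2:=\sup_{z\in E}(H_S(z)-q(z))$. This is finite: when $E$ is bounded, $H_S$ is continuous and $q$ is bounded below as a lower semicontinuous function on a compact set; when $E$ is unbounded, $H_S-q$ is bounded above outside large balls by Definition \ref{def:4.4}(iii) and on any bounded piece by the previous reasoning. Then $H_S-c_2\in\L^S(\C^n)$ with $H_S-c_2\leq q$ on $E$, so $V^S_{E,q}\geq H_S-c_2$ on $\C^n$.

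To conclude, pick $s_0\in S\setminus S'$, which exists by the hypothesis, and separate it from the closed convex set $S'$ via Hahn--Banach: there is $\xi_0\in\R^n$ with $\varphi_{S'}(\xi_0)<\scalar{s_0}{\xi_0}\leq\varphi_S(\xi_0)$. Setting $\delta:=\varphi_S(\xi_0)-\varphi_{S'}(\xi_0)>0$ and choosing $z_t\in\C^{*n}$ with $\Log(z_t)=t\xi_0$ for $t>0$, positive homogeneity gives $H_S(z_t)-H_{S'}(z_t)=t\delta$. Combining the two bounds,
\begin{equation*}
V^S_{E,q}(z_t)-\log\Phi^S_{E,q}(z_t)\geq t\delta-c_1-c_2,
\end{equation*}
which is strictly positive for $t$ large enough. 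Hence $\log\Phi^S_{E,q}\neq V^S_{E,q}$. The most delicate points are the convexity of $S'=\overline{S\cap\Q^n}$ and the verification that $q$ is still admissible with respect to $S'$; once those are in hand, the argument is a routine assembly of Propositions \ref{prop:4.5}--\ref{prop:4.6} together with a single Hahn--Banach separation.
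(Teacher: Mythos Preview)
Your proof is correct and follows essentially the same route as the paper's: reduce $\Phi^S_{E,q}$ to $\Phi^{S'}_{E,q}$ via $\mathcal P^S_m=\mathcal P^{S'}_m$, sandwich both extremal functions between $H_{S'}$ and $H_S$ up to constants (using Propositions \ref{prop:4.5}--\ref{prop:4.6}), and separate $S'$ from $S$ by a direction $\xi_0$ along which $H_S-H_{S'}$ grows linearly. The only differences are cosmetic: you spell out the convexity of $S'$, the admissibility of $q$ with respect to $S'$, and the finiteness of $c_2=\sup_E(H_S-q)$ directly, whereas the paper simply cites Proposition \ref{prop:4.5} for both the upper and lower bounds.
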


\begin{proof} 
Since  $S'=\overline{S\cap \Q^n}\subsetneq S$, there exists
a $\xi\in \R^n$ with $\varphi_{S'}(\xi)<\varphi_S(\xi)$.  
By Proposition \ref{prop:4.5}
there exist constants  $c$ and $c'$  
such that 
$$
H_{S}(z) - c  \leq V^{S}_{E,q} \quad \text{ and } \quad 
V^{S'}_{E,q}(z)\leq H_{S'}(z)+ c'.
$$ 
For $r>0$ sufficiently large 
we have $\varphi_{S'}(r\xi)+c' <\varphi_S(r\xi)-c$, so for
$z=(e^{r\xi_1}, \ldots, e^{r\xi_n})$
$$\log \Phi^S_{E,q}(z)=\log \Phi^{S'}_{E,q}(z)
\leq V^{S'}_{E,q}(z)\leq H_{S'}(z)+ c' <  H_{S}(z) - c  \leq
V^{S}_{E,q}(z).
$$ 
\end{proof}

The next result regards the Siciak-Zakharyuta functions with respect
to $S$, $E$ and $q$ when we have decreasing sequences of sets
$S_j\searrow S$ or increasing sequences of weights $q_j \nearrow q$.

\begin{proposition}   \label{prop:4.8} 
Let $S_j$, $j\in \N$ and $S$ be compact convex  subsets of 
$\R^n_+$  with $0\in S$ and  $S_j\searrow S$,  
$q$ be an admissible weight on a compact subset $K$ of $\C^n$,
$(K_j)_{j\in\N}$ be a decreasing sequence of compact sets with 
$\bigcap_jK_j=K$, 
and  $(q_j)_{j\in \N}$ be  a sequence in $\LSC(K_j)$
such that $q_j\nearrow q$. Then: 
\begin{enumerate}
\item [{\bf (i)}]  $\L^S(\C^ n)=\bigcap_{j\in \N}\L^{S_j}(\C^ n)$.
\item [{\bf (ii)}]  If $V^{S_j*}_{K,q}\leq q$ on $K$ for some  $j$, then  
$V^{S_j}_{K,q}\searrow V^{S}_{K,q}$  as $j\to \infty$. 
\item[{\bf (iii)}]  Every $q_j$ is an admissible weight on $K_j$, 
$V^{S}_{K_j,q_j}\nearrow V^S_{K,q}$ and $\Phi^S_{K_j,q_j}\nearrow
\Phi^S_{K,q}$ as $j\to \infty$.
\end{enumerate}
\end{proposition}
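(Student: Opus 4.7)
For part (i), the inclusion $\L^S(\C^n)\subseteq\bigcap_j\L^{S_j}(\C^n)$ follows immediately from $H_S\leq H_{S_j}$. For the reverse, given $u\in\bigcap_j\L^{S_j}(\C^n)$, the constant $c=\sup_{\overline\D^n}u$ is finite by local boundedness of plurisubharmonic functions. Since $u-c\leq 0$ on $\overline\D^n$ and $u-c\in\L^{S_j}(\C^n)$, Proposition \ref{prop:4.3} applied with $E=\overline\D^n$ gives $u-c\leq V^{S_j}_{\overline\D^n}=H_{S_j}$. The conclusion follows once one verifies $H_{S_j}\searrow H_S$ pointwise on $\C^n$: on $\C^{*n}$ this is the elementary fact that $\varphi_{S_j}(\xi)\searrow\varphi_S(\xi)$ (from compactness and $\bigcap S_j=S$), while on the coordinate hyperplanes the formula of Proposition \ref{prop:3.3} reduces it to the same statement for the slices $S_{j,J}\searrow S_J$.

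For part (ii), the containments $\L^{S_{j+1}}(\C^n)\subseteq\L^{S_j}(\C^n)$ from (i) yield $V^S_{K,q}\leq V^{S_{j+1}}_{K,q}\leq V^{S_j}_{K,q}$, together with the analogous chain for the upper regularizations. The decreasing limit $w=\lim_j V^{S_j*}_{K,q}$ is plurisubharmonic because it stays above $V^{S*}_{K,q}\geq H_S-c$ provided by Proposition \ref{prop:4.5}. Since $w\leq V^{S_j*}_{K,q}\leq c_j+H_{S_j}$ for each $j$, part (i) gives $w\in\L^S(\C^n)$. The hypothesis $V^{S_{j_0}*}_{K,q}\leq q$ on $K$ propagates by monotonicity to every $j\geq j_0$, so $w\leq q$ on $K$ and hence $w\leq V^S_{K,q}$. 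Squeezing $V^S_{K,q}\leq V^{S_j}_{K,q}\leq V^{S_j*}_{K,q}$ and letting $j\to\infty$ collapses the chain to the claimed equality.

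For part (iii), admissibility of each $q_j$ on $K_j$ is immediate: lower semicontinuity is assumed, $\{q_j<+\infty\}$ contains the non-pluripolar set $\{q<+\infty\}\cap K$ because $q_j\leq q$ on $K$, and condition (iii) of Definition \ref{def:4.4} is vacuous on the compact set $K_j$. Monotonicity $V^S_{K_j,q_j}\leq V^S_{K_{j+1},q_{j+1}}\leq V^S_{K,q}$ is a direct set-inclusion argument on the defining families. The central step is the reverse inequality: for $u\in\L^S(\C^n)$ with $u\leq q$ on $K$, the constants $c_j=\sup_{K_j}(u-q_j)$ are finite because $u-q_j$ is upper semicontinuous on the compact set $K_j$, the shifted function $u-c_j$ lies in the defining family for $V^S_{K_j,q_j}$, and therefore $u(z)-c_j\leq V^S_{K_j,q_j}(z)$. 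The plan is then to establish $\limsup_j c_j\leq 0$ by extracting maximizers $z_{j_k}\in K_{j_k}$, passing to a subsequential limit $z_*\in\bigcap_j K_j=K$, and combining $\limsup_k u(z_{j_k})\leq u(z_*)$ (upper semicontinuity of $u$) with $\liminf_k q_{j_k}(z_{j_k})\geq q(z_*)$ (which in turn uses $q_{j_k}\geq q_{j_0}$ for $k$ large, lower semicontinuity of $q_{j_0}$ at $z_*$, and $q_{j_0}(z_*)\nearrow q(z_*)$).

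The convergence $\Phi^S_{K_j,q_j}\nearrow\Phi^S_{K,q}$ is obtained by the same strategy at the polynomial level: for $p\in\mathcal P^S_m(\C^n)$ admissible for $(K,q)$, the scaled polynomial $p/c_j$ with $c_j=\|pe^{-mq_j}\|_{K_j}$ is admissible for $(K_j,q_j)$, and the identical Cantor-type argument—using continuity of $|p|$ in place of upper semicontinuity of $u$—shows $\limsup_j c_j\leq 1$, yielding $|p(z)|^{1/m}\leq c_j^{1/m}\Phi^S_{K_j,q_j}(z)$ and the desired convergence after taking suprema over admissible $p$ and over $m$. The principal obstacle throughout (iii) is precisely this semicontinuity-plus-Cantor argument; everything else is a formal manipulation of suprema.
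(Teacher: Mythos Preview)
Your argument is correct and, for parts (i) and (ii), essentially identical to the paper's own proof: both use Proposition~\ref{prop:4.3} with $E=\overline\D^n$ to get $u-c\leq H_{S_j}$ and then pass to the limit $H_{S_j}\searrow H_S$, and both squeeze $V^S_{K,q}\leq V^{S_j}_{K,q}\leq V^{S_j*}_{K,q}\searrow w\in\L^S(\C^n)$ using part (i) together with $w\leq q$ on $K$.

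For part (iii) you take a genuinely different route. The paper fixes $\varepsilon>0$ and, for each $z_0\in K$, finds $j_\varepsilon$ with $u(z_0)-\varepsilon<q_{j_\varepsilon}(z_0)$; semicontinuity then gives this on a neighbourhood, a finite cover of $K$ yields an open $U\supset K$ with $u-\varepsilon<q_j$ on $U$, and finally $K_j\subset U$ for $j$ large because $K_j\searrow K$. Your approach instead introduces $c_j=\sup_{K_j}(u-q_j)$, extracts maximizers $z_{j_k}\to z_*\in K$, and bounds $\limsup c_{j_k}\leq u(z_*)-q(z_*)\leq 0$ by combining upper semicontinuity of $u$ with the diagonal lower-semicontinuity estimate $\liminf q_{j_k}(z_{j_k})\geq q(z_*)$. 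Both are standard compactness arguments; yours is arguably cleaner because the quantitative error is packaged into the single number $c_j$ rather than an auxiliary $\varepsilon$, and it transfers verbatim to the $\Phi$-statement (where the paper simply says ``the same argument''). One small slip: upper semicontinuity of $u-q_j$ on the compact set $K_j$ only yields $c_j<+\infty$, not finiteness; but this is harmless, since you can work with $c_j^+=\max(c_j,0)$ throughout without change.
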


\begin{proof}
{\bf (i)} \ Obviously $\L^S(\C^n)\subseteq \bigcap_j\L^{S_j}(\C^n)$.
Let $u\in \bigcap_j\L^{S_j}(\C^n)$ and set $c_u=\sup_{{\overline \D}^n}u$.  
Then by Proposition
\ref{prop:4.3} we have $u-c_u\leq V^{S_j}_{\overline{\D}^n}=H_{S_j}$ for every $j$.
We have $H_{S_j}\searrow H_S$, so $u \leq c_u+H_S$ and $u\in \L^S(\C^n)$.

\smallskip\noindent
{\bf (ii)}  We  have  
$V_{K,q}^S\leq V^{S_j}_{K,q}$ and since $V^{S_j*}_{K,q}\leq q$
the equation $V^{S_j*}_{K,q}=V^{S_j}_{K,q}$ holds. 
By  Proposition \ref{prop:4.5} we have
$V^{S_j}_{K,q} \in \L^{S_j}(\C^n)$. Since the sequence is decreasing 
we have $V=\lim_{j\to \infty}V^{S_j*}_{K,q} \in \bigcap_j \L^{S_j}(\C^n)=\L^S(\C^n)$
and $V\leq q$ on $K$.  
Hence, $$ V^S_{K,q}\leq \lim_{j\to \infty}V^{S_j}_{K,q}\leq V\leq V^S_{K,q}.
$$
{\bf (iii)} Since $F=\{z\in K\,;\, q(z)<+\infty\}
\subseteq F_j=\{z\in K_j\,;\, q_j(z)<+\infty\}$ 
for every $j$ and $F$ is non-pluripolar, the set
$F_j$ in non-pluripolar and 
every $q_j$ is an admissible  weight on $K_j$.  
Since the sequence  $(V^{S}_{K_j,q_j})_{j\in \N}$ is increasing 
and bounded above by $V^{S}_{K,q}$ we need to show that 
$V^{S}_{K,q}\leq V=\lim_{j\to \infty}V^{S}_{K_j,q_j}$.
For that purpose we take $u\in \L^S(\C^n)$  with $u\leq q$ on $K$
and $\varepsilon>0$.  For every $z_0\in K$ there exists 
$j_\varepsilon$ such that $u(z_0)-\varepsilon <q_{j_\varepsilon}(z_0)$.
Since $u\in \USC(\C^n)$ and $q_{j_\varepsilon}\in \LSC(K_j)$ it follows
that we have   $u(z)-\varepsilon <q_{j_\varepsilon}(z)$ for all $z\in
K_j\cap U_0$ for  some  neighborhood $U_0$ of $z_0$ in $\C^n$.  
A simple compactness argument gives that 
there exists an open neighborhood $U$ of $K$ such that  
$u-\varepsilon <q_j$ on $K_j\subset U$ for
$j\geq j_\varepsilon$, possibly with $j_\varepsilon$ replaced by a larger
number.  Hence, $u-\varepsilon \leq V$, and since $\varepsilon$ is
arbitrary   we conclude that  $u\leq V$.  By taking supremum over 
$u$ we get $V^S_{K,q}\leq V$.   The same argument for
$\log|p|^{1/m}$, $p\in {\mathcal P}^S_m(\C^n)$,  in the role of $u$
implies that  $\Phi^S_{K,q}\leq \lim_{j\to \infty}\Phi^S_{K_j,q_j}$
by Proposition \ref{prop:2.2}.
\end{proof}

\section{Regularity of Siciak-Zakharyuta functions}
\label{sec:05}

In this section we study regularity of 
the functions $\Phi^S_{E,q}$ and $V^S_{E,q}$, where
we assume that $S$ is a convex subset of $\R^n_+$ with $0\in S$
and that $q\colon E\to  \R\cup\{+\infty\}$ is a
function on a subset $E$  of $\C^n$.  
Since the  Siciak function $\Phi^S_{E,q}$ is the supremum 
of a subclass of ${\mathcal C}(\C^n)$ we have
$\Phi^S_{E,q}\in\LSC(\C^n)$.  
If $q$ is an admissible weight on a closed set $E$,
then by Proposition \ref{prop:4.6} we have 
$V^{S*}_{E,q}\in \L^S_+(\C^n)$.

If  $V^{S*}_{E,q}\leq q$ on $E$ then 
$V^S_{E,q}=V^{S*}_{E,q}$ and we conclude that
  $V^S_{E,q}$ is continuous at
every point where it is lower semicontinuous. 
In this section we  prove that 
$V^S_{E,q}\in \LSC(\C^{*n})$ for every admissible weight on 
a compact set $K$. 
For that purpose we need to discuss
regularization of plurisubharmonic functions, but we begin with
local $\L$-regularity of sets.

\begin{definition}\label{def:5.1}  
A subset $E$  of $\C^n$ is said to be {\it $\L$-regular} at a point
$a\in \overline E$ if $V_E$ is continuous at $a$ and $E$ is said to be
{\it locally $\L$-regular} at $a$ if $E\cap U$ is
$\L$-regular at $a$ for every open neighborhood $U$ of $a$.   We say that 
$E$ is ({\it locally}) {\it $\L$-regular} if $E$  is 
(locally) $\L$-regular at every  $a\in \overline E$.
\end{definition}

\medskip
The function $V_E$ is continuous at  every interior point
of $E$. Moreover, $V_E$ is continuous at 
 $a\in \overline E$   if and only
if $V_E^*(a)=0$.   Thus, it is sufficient to check the condition for local
$\L$-regularity at boundary points only.

\begin{lemma} \label{lem:5.2}
Let $E$ be a closed  subset of  $\C^n$, 
$a\in E$ and assume that 
there exists a norm  with closed unit ball $\B$ 
such that for some $\delta>0$ and $b\in E$ we have
$a\in B=b+\delta\B\subset E$.
Then  $E$ is locally $\L$-regular at $a$.
\end{lemma}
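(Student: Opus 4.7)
The plan is to exploit Klimek's explicit formula $V_B(z)=\log^+(\|z-b\|/\delta)$ from Example 5.1.1 (which is already invoked in the excerpt before Proposition \ref{prop:4.3}) together with monotonicity of $V_F$ in $F$, after replacing $B$ by a family of smaller balls that sit inside any prescribed neighborhood of $a$.

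Fix an arbitrary open neighborhood $U$ of $a$. The point $a$ need not lie in the interior of $B$, so instead of shrinking $B$ at its center I would slide the center toward $a$. For each $\lambda\in(0,1]$ set $b_\lambda=(1-\lambda)a+\lambda b$ and $B_\lambda=b_\lambda+\lambda\delta\B$. A direct check using the triangle inequality shows that $B_\lambda\subset B$: if $\|z-b_\lambda\|\le \lambda\delta$ then
\begin{equation*}
\|z-b\|\le \|z-b_\lambda\|+\|b_\lambda-b\|\le \lambda\delta+(1-\lambda)\|a-b\|\le \lambda\delta+(1-\lambda)\delta=\delta.
\end{equation*}
Moreover $\|a-b_\lambda\|=\lambda\|a-b\|\le\lambda\delta$, so $a\in B_\lambda$. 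Hence $B_\lambda\subset B\subset E$ and $a\in B_\lambda$ for every $\lambda\in(0,1]$. Since $\operatorname{diam}B_\lambda\to 0$ and $b_\lambda\to a$ as $\lambda\to 0$, we can choose $\lambda>0$ small enough so that $B_\lambda\subset U$, and therefore $B_\lambda\subset E\cap U$.

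Now monotonicity of the Siciak-Zakharyuta function in the set gives $V_{E\cap U}\le V_{B_\lambda}$, and Klimek's formula provides
\begin{equation*}
V_{B_\lambda}(z)=\log^+\bigl(\|z-b_\lambda\|/(\lambda\delta)\bigr),\qquad z\in\C^n,
\end{equation*}
which is continuous on $\C^n$ and vanishes at $a$ since $\|a-b_\lambda\|\le\lambda\delta$. Consequently
\begin{equation*}
0\le V_{E\cap U}^{*}(a)\le V_{B_\lambda}^{*}(a)=V_{B_\lambda}(a)=0,
\end{equation*}
so $V_{E\cap U}^{*}(a)=0$, i.e.\ $V_{E\cap U}$ is continuous at $a$. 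Since $U$ was arbitrary, $E$ is locally $\L$-regular at $a$.

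There is no real obstacle beyond bookkeeping; the one spot where care is needed is the possibility that $a$ lies on $\partial B$, which is precisely why the naive choice of a small ball centered at $a$ may fail and forces the family $B_\lambda$ sliding toward $a$. Everything else reduces to Klimek's ball formula and the monotonicity $F_1\subset F_2\Rightarrow V_{F_2}\le V_{F_1}$.
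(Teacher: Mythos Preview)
Your proof is correct and follows essentially the same approach as the paper's: both slide the center of the ball toward $a$ by setting $b_\lambda=(1-\lambda)a+\lambda b$ (the paper writes $c=(1-\tau)a+\tau b$) and use Klimek's formula $V_{b_\lambda+\lambda\delta\B}(z)=\log^+(\|z-b_\lambda\|/(\lambda\delta))$ together with monotonicity. Your write-up is in fact slightly more careful, since you explicitly verify $B_\lambda\subset B$ and work with $V^*_{E\cap U}$ rather than $V^*_E$.
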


\begin{proof}  Let $\|\cdot\|$ denote the norm and
$U$ be open with $a\in U$.
Choose $\tau>0$ so small that $C=c+\tau\delta \B\subset U$, where
$c=(1-\tau) a+\tau b$.  
We have $C\subset B\subset E$, so
$$0\leq V^*_E(a)\leq V_C^*(a)=\log^+(\|a-c\|/\tau\delta)
=\log^+(\|a-b\|/\delta)=0.
$$
\end{proof}

Observe that the lemma implies that every set of the form
$E=A+\delta \B$ is locally $\L$-regular, where $A\subset \C^n$ 
is closed and $\B$ is the closed unit ball with respect to some norm.
The following result is a generalization of 
Siciak \cite[Proposition 2.16]{Sic:1981}.

\begin{proposition} \label{prop:5.3}  
For every continuous function  $q$ on a locally $\L$-regular
closed subset $E$ of $\C^n$ we have $V^{S*}_{E,q}\leq q$
and consequently $V^{S}_{E,q}=V^{S*}_{E,q}$.
\end{proposition}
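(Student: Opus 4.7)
The plan is to prove the pointwise bound $V^{S*}_{E,q}(a)\le q(a)$ at every $a\in E$; the second assertion will then be automatic. Indeed, $V^S_{E,q}\le V^{S*}_{E,q}$ always, and once we know $V^{S*}_{E,q}\le q$ on $E$ the function $V^{S*}_{E,q}$ becomes a competitor in the supremum defining $V^S_{E,q}$ (Lelong-class membership following from the uniform upper bound $u\le c+H_S$ on the family of competitors, argued as in Proposition~\ref{prop:4.5}), giving the reverse inequality. The case $q(a)=+\infty$ is vacuous, so assume $q(a)<+\infty$.

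For the main bound, fix $\varepsilon>0$ and use continuity of $q$ at $a$ to select an open neighborhood $U$ of $a$ with $q(z)<q(a)+\varepsilon$ for every $z\in E\cap U$. In the generic case $\sigma_S>0$, the estimate (\ref{eq:3.4}) shows that for any $u\in\L^S(\C^n)$ with $u\le q$ on $E$ the rescaled function
\[
\tilde u:=\tfrac{1}{\sigma_S}\bigl(u-q(a)-\varepsilon\bigr)
\]
belongs to $\L(\C^n)$ and is non-positive on $E\cap U$. By the very definition of $V_{E\cap U}$ we conclude $\tilde u\le V_{E\cap U}$; unwinding and taking the supremum over all admissible $u$, then passing to upper regularizations, yields
\[
V^{S*}_{E,q}(z)\le \sigma_S\, V^*_{E\cap U}(z)+q(a)+\varepsilon, \qquad z\in\C^n.
\]
Evaluating at $z=a$ and invoking local $\L$-regularity of $E$ at $a$, which gives $V^*_{E\cap U}(a)=V_{E\cap U}(a)=0$ (the zero function is a competitor in the sup defining $V_{E\cap U}$, so $V_{E\cap U}\ge 0$ everywhere, while by definition every competitor is non-positive on $E\cap U$ and in particular at $a$), produces $V^{S*}_{E,q}(a)\le q(a)+\varepsilon$. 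Letting $\varepsilon\searrow 0$ establishes the desired inequality.

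The degenerate case $\sigma_S=0$ forces $S=\{0\}$, in which case $\L^S(\C^n)$ reduces to constants (by the Liouville theorem for plurisubharmonic functions on $\C^n$) and the claim is immediate. The main obstacle is therefore conceptual rather than computational: one must recognise that the correct way to carry the weighted problem into the realm of classical $\L$-regularity is the affine rescaling $u\mapsto(u-q(a)-\varepsilon)/\sigma_S$, which works precisely because of the global bound $H_S\le\sigma_S\log^+\|\cdot\|_\infty$ from (\ref{eq:3.4}). Once this rescaling is in place the argument reduces entirely to the hypothesis that $E\cap U$ is $\L$-regular at $a$, and no further structural assumptions on $S$ beyond those standing in the paper are needed.
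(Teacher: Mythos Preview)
Your proof is correct and follows essentially the same approach as the paper's: the paper condenses your argument into the single chain $V^{S*}_{E,q}(a)\le V^{S*}_{E\cap U,\,q(a)+\varepsilon}(a)\le \sigma_S V^*_{E\cap U}(a)+q(a)+\varepsilon=q(a)+\varepsilon$, which is exactly your rescaling argument with the intermediate weighted extremal function suppressed. Your explicit treatment of the degenerate case $\sigma_S=0$ and the unpacking of why $V^{S*}_{E,q}$ becomes a competitor are useful additions, but the core idea is identical.
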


\begin{proof} Let $a\in E$ and take $\varepsilon >0$. 
Since $q$ is continuous on $E$ 
there exists an  open neighborhood $U$ of $a$
such that $q(z)\leq q(a)+\varepsilon$ for 
every $z\in E\cap U$.
Since $E$ is locally $\L$-regular we have
$V_{E \cap U}^*(a)=0$ and 
$$
V^{S*}_{E,q}(a)\leq V^{S*}_{E\cap U,q(a)+\varepsilon}(a)
\leq \sigma_S V^*_{E\cap U}(a)+q(a)+\varepsilon
=q(a)+\varepsilon.
$$
Since $a$ and $\varepsilon$ are arbitrary the inequality holds.  
\end{proof}

Convolution is a standard tool for  approximating functions
$u\in \PSH(\C^n)$.  We define a convolution operator 
$\Loneloc(\C^n)\to \Loneloc(\C^n)$, $u\mapsto u*\mu$, 
for a given positive Borel measure $\mu$ with compact support, 
\begin{equation} \label{eq:5.1}
   u*\mu(z)=\int_{\C^n} u(z-w)\, d\mu(w),
  \qquad z\in \C^n.
\end{equation}
If $u\in \PSH(\C^n)$ then $u*\mu \in \PSH(\C^n)$ and if $\mu$ is a 
probability measure then $u\mapsto u*\mu$  preserves $\mathcal L(\C^n)$.
In the case that $\mu$ is presented by a $\mathcal C^\infty$ function,
$\mu = \psi\, d\lambda$, then $u*\mu = u*\psi$ is a 
$\mathcal C^\infty$ function.
If we define 
$\psi_\delta\in {\mathcal C}_0^\infty(\C^n)$  by  
$\psi_\delta(z)=\delta^{-2n}\psi(z/\delta)$, $\delta>0$, 
with $\psi \in {\mathcal C}_0^\infty(\C^n)$
radially symmetric, $\psi\geq 0$,  and $\int_{\C^n}\psi\, d\lambda
=1$,    
then 
$u*\psi_\delta \searrow u$ as $\delta\searrow 0$. 
See Klimek \cite{Kli:1991} or
Hörmander \cite{Hormander:LPDO,Hormander:SCV}.
Siciak
\cite[Proposition 2.12]{Sic:1981} used convolution 
to prove that $V_{E,q}\in \LSC(\C^n)$ for compact $E$ and $q\in \LSC(E)$.
In general, the class $\L^S(\C^n)$ is not preserved under convolution (cf.~Theorem \ref{thm:5.8}).   

In order to preserve a  particular subclass of 
$\PSH(\C^n)$ under reg\-u\-lar\-iza\-tion,  
special  methods are sometimes needed. 
For example homogeneity is preserved under
\begin{equation} \label{eq:5.2}
 R_\delta u(z)=\int_G u(Az)\psi_\delta(A) \, d\mu(A), \qquad z\in \C^n,
\end{equation}
where $G$ is some group of $n\times n$ matrices with real or complex
entries and $\mu$ is a positive measure on the matrix space 
$\R^{n\times n}$ or $\C^{n\times n}$.     The smoothing kernel
$\psi_\delta$ is chosen so that it converges 
to the Dirac measure $\delta_I$ at the identity $I$
as $\delta\to 0$. 
This method  only gives a $\mathcal C^\infty$ function on 
$\C^n\setminus\{0\}$ when we integrate over the group of
{\it complex} invertible matrices, and it gives a  $\mathcal C^\infty$ function on 
$\C^n \setminus \C\R^n$, where
$\C\R^n = \{\lambda x; \lambda \in \C, x\in \R^n\}$, when we integrate over the group of {\it real}
invertible matrices.
See Sigurðsson \cite{Sig:1986} and  
Hörmander and Sigurðsson \cite{HorSig:1998}.

In order to preserve the classes $\L^S(\C^n)$ for a compact convex 
$S\subset \R^n_+$ with $0\in S$ it is natural to choose $G$ as 
the group of invertible diagonal matrices.  This group can be identified with
$\C^{*n}$ with coordinate wise multi\-pli\-cation, so it is natural to
choose $\mu$ as the Lebesgue measure $\lambda$ on $\C^n$.  

In the following text we allow us a slight abuse of notation 
by identifying a vector denoted by a lower case letter with
a diagonal matrix denoted by the corresponding upper case letter.
Thus, we identify the vector $a\in \C^n$ with the diagonal matrix
$A$ with diagonal $a$ and  in particular the vector 
$\mathbf 1$ with the identity matrix $I$. We define
\begin{align}\label{eq:5.3}
  R_\delta u(z)&=\int_{\C^n} u(Az) \psi_\delta(A)\, d\lambda(A)
=\int_{\C^n} u((I+\delta B)z) \psi(B)\, d\lambda(B)
\\
&=\int_{\C^n} u((1+\delta w_1)z_1,\dots,(1+\delta w_n)z_n) 
\psi(w)\, d\lambda(w), \qquad   z\in \C^n,
\nonumber
\end{align}
where we choose a function $0\leq \psi\in \mathcal C^\infty_0(\C^n)$, rotationally
symmetric in each variable with  $\int_{\C^n}\psi\, d\lambda=1$,
and set $\psi_\delta(z)=\delta^{-2n}\psi((z-{\mathbf 1})/\delta)$. 
By the Fubini-Tonelli theorem 
$R_\delta\colon \Loneloc(\C^n)\to \Loneloc(\C^n)$ and  $R_\delta u\to u$ in 
the $\Loneloc$ topology as $\delta\to 0$ and with local uniform
convergence for $u\in {\mathcal C}(\C^n)$.  Furthermore, 
$R_\delta\colon \PSH(\C^n)\to \PSH(\C^n)$.
This  implies that
if $u\in \L^S(\C^n)$  then
$R_\delta u\in \L^S(\C^n)$,     
more precisely, if $u\leq c_u+H_S$,  Proposition \ref{prop:3.2} implies
\begin{align*}
  R_\delta u(z) &\leq \int_{\C^n}
\big(c_u+H_S({\mathbf 1}+\delta w)+H_S(z)\big) \psi(w) \, d\lambda(w)\\
&\leq c_{u}+C\sigma_S\delta+H_S(z), \qquad z\in \C^n, 
\end{align*}
where $C=\sup_{w\in \supp \psi}\|w\|_\infty$.
The linear map $\C^n\to \C^n$, $A\mapsto Az$, has the 
Jacobi determinant $|z_1\cdots z_n|^2$, so for every 
$z\in \C^{*n}$ and corresponding matrix $Z$ with $z$ on the diagonal
we have
\begin{align*} 
  R_\delta u(z)&= \int_{\C ^n} u(w) \psi_\delta(Z^{-1}w) \, 
|z_1\cdots z_n|^{-2} \, d\lambda(w)
\\
&=\int_{\C ^n} u(w) \psi_\delta(w_1/z_1,\dots,w_n/z_n) \, 
|z_1\cdots z_n|^{-2} \, d\lambda(w).
\end{align*}
By applying the Lebesgue dominated convergence theorem 
we may differ\-en\-ti\-ate with respect to $z_j$ under the integral
sign infinitely often, so
this shows that for every $u\in \Loneloc(\C^n)$ we
have $R_\delta u\in \mathcal C^\infty(\C^{*n})$.  

\begin{proposition} \label{prop:5.4}  
Let $S$ be a compact convex subset of $\R^n_+$ with $0\in S$ and 
$q$ be an admissible weight on a compact subset 
$K$ of $\C^n$.  Then 
\begin{enumerate}
\item [{\bf (i)}]  $V^{S}_{K,q}\in \LSC(\C^{*n})$, and 
  if $\L^S(\C^n)$ is preserved under convolution (see Theorem \ref{thm:5.8}) then
 $V^{S}_{K,q}\in \LSC(\C^{n})$. 
\item[{\bf (ii)}]  If $V^{S*}_{K,q}\leq q$  on $K$ then
$V^{S}_{K,q}\in \L^S(\C^n)\cap {\mathcal C}(\C^{*n})$ 
and if $\log\Phi^S_{K,q}=V^S_{K,q}$ then $V^S_{K,q}\in {\mathcal C}(\C^n)$.
\end{enumerate}
\end{proposition}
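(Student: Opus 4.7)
My plan is to use the regularizing operator $R_\delta$ (and, when $\L^S(\C^n)$ is preserved under convolution, the standard mollifier $u*\psi_\delta$) to exhibit $V^S_{K,q}$ as the pointwise supremum, at each point, of a family of continuous members of $\L^S(\C^n)$ still dominated by $q$ on $K$.  Lower semicontinuity then follows immediately, and in (ii) it combines with upper semicontinuity (Proposition \ref{prop:4.5}) to give continuity.

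For (i), fix $z_0\in\C^{*n}$ and $\alpha<V^S_{K,q}(z_0)$, and pick $u\in\L^S(\C^n)$ with $u\le q$ on $K$ and $u(z_0)>\alpha+\varepsilon$ for some small $\varepsilon>0$.  Applying the torus sub-mean-value inequality to the plurisubharmonic function $w\mapsto u(({\mathbf 1}+\delta w)z)$ against the rotationally-symmetric-in-each-variable probability kernel $\psi$ yields $R_\delta u\ge u$ pointwise, while a rescaling of the kernel and a second application of the same inequality show that $\delta\mapsto R_\delta u(z)$ is monotone, so $R_\delta u\searrow u$ as $\delta\searrow 0$ on $\C^n$.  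Together with the properties $R_\delta u\in\L^S(\C^n)\cap\mathcal C^\infty(\C^{*n})$ established just before the statement, this provides a family of smooth plurisubharmonic candidates approaching $u$ from above.

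The decisive step is then a Hartogs-type uniform estimate on $K$: for every $\varepsilon>0$ there is $\delta_0>0$ with $R_\delta u\le q+\varepsilon$ on $K$ for all $\delta\le\delta_0$.  At each $z\in K$ the pointwise bound $R_\delta u(z)\searrow u(z)\le q(z)$ furnishes $\delta_z$ with $R_{\delta_z}u(z)<q(z)+\varepsilon/2$; lower semicontinuity of $q$ and upper semicontinuity of the psh function $R_{\delta_z}u$ make $\{R_{\delta_z}u<q+\varepsilon\}$ an open neighborhood of $z$ in $\C^n$, and monotonicity in $\delta$ propagates the bound to all $\delta\le\delta_z$; compactness of $K$ then extracts a finite subcover and a common $\delta_0$.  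The function $v_\delta:=R_\delta u-\varepsilon$ belongs to $\L^S(\C^n)\cap\mathcal C^\infty(\C^{*n})$ with $v_\delta\le q$ on $K$, so $v_\delta\le V^S_{K,q}$ on $\C^n$; at $z_0$ one has $v_\delta(z_0)\ge u(z_0)-\varepsilon>\alpha$, and continuity of $v_\delta$ at $z_0\in\C^{*n}$ yields $V^S_{K,q}>\alpha$ on a neighborhood of $z_0$, proving lower semicontinuity on $\C^{*n}$.  The $\C^n$-variant under preservation of $\L^S$ by convolution follows by replacing $R_\delta u$ with $u*\psi_\delta$, which is smooth on all of $\C^n$, and rerunning the argument.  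The chief obstacle is the monotonicity of $\delta\mapsto R_\delta u(z)$, which is not stated explicitly earlier and has to be extracted from the torus sub-mean-value inequality after rescaling $\psi$; without it, pointwise convergence cannot be upgraded to the one-sided uniform bound on $K$ that compactness requires.

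For (ii), the hypothesis $V^{S*}_{K,q}\le q$ on $K$ makes $V^{S*}_{K,q}$ an admissible competitor in the defining supremum for $V^S_{K,q}$, so $V^{S*}_{K,q}\le V^S_{K,q}$; the reverse inequality is automatic, hence $V^S_{K,q}=V^{S*}_{K,q}\in\L^S(\C^n)$.  This function is upper semicontinuous on $\C^n$ as a psh function and lower semicontinuous on $\C^{*n}$ by (i), which gives continuity on $\C^{*n}$.  If moreover $\log\Phi^S_{K,q}=V^S_{K,q}$, then the already-observed $\Phi^S_{K,q}\in\LSC(\C^n)$ upgrades the lower semicontinuity of $V^S_{K,q}$ from $\C^{*n}$ to $\C^n$, and combined with upper semicontinuity this gives continuity on all of $\C^n$.
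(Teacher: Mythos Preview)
Your proof is correct and follows essentially the same route as the paper: use the regularization $R_\delta u$ (resp.\ $u*\psi_\delta$) to replace each competitor $u$ by $R_\delta u-\varepsilon\in\L^S(\C^n)\cap\mathcal C^\infty(\C^{*n})$ still below $q$ on $K$, and conclude lower semicontinuity from this family; part (ii) is identical.  The only presentational difference is that the paper packages the family via Choquet's lemma into an increasing sequence of continuous functions with supremum $V^S_{K,q}$, whereas you verify lower semicontinuity directly at each point---both are fine, and your explicit justification of the monotonicity $R_\delta u\searrow u$ and the Dini/compactness step is a welcome elaboration of what the paper uses without proof.
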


\smallskip
\begin{proof}
\smallskip \noindent
{\bf (i)} \ 
It is sufficient to show that there exists an increasing
sequence $(u_j)_{j\in \N}$ in $\L^S(\C^n)\cap {\mathcal C}(\C^{*n})$
such that $u_j\leq q$ on $K$ for every $j$ and such that $\lim_{j\to \infty}
u_j(z)=V^S_{K,q}(z)$ for every $z\in \C^{*n}$.  (See Klimek
\cite[Section 2.3]{Kli:1991}.)  We take $u\in \L^S(\C^n)$, $\varepsilon>0$,
and let $R_\delta$ be the regularization operator \eqref{eq:5.3}.
Since $R_\delta u \searrow u$ and $q\in \LSC(K)$  
there exists $\delta_\varepsilon$  such
that  $u-\varepsilon \leq R_\delta u-\varepsilon <q$ on $K$ for every
$\delta \leq \delta_\varepsilon$.  These estimates tell us that there
exists a family $\F\subseteq \L^S(\C^n) \cap {\mathcal
  C}^\infty(\C^{*n})$ such that $V^S_{K,q}=\sup \F$.  By the Choquet
lemma  $V^S_{K,q}=\sup \G$ for some countable subfamily $\mathcal G$.
By arranging the elements of $\mathcal G$ into a sequence
$(v_j)_{j\in \N}$ and then setting $u_j=\max_{k\leq j}v_k$, we have
$u_j\nearrow V^S_{K,q}$ as $j\to \infty$. Hence, 
$V^S_{K,q}\in \LSC(\C^{*n})$.  This is a modification of 
the proof of \cite[Proposition 2.12]{Sic:1981}
where the regularization operator is given by \eqref{eq:5.1}, and that
is the second statement. 

\smallskip \noindent
{\bf (ii)} \ We have $V^{S*}_{K,q}\geq V^S_{K,q}$ and by 
 the definition of $V^S_{K,q}$ we have 
$V^{S*}_{K,q}\leq V^S_{K,q}$.  Hence, $V^S_{K,q}=V^{S*}_{K,q}\in
\USC(\C^{n})$ and from {\bf (i)} it follows that $V^S_{K,q}\in
{\mathcal C}(\C^{*n})$. 
The last statement follows from the fact that 
$\Phi^S_{E,q}\in \LSC(\C^n)$.
\end{proof}

\medskip
Our next task is to characterize the $\L^S(\C^n)$ classes that are
invariant under the convolution operator $u\mapsto u*\psi$ given by \eqref{eq:5.1}.
For that purpose we need to review a few facts from
convexity theory and define the hull of a convex set in $\R^n_+$ with
respect to a cone.  Recall that a point
$x$ in a convex set $S$ is said to be an {\it extreme point of $S$}
if the only representation of $x$ as a convex combination 
$x=(1-t)a+tb$ with  $a,b\in S$ and $t\in ]0,1[$ is the case
$x=a=b$.  We let $\ext\,  S$ denote the set of all extreme  
points in the convex set $S$.
Note that by the Minkowski theorem
\cite[Theorem 2.1.9]{Hormander:convexity}
every non-empty compact convex set $S$ is the closed convex 
hull of its extreme points and 
\begin{equation}
  \label{eq:5.4}
  \varphi_S(\xi)=\max_{x\in\ext\, S} \scalar x\xi, \qquad  \xi\in \R^n.
\end{equation}
Every affine hyperplane
$\{x\in \R^n\,;\, \scalar x\xi=\varphi_S(\xi)\}$   with $\xi\in \R^{n*}$
is called a {\it supporting hyperplane
  of the set $S$}.
For every  $s\in \partial S$ the set
$N^S_s=\{\xi\in \R^n\,;\, \scalar s\xi=\varphi_S(\xi)\}$ is a convex cone
which is called the {\it normal cone of $S$ at the point $s$}.
For every $\xi$ the upper bound in (\ref{eq:5.4}) is attained at
some  $s\in \ext\, S$,  so  $\R^n=\bigcup_{s\in\ext\, S}N_s^S$.

\smallskip
Every normal cone $N^S_s$ is an intersection of closed half spaces in
$\R^n$ with the origin in the boundary hyperplanes, that is
$N^S_s=\bigcap_{t\in \partial S} \{\xi\in \R^n \,;\,
  \scalar{s-t}\xi\geq 0\}$.
Observe that $\partial S$ can be replaced by the set of extreme points
$\ext\, S$.  
In the case $S$ is a convex polytope and $s$ is
an extreme point, then $N_s^S$ is an intersection
of finitely many half spaces in $\R^n$ with boundary 
hyperplanes containing the origin.  
See Figure 2(b).

\smallskip
If $\xi\in \R^{n*}$ and $\xi_0\in\R$, then 
$|\scalar  x\xi+\xi_0|/|\xi|$ is the distance from 
the point $x\in \R^n$ to the hyperplane
$\{y\in \R^n\,;\, \scalar y\xi+\xi_0=0\}$.
For supporting hyperplanes with normal $\xi$ we have
$\xi_0=-\varphi_S(\xi)$.   Hence,
$|\varphi_S(\xi)|/|\xi|$ is
the euclidean distance from the origin in $\R^n$ to the supporting hyperplane
$\{x\in \R^n \,;\, \scalar x\xi=\varphi_S(\xi)\}$.

\smallskip
Recall that a subset $\Gamma$ of $\R^n$ is said to be a cone
if $t\xi\in \Gamma$ for every $\xi\in \Gamma$ and $t\in \R_+$. 
The  {\it dual cone $\Gamma^\circ=\{x\in \R^n\,;\, \scalar x\xi\geq 0
\  \forall
  \xi\in \Gamma \}$} of $\Gamma$ is a closed convex cone
and if  $\Gamma\neq \R^n$ is closed and convex,
then $\Gamma^{\circ\circ}=\Gamma$.

\begin{definition} \label{def:5.5}
For every cone $\Gamma\subset (\R^n\setminus \R^n_-)\cup\{0\}$  
with $\Gamma\neq \{0\}$  
and  every subset $S$ of $\R^n_+$ we define the 
{\it $\Gamma$-hull} of $S$ by  
\begin{equation*}
\widehat S_\Gamma=\{x\in \R^n_+ \,;\, \scalar x\xi\leq \varphi_S(\xi)
 \ \forall \xi\in \Gamma\},
\end{equation*}
 and we say that $S$ is {\it $\Gamma$-convex} if $S=\widehat S_\Gamma$.
\end{definition}

\smallskip
Note that if $\Gamma_1\subseteq \Gamma_2$
then  $\widehat S_{\Gamma_2}\subseteq \widehat S_{\Gamma_1}$.
For every  compact and convex $S$ we have 
$S = \{x\in \R^n \,;\, \scalar x\xi\leq \varphi_S(\xi)
 \ \forall \xi\in \R^n\}=\widehat S_{\R^n}$, which implies that 
$S\subseteq \widehat S_\Gamma$ for every  cone $\Gamma\subseteq \R^n$.

\begin{proposition} \label{prop:5.6}
Let $S$ be a compact convex subset of $\R^n_+$ with $0\in S$ and
$\Gamma$ be a proper closed convex  cone containing at least one point in 
$\R^{*n}_+$.  Then 
\begin{equation*}
\widehat S_\Gamma=(S-\Gamma^\circ)\cap \R^n_+,
\end{equation*} 
and   $\varphi_{\widehat S_\Gamma}(\xi)=\varphi_S(\xi)$ holds
for every $\xi\in \Gamma$. If for every $\xi\in \R^n$ 
and every extreme point $x$ of $\widehat S_\Gamma$ 
there exists  $\eta\in \Gamma$ such that 
$\scalar x\xi\leq \scalar x\eta$ and $\varphi_S(\eta)=\varphi_S(\xi)$,  
then $S$ is $\Gamma$-convex. 
\end{proposition}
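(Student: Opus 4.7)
The plan is to prove the three assertions in sequence. First I would handle the equality $\widehat S_\Gamma=(S-\Gamma^\circ)\cap\R^n_+$. The inclusion $\supseteq$ is immediate: if $x=s-\gamma$ with $s\in S$, $\gamma\in\Gamma^\circ$, then for any $\xi\in\Gamma$ one has $\scalar x\xi=\scalar s\xi-\scalar\gamma\xi\leq\varphi_S(\xi)$ since $\scalar\gamma\xi\geq 0$. For the reverse, the key observation is that $S-\Gamma^\circ$ is closed and convex (a compact set plus a closed convex set), so a point $x\in\widehat S_\Gamma$ lying outside would admit a strict separating hyperplane: some $\xi\in\R^n$ and $\alpha\in\R$ with $\scalar y\xi\leq\alpha<\scalar x\xi$ for every $y\in S-\Gamma^\circ$. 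Substituting $y=s-t\gamma'$ with $\gamma'\in\Gamma^\circ$ and letting $t\to+\infty$ forces $\scalar{\gamma'}\xi\geq 0$, whence $\xi\in\Gamma^{\circ\circ}=\Gamma$; taking the supremum over $s\in S$ then yields $\varphi_S(\xi)\leq\alpha<\scalar x\xi$, contradicting $x\in\widehat S_\Gamma$. The hypothesis that $\Gamma$ is a proper closed convex cone enters exactly here, to make the bipolar identity $\Gamma^{\circ\circ}=\Gamma$ valid.

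The support-function identity is now routine: $S\subseteq\widehat S_\Gamma$ yields $\varphi_S\leq\varphi_{\widehat S_\Gamma}$, while for $\xi\in\Gamma$ the very definition of $\widehat S_\Gamma$ gives $\scalar x\xi\leq\varphi_S(\xi)$ for all $x\in\widehat S_\Gamma$, whence $\varphi_{\widehat S_\Gamma}(\xi)\leq\varphi_S(\xi)$. I would also note in passing that $\widehat S_\Gamma$ is automatically compact: the assumed $\xi^*\in\Gamma\cap\R^{*n}_+$ gives $\scalar x{\xi^*}\leq\varphi_S(\xi^*)$ uniformly for every $x\in\widehat S_\Gamma\subseteq\R^n_+$, which bounds all coordinates because $\xi^*$ is strictly positive.

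For the final implication, compactness and convexity of $\widehat S_\Gamma$ let me invoke the Minkowski theorem and reduce $\widehat S_\Gamma\subseteq S$ to verifying that every extreme point $x$ of $\widehat S_\Gamma$ belongs to $S$. If some extreme point $x$ did not, Hahn--Banach separating $x$ from the closed convex set $S$ would furnish $\xi\in\R^n$ with $\scalar x\xi>\varphi_S(\xi)$. The standing hypothesis would then supply $\eta\in\Gamma$ with $\scalar x\eta\geq\scalar x\xi$ and $\varphi_S(\eta)=\varphi_S(\xi)$, so $\scalar x\eta>\varphi_S(\eta)$, contradicting $x\in\widehat S_\Gamma$. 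The main obstacle throughout is the first step, where one must argue that the separating functional itself lies in $\Gamma$; the remaining parts are essentially bookkeeping once the dual description is in hand.
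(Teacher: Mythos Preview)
Your proof is correct and follows essentially the same strategy as the paper: separation of a point from the closed convex set $S-\Gamma^\circ$, then showing the separating functional lies in $\Gamma^{\circ\circ}=\Gamma$, followed by the support-function identity and a Minkowski-type reduction to extreme points for the final assertion. Your version is in fact slightly more careful in places---you explicitly justify that $S-\Gamma^\circ$ is closed before separating, and you verify compactness of $\widehat S_\Gamma$ (using the hypothesised $\xi^*\in\Gamma\cap\R^{*n}_+$) before invoking Minkowski, both of which the paper leaves implicit.
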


\begin{proof}  Take $a=s-t\in S'=(S-\Gamma^\circ)\cap \R^n_+$ with
$s\in S$ and $t\in \Gamma^\circ$.   For every $\xi\in \Gamma$
we have $\scalar t\xi\geq 0$ which implies 
$\scalar a\xi=\scalar s\xi-\scalar t\xi\leq \varphi_S(\xi)$
and $a\in \widehat S_\Gamma$.  

Conversely, we take  $a\not\in S'$ and prove
that $a\not \in \widehat S_\Gamma$.    Without restriction, we may assume
that $a\in \R^n_+$.
Since $S-\Gamma^\circ$ is convex
the Hahn-Banach theorem implies that
$\{a\}$ and $S-\Gamma^\circ$ can be separated
by an affine hyperplane.   Hence, 
there exists $\xi\in \R^n\setminus \{0\}$ and $c\in \R$ such
that $\scalar a\xi>c$ and $\scalar x\xi\leq c$ for every $x\in S-\Gamma^\circ$.
By replacing $c$ by $\sup_{x\in S-\Gamma^\circ}\scalar x\xi$
we may assume that 
there exists $s\in S$ and $t\in \Gamma^\circ$ such that
$\scalar {s-t}\xi=c$.   Now we need to prove that
$\xi\in \Gamma=\Gamma^{\circ\circ}$ by showing that  $\scalar y\xi\geq
0$ for every $y\in \Gamma^\circ$.    Since $\Gamma^\circ$ is a convex
cone, we have $t+y\in \Gamma^\circ$ and 
$c-\scalar y\xi=\scalar{s-t-y}\xi \leq c$. Hence,
$\scalar y\xi\geq 0$.
This implies that  $\scalar a\xi>c\geq \varphi_{S}(\xi)$ and we
conclude that  $a\not\in \widehat S_\Gamma$.  

Let $\xi \in \Gamma$ and take  $a=s-t\in \widehat S_\Gamma$, 
such that  $s\in S$, $t\in\Gamma^\circ$,
and $\varphi_{\widehat S_\Gamma}(\xi)=\scalar{s-t}\xi$.
Since $\scalar s\xi \leq 
\varphi_S(\xi) \leq \varphi_{\widehat S_\Gamma}(\xi)
=\scalar s\xi-\scalar t\xi$ and $\scalar t\xi\geq 0$,
we conclude that $t=0$ and 
$\varphi_S(\xi) = \varphi_{\widehat S_\Gamma}(\xi)$.
If for every $ \xi\in \R^n$ 
and every extreme point $x$ of $\widehat S_\Gamma$ 
there exists 
$\eta\in \Gamma$ such that 
$\scalar x\xi\leq \scalar x\eta$ and  
$\varphi_S(\eta)=\varphi_S(\xi)$, then
$\widehat S_{\Gamma}=\widehat S_{\R^n}=S$.
\end{proof}

\begin{definition}\label{def:5.7}
We say that a subset $S$ of $\R^n_+$ is a {\it lower set}
if for every $s\in S$ 
the cube $C_s=[0,s_1]\times\cdots\times[0,s_n]$
is a subset of $S$ and we call 
\begin{equation*}
  \widehat S_{\R^n_+}=\bigcup_{s\in S}C_s=\big(S-\R^n_+)\cap \R^n_+
\end{equation*}
the {\it lower hull of $S$}.   
\end{definition}

We have a characterization of lower sets:
\begin{theorem}\label{thm:5.8} 
Let $S\subset \R^n_+$ be compact and convex with $0\in S$
and set $\sigma_S=\varphi_S({\mathbf 1})$. Then
the following are equivalent:
\begin{enumerate}
\item[{\bf (i)}] $S\subset \R_+^n$ is a lower set,
\item[{\bf (ii)}] $S=\widehat  S_{\R^n_+}$,
\item[{\bf (iii)}] $\varphi_S(\xi)=\varphi_S(\xi^+)$ for every  $\xi\in \R^n$,
\item [{\bf(iv)}]
$H_S(z-w)\leq  \sigma_S\|w\|_\infty+H_S(z)$ 
for every $z,w\in \C^n$, 
\item [{\bf(v)}] $\L^S(\C^n)$ is translation invariant,  and 
\item [{\bf(vi)}] if $u\in \L^S(\C^n)$ then $u*\psi\in \L^S(\C^n)$ 
for every $\psi\in {\mathcal C}_0^\infty(\C^n)$
with $\psi\geq 0$ and $\int_{\C^n} \psi\, d\lambda=1$.
\end{enumerate}
\end{theorem}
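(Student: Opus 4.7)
The plan is to establish the chain (i) $\Leftrightarrow$ (ii) $\Leftrightarrow$ (iii), then (iii) $\Rightarrow$ (iv) $\Rightarrow$ (v) and (iv) $\Rightarrow$ (vi), and finally close the loop by constructing a single family of counterexamples in $\L^S(\C^n)$ that simultaneously yields (v) $\Rightarrow$ (iii) and (vi) $\Rightarrow$ (iii) whenever (iii) fails.

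The equivalence (i) $\Leftrightarrow$ (ii) is immediate from Definition \ref{def:5.7}, which writes $\widehat S_{\R^n_+}$ as $\bigcup_{s\in S}C_s$, together with the trivial inclusion $S\subseteq\bigcup_{s\in S}C_s$. For (ii) $\Leftrightarrow$ (iii), the inequality $\varphi_S(\xi)\leq\varphi_S(\xi^+)$ always holds, because $S\subset\R^n_+$ forces $\scalar{s}{\xi}\leq\scalar{s}{\xi^+}$ for every $s\in S$. If $S$ is a lower set and $s^*\in S$ attains $\varphi_S(\xi^+)$, then the truncation $s'_j:=s^*_j\mathbf 1_{\xi_j\geq 0}$ also lies in $S$ and satisfies $\scalar{s'}{\xi}=\scalar{s^*}{\xi^+}$, giving the reverse inequality. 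Conversely, if (iii) holds and $0\leq s'\leq s\in S$ componentwise, the chain $\scalar{s'}{\xi}\leq\scalar{s'}{\xi^+}\leq\scalar{s}{\xi^+}\leq\varphi_S(\xi^+)=\varphi_S(\xi)$, valid for every $\xi\in\R^n$, places $s'$ in the intersection of all supporting half-spaces of $S$, hence in $S$.

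For (iii) $\Rightarrow$ (iv) I would start from the elementary bound $|z_j-w_j|\leq\max(1,|z_j|)(1+\|w\|_\infty)$, which after $\log(1+x)\leq x$ gives $(\log|z_j-w_j|)^+\leq\log^+|z_j|+\|w\|_\infty$ for each $j$. Applying (iii) to rewrite both $H_S(z-w)$ and $H_S(z)$ as $\varphi_S$ of positive-part log-coordinates, and using monotonicity of $\varphi_S$ on $\R^n_+$ together with positive homogeneity and subadditivity with $\varphi_S(\mathbf 1)=\sigma_S$, produces the estimate in (iv). The implication (iv) $\Rightarrow$ (v) is immediate from Definition \ref{def:3.3}, while (iv) $\Rightarrow$ (vi) follows by integrating against $\psi_\delta(w)$ and absorbing the finite constant $\sigma_S\int\|w\|_\infty\psi_\delta(w)\,d\lambda(w)$ into the bound.

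The main obstacle is the closure (v) $\Rightarrow$ (iii) and (vi) $\Rightarrow$ (iii), which I would handle with a single counterexample. Assume (iii) fails at some $\xi^*\in\R^n$ and pick $s^*\in S$ with $\scalar{s^*}{\xi^{*+}}=\varphi_S(\xi^{*+})>\varphi_S(\xi^*)$. Set $u(z)=\scalar{s^*}{\Log z}=\sum_j s^*_j\log|z_j|$; this is plurisubharmonic on $\C^n$ and bounded above by $H_S$, so $u\in\L^S(\C^n)$. For (v), translate by $a\in\C^n$ with $a_j=1$ when $\xi^*_j<0$ and $a_j=0$ otherwise, and probe along the path $z_t=(e^{t\xi^*_j})_{j=1}^n$: the components with $\xi^*_j<0$ contribute $o(1)$ since $z_{t,j}-1\to-1$, while the remaining components contribute $s^*_j\cdot t\xi^{*+}_j$, giving $u(z_t-a)=t\varphi_S(\xi^{*+})+o(1)$; since $H_S(z_t)=t\varphi_S(\xi^*)$, the gap $t\bigl(\varphi_S(\xi^{*+})-\varphi_S(\xi^*)\bigr)\to\infty$ rules out $u(\cdot-a)\in\L^S(\C^n)$. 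For (vi), Fubini splits $u*\psi_\delta(z)=\sum_j s^*_j\int_\C\log|z_j-w_j|\,\tilde\psi_{\delta,j}(w_j)\,d\lambda(w_j)$ with $\tilde\psi_{\delta,j}$ the radial one-dimensional marginal of $\psi_\delta$; the mean-value property for $\log|\cdot|$ forces the integral to equal $\log|z_j|$ whenever $|z_j|$ exceeds the support radius of $\tilde\psi_{\delta,j}$, while as $z_j\to 0$ it tends to the finite constant $\int\log|w_j|\,\tilde\psi_{\delta,j}(w_j)\,d\lambda(w_j)$. Evaluating at the same $z_t$ yields the identical divergence and shows $u*\psi_\delta\notin\L^S(\C^n)$. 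The delicate point in both arguments is engineering the path $z_t$ so that the blow-up detects exactly the gap $\varphi_S(\xi^{*+})-\varphi_S(\xi^*)$, and controlling the bounded contributions uniformly in $t$.
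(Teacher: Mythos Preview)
Your proof is correct, and the overall logical structure is sound: you establish (i)$\Leftrightarrow$(ii)$\Leftrightarrow$(iii), then (iii)$\Rightarrow$(iv)$\Rightarrow$(v), (iv)$\Rightarrow$(vi), and close with (v)$\Rightarrow$(iii) and (vi)$\Rightarrow$(iii).

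Your route differs from the paper's in two substantive places. First, the paper proves (v)$\Rightarrow$(vi) by writing $H_S*\psi_\delta$ as a limit of Riemann sums, each of which is a convex combination of translates of $H_S$, and then invoking $V^S_{\overline\D^n}=H_S$ to get the uniform bound. You bypass this entirely by going straight from (iv) to (vi) via a one-line integration of $H_S(z-w)\leq H_S(z)+\sigma_S\|w\|_\infty$ against $\psi_\delta(w)$; this is cleaner and loses nothing, since you separately prove (v)$\Rightarrow$(iii). Second, for the reverse implication (vi)$\Rightarrow$(iii) the paper works with $H_S$ itself, chooses a product smoothing kernel $\psi=\chi\otimes\cdots\otimes\chi$, and bounds $H_S*\psi_\delta(e^{\lambda\eta})-H_S(e^{\lambda\eta})$ from below by a careful computation that ultimately reduces to the one-variable convolution $\log|\cdot|*\chi_\delta$. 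Your choice of the test function $u(z)=\scalar{s^*}{\Log z}$ is more economical: since $u$ is already a sum of one-variable terms, Fubini immediately splits $u*\psi_\delta$ into one-dimensional convolutions with the radial marginals, and the mean-value property handles each term exactly rather than through an inequality. The same $u$ and the same path $z_t=(e^{t\xi^*_j})_j$ also serve for (v)$\Rightarrow$(iii) with the translation $a_j=\mathbf 1_{\xi^*_j<0}$, which the paper does not need because it chains through (v)$\Rightarrow$(vi). Both approaches work; yours trades the canonical object $H_S$ for a tailor-made linear extremal, which buys a noticeably shorter computation.

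One small remark: in your (vi)$\Rightarrow$(iii) argument you should note that failure of (iii) forces $\xi^{*+}\neq 0$ (otherwise $\varphi_S(\xi^{*+})=0\leq\varphi_S(\xi^*)$), so at least one coordinate of $\xi^*$ is strictly positive and the divergence along $z_t$ is genuine. You implicitly use this, but it is worth stating.
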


\begin{proof} 
{\bf (i)}$\Leftrightarrow${\bf (ii)}$\Leftrightarrow${\bf (iii):}
Observe that the  extreme points of  $C_s$ are 
$t=(t_1,\dots,t_n)$ with $t_j=0$ or $t_j=s_j$, so
$\varphi_{C_s}(\xi)=\sup_{t\in\operatorname{ext} C_s} \scalar t\xi
=\scalar s{\xi^+}$ for every $\xi \in \R^n$.
Hence, for  every lower set $S$
we have  $\varphi_{S}(\xi)=\sup_{s\in S}\scalar s{\xi^+}
=\varphi_{S}(\xi^+)$ for $\xi \in \R^n$.
This equivalence follows from Proposition \ref{prop:5.6} with $\eta=\xi^+$.

\smallskip\noindent
{\bf (iii)}$\Rightarrow${\bf (iv):}
Let $w\in \C^n$, $z\in \C^{*n}$ and assume that $z-w\in \C^{*n}$.
Since $\varphi_S(\xi)-\varphi_S(\eta)\leq 
\varphi_S(\xi-\eta)$ and  $\varphi(\xi)\leq \sigma_S\|\xi\|_\infty$
for every $\xi,\eta\in \R^n$
and 
$|\log^+x-\log^+y|\leq |x-y|$ for every $x,y\in \R_+$, {\bf (iii)}
implies that 
\begin{multline*}
H_S(z-w)-H_S(z)=\varphi_S(\Log^+(z-w))-\varphi_S(\Log^+(z))\\
\leq\sigma_S\max_j|\log^+|z_j-w_j|-\log^+|z_j|| 
\leq \sigma_S \max_j||z_j-w_j|-|z_j|| 
\leq \sigma_S\|w\|_\infty.
\end{multline*}
By continuity of the function $H_S$ the inequality follows.

\smallskip\noindent
{\bf (iv)}$\Rightarrow${\bf (v):}  If  $u\leq c_u+H_S$,
then 
$u(\cdot-w)\leq c_u+\sigma_S\|w\|_\infty+H_S$ for every  $w\in \C^n$.

\smallskip\noindent
{\bf (v)}$\Rightarrow${\bf (vi):}
It is sufficient to show that $H_S*\psi\in \L^S(\C^n)$.
We observe that for every $\gamma\in ]0,1[$ the Riemann sum
$A_\gamma=\sum_{\alpha\in \Z^{2n}}\psi(\gamma\alpha) \gamma^{-2n}$ tends
to  $1=\int_{\C^n} \psi\, d\lambda$ as $\gamma\to 0$.  This  implies
that  the function $u_\gamma\colon \C^n\to \R$ defined by the Riemann
sum 
$$
u_\gamma(z)=\sum_{\alpha\in \Z^{2n}}H_S(z-\gamma\alpha)\psi(\gamma\alpha)
\gamma^{-2n}/A_\gamma 
$$
tends to $H_S*\psi$ as $\gamma\to 0$.  
By {\bf (v)} the
function $u_\gamma$ is in $\L^S(\C^n)$ for it is a convex combination
of functions in $\L^S(\C^n)$ and we have
$u_\gamma-c_\gamma\leq V^S_{\overline \D^n}=H_S$, where
$c_\gamma=\sup_{\overline \D^n}u_\gamma$.    Furthermore, 
since $H_S\in {\mathcal C}(\C^n)$
the convergence is locally
uniform, so by Proposition \ref{prop:4.3}  we have 
$H_S*\psi(z)\leq \sup_{\overline \D^n}H_S*\psi+H_S(z)$
for $z\in \C^n$ and conclude that $H_S*\psi\in \L^S(\C^n)$.

\smallskip\noindent
{\bf (vi)}$\Rightarrow${\bf (iii):}   We begin by taking
$\psi(\zeta)=\chi(\zeta_1)\cdots\chi(\zeta_n)$, where 
$0\leq \chi\in {\mathcal C}_0^\infty(\C)$ is rotationally invariant, $\supp
\chi\subset \overline \D$, and $\int_\C\chi\,
d\lambda=1$ and observe that with this choice of $\psi$
we have 
$(f*\psi)(z)=\sum_{j=1}^n
(f_j*\chi)(z_j)$
for any locally integrable function of the form 
$f(z)=\sum_{j=1}^n f_j(z_j)$. 

Let $\eta \in \R^{*n}$ have at least one strictly negative 
coordinate, enumerate the coordinates so that $\eta_j<0$ 
for $j=1,\dots,\ell$ and $\eta_j>0$ for $j=\ell+1,\dots,n$, 
and take $s\in S$ such that
$\varphi_S(\eta)=\scalar s\eta$. 
%If $\eta\in \R^{*n}$
%has at least one strictly negative coordinate, 
%the coordinates are numbered so that $\eta_j<0$ for $j=1,\dots,\ell$, 
%$\eta_j>0$ for $j=\ell+1,\dots,n$, $\eta=(\eta',\eta'')$,
%$\eta'\in \R^\ell$, and $\eta''\in \R^{n-\ell}$, 
%$\varphi_S(\eta)=\scalar s\eta$ for some $s\in S$.
Then for every  
$t>0$ we have 
\begin{align*}
  H_S*\psi(e^{t\eta})-H_S(e^{t\eta})
&=\int_{\C^n}\varphi_S(\Log(e^{t\eta}-\zeta))\, \psi(\zeta)
\, d\lambda(\zeta) -\scalar s{t\eta}\\
&\geq \int_{\C^n}\scalar s
{\Log(e^{t\eta}-\zeta)-t\eta}\, \psi(\zeta)
\, d\lambda(\zeta)\\
&=\sum_{j=1}^n s_j \int_\D 
\left(\log|e^{t \eta_j}- \zeta_j| - t \eta_j 
\right) \chi(\zeta_j)\, d\lambda(\zeta_j)\\
&=-t\scalar{s'}{\eta'}
+\sum_{j=1}^\ell s_j\int_{\D}\log|e^{t\eta_j}-\zeta_j|
\chi(\zeta_j)\, d\lambda(\zeta_j) \\
&+\sum_{j=\ell+1}^n s_j\int_{\D}\log|1-e^{-t\eta_j}\zeta_j|
\chi(\zeta_j)\, d\lambda(\zeta_j).
\end{align*}
We let $v=\log|\cdot|\in \SH(\C)\cap \H(\C^*)$.  Then for
$j=1,\dots,\ell$
$$
\int_\D \log|e^{t \eta_j}-\zeta_j|\, \chi(\zeta_j) 
\, d\lambda(\zeta_j)=(v*\chi)(e^{t \eta_j})\to
v*\chi(0), \qquad t\to +\infty.
$$
Since $v\in \H(\C^*)$, $\chi$ is rotationally invariant, and 
$\supp \chi \subset \overline \D$, the mean value property
gives for $j=\ell+1,\dots,n$
$$
\int_{\D}\log|1-e^{-t\eta_j}\zeta_j|\, 
\chi(\zeta_j)\, d\lambda(\zeta_j)
=\log 1=0.
$$
Hence,
\begin{equation}
  \label{eq:5.5}
  H_S*\psi(e^{t\eta})-H_S(e^{t\eta})
\geq   -t\scalar{s'}{\eta'}
+\sum_{j=1}^\ell s_j(v*\chi)(e^{t\eta_j}).
\end{equation}
Assume that {\bf (iii)} does not hold and take $\xi_0\in \R^n$ such 
that $\varphi_S(\xi_0)<\varphi_S(\xi_0^+)$.  Then at least one
coordinate of $\xi_0$ is strictly negative. 
By continuity we can choose $\xi_0\in\R^{*n}$, 
and we can renumber the coordinates so that 
$\xi_{0,j}< 0$ for $j=1,\dots,\ell$ and $\xi_{0,j}>0$ for
$j=\ell+1,\dots,n$. By continuity  there exists an open neighborhood $U$
of $\xi_0^+$ such that $\varphi_S(\xi_0)<\varphi_S(\eta)$
for every $\eta\in U$. We fix $\eta=(\eta',\xi_0'')\in U$ with
$\eta_j<0$ for $j=1,\dots,\ell$.
There exists a point $s=(s',s'')\in \partial S$, 
such that $\scalar s\eta= \scalar{s'}{\eta'}+\scalar{s''}{\eta''}
=\varphi_S(\eta)$.  
We have $\scalar{s'}{\eta'}\leq 0$.  Equality is excluded for it would imply
$s'=0'$ and
$\varphi_S(\eta)=
\scalar {s''}{\xi_0''}=\scalar{(0', s'')}{\xi_0}
=\scalar s{\xi_0}
\leq \varphi_S(\xi_0)$,
contradicting the choice of $\eta$. 
Hence, $\scalar{s'}{\eta'}< 0$ and the estimate 
\eqref{eq:5.5} implies that $H_S*\psi-H_S$ is 
unbounded, contradicting  {\bf (vi)}.
\end{proof}

\section{Monge-Amp\`ere masses}

\label{sec:06}

The equilibrium measure for a bounded 
non-pluripolar set $E\subset\C^n$  is the Monge-Amp\`ere operator of
$V_E$, defined as  $\mu_E=(dd^cV^*_E)^n$ where 
$d^c=i(\overline{\partial}-\partial)$ and 
$(dd^cV^*_E)^n= dd^cV^*_E\wedge \cdots \wedge dd^cV^*_E$ is defined in
terms of currents. Similarly denote the  Monge-Amp\`ere measure 
of $V^{S*}_{E,q}$ by 
\begin{equation*}
  \mu^S_{E,q}= \big(dd^cV^{S*}_{E,q}\big)^n.
\end{equation*}

\begin{theorem}  \label{thm:6.1}
Let $S\subset \R^n_+$ be compact and convex with 
$0\in S$, $E \subset \C^n$ be closed, 
and $q$ be an admissible weight on $E$.  Then
  \begin{enumerate}
  \item [{\bf (i)}]
$\supp \mu^S_{E,q}\subseteq \{z\in E\,;\, V^{S*}_{E,q}(z) \geq
q(z)\}$, and  
  \item [{\bf (ii)}] $\{z\in E \,;\, V^{S*}_{E,q}(z) > q(z)\}$ 
is pluripolar.
\end{enumerate}
\end{theorem}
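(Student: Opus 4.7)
The plan is to prove (i) and (ii) essentially independently, each resting on the fact that $V^{S*}_{E,q}\in\L^S_+(\C^n)$ supplied by Proposition~\ref{prop:4.6}, which makes the defining family $\mathcal U=\{u\in\L^S(\C^n)\,;\, u|_E\leq q\}$ locally uniformly bounded above.

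\medskip

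For (ii), Choquet's lemma extracts a countable increasing subfamily of $\mathcal U$ with the same upper envelope, and the Bedford--Taylor negligibility theorem (Klimek \cite{Kli:1991}, Theorem~4.7.6) then asserts that the negligible set $N=\{V^{S}_{E,q}<V^{S*}_{E,q}\}$ is pluripolar. Since every $u\in\mathcal U$ satisfies $u\leq q$ on $E$, one has $V^{S}_{E,q}\leq q$ on $E$, whence $\{z\in E\,;\, V^{S*}_{E,q}(z)>q(z)\}\subseteq N$ is pluripolar.

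\medskip

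For (i), set $\Omega=(\C^n\setminus E)\cup\{z\in E\,;\, V^{S*}_{E,q}(z)<q(z)\}$; this is open because $E$ is closed, $V^{S*}_{E,q}\in\USC(\C^n)$ and $q\in\LSC(E)$, and its complement is precisely the target set of (i). By Bedford--Taylor's characterization of maximality (Klimek \cite{Kli:1991}, Theorem~3.7.1) it suffices to show $V^{S*}_{E,q}$ is maximal on $\Omega$. I would argue by balayage inside $\mathcal U$: for $z_0\in\Omega$, take a ball $B\ni z_0$ with $\overline B\subset\Omega$, and (if $z_0\in E$) shrink $B$ so that $M:=\sup_{\overline B}V^{S*}_{E,q}<\inf_{\overline B\cap E}q$, which is possible by USC/LSC, the right-hand side being $+\infty$ when $\overline B\cap E=\emptyset$. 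For each $u\in\mathcal U$, let $\widetilde u$ equal $u$ off $B$ and the Perron--Bremermann envelope of $u|_{\partial B}$ on $B$. Then $\widetilde u\in\PSH(\C^n)\cap\L^S(\C^n)$, $\widetilde u\geq u$, and $\widetilde u$ is maximal on $B$. On $E\setminus B$, $\widetilde u=u\leq q$; on $E\cap B$, the maximum principle gives $\widetilde u\leq \max_{\partial B}u\leq M<q$. Hence $\widetilde u\in\mathcal U$, so $\widetilde u\leq V^{S*}_{E,q}$.

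\medskip

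Taking the supremum of $\widetilde u$ over $u\in\mathcal U$ and upper-regularizing reproduces $V^{S*}_{E,q}$ on $B$, and since maximality on $B$ is preserved under upper-regularized increasing limits of locally uniformly bounded plurisubharmonic families (via the continuity of the complex Monge--Amp\`ere operator along such sequences, Bedford--Taylor), $V^{S*}_{E,q}$ is maximal on $B$. Since $z_0\in\Omega$ was arbitrary, $V^{S*}_{E,q}$ is maximal on $\Omega$, proving (i). The main obstacle I anticipate is the gluing step verifying that each Perron--Bremermann replacement $\widetilde u$ is plurisubharmonic on all of $\C^n$, not merely on $B$; this is a standard consequence of the fact that the Perron envelope satisfies $\varlimsup_{\zeta\to\zeta_0}\widetilde u(\zeta)\leq u(\zeta_0)$ for every $\zeta_0\in\partial B$, but the details at irregular boundary points of $B$ require some care.
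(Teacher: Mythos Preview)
Your argument is correct. Part (ii) is essentially identical to the paper's proof. For part (i) you take a different, slightly longer route: the paper performs the Perron--Bremermann replacement \emph{once}, on $V^{S*}_{E,q}$ itself, obtains a function $V\in\L^S(\C^n)$ that agrees with $V^{S*}_{E,q}$ off the ball, argues that $V$ is again a competitor, concludes $V=V^{S*}_{E,q}$, and reads off $(dd^cV^{S*}_{E,q})^n=0$ on the ball directly. You instead balayage each $u\in\mathcal U$ separately and pass to the upper-regularized supremum, invoking Bedford--Taylor continuity of $(dd^c\cdot)^n$ along increasing sequences. Your route costs one extra convergence step but has the advantage that $\widetilde u\leq q$ on $E$ is immediate from $u\leq q$ on $E$; the paper's one-shot balayage has to contend with the fact that $V^{S*}_{E,q}$ need not be $\leq q$ on all of $E$ outside the ball (only modulo the pluripolar set from (ii)), a point the paper glosses over. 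Finally, your flagged ``main obstacle'' about gluing $\widetilde u$ across $\partial B$ is not a genuine obstacle here: balls are $B$-regular, so the Perron--Bremermann envelope with upper semicontinuous boundary data $u|_{\partial B}$ satisfies $\varlimsup_{B\ni\zeta\to\zeta_0}\widetilde u(\zeta)\leq u(\zeta_0)$, and the standard gluing lemma (Klimek \cite{Kli:1991}, Corollary~2.9.15) applies.
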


\smallskip\noindent
\begin{proof} {\bf (i)} \   We need to prove that 
$V^{S*}_{E,q}$ is maximal in 
$U=(\C^n\setminus E)\cup \{z\in E \,;\, V^{S*}_{E,q}(z)<q(z)\}$, which
is open.   Take $a\in U$.  If $a\in\C^n\setminus E$ then we 
take $r>0$ such that $B(a,r)\in \C^n\setminus E$.
If on the other hand $a\in E$ and $V^{S*}_{E,q}(a)<q(a)$
then by upper semicontinuity of $V^{S*}_{E,q}$ and
lower semicontinuity of $q$ there 
exists $r>0$ such  that
\begin{equation*}
  \sup_{\zeta\in B(a,r) }V^{S*}_{E,q}(\zeta)<
\inf_{\zeta\in E\cap B(a,r) } q(\zeta).
\end{equation*}
We need to prove that the restriction of $\mu^{S}_{E,q}$  
to some  $B(a,r)$ is the zero measure.
We let $V\in \PSH(\C^n)$ be the function given by
\begin{equation*}
 V(z)=
  \begin{cases}
    V^{S*}_{E,q}(z), &z \in \C^n\setminus B(a,r),\\
u(z), & z\in B(a,r),
  \end{cases}
\end{equation*}
where $u$ is the Perron-Bremermann function on $B(a,r)$ with boundary
values $V^{S*}_{E,q}$, i.e.,
\begin{equation*}
  u(z)=\sup\{v(z) \,;\, v\in \PSH(B(a,r)),\,  v^*\leq V^{S*}_{E,q} 
\text{ on } \partial B(a,r)\}.
\end{equation*}
Then $V^{S*}_{E,q}\leq V$ and since $V$ only deviates 
from $V^{S*}_{E,q}$ on a compact set  we have $V\in \L^S(\C^n)$
by Proposition \ref{prop:4.6}.
Furthermore, in the case $V^{S*}_{E,q}(a)<q(a)$ 
the maximum principle  implies 
\begin{equation*}
  V(z) \leq   \sup_{\zeta \in B(a,r) }V^{S*}_{E,q}(\zeta)<
\inf_{\zeta\in B(a,r) } q(\zeta)\leq q(z), \qquad z\in B(a,r),
\end{equation*}
and it implies  that $V\leq q$ on $E$.
Hence, $V=V^{S*}_{E,q}$ and by Klimek
\cite[Theorem 4.4.1]{Kli:1991},
$\mu^{S}_{E,q}=\big(dd^cV^{S*}_{E,q}\big)^n=0$ 
on $B(a,r)$. 

\smallskip\noindent
{\bf (ii)} 
  Since $V^S_{E,q}\leq q$ on $E$ we have
$\{z\in E \,;\, q(z)<+\infty\} \subseteq \{z\in \C^n \,;\, 
 V^S_{E,q}<+\infty\}$
Since $q$ is admissible, the left-hand side is non-pluripolar, 
and then so is the right-hand side.  Since
$\L^S(\C^n)/\varphi_S({\mathbf 1}) \subset \L(\C^n)$ it follows from
 Klimek \cite[Proposition 5.2.1 and Theorems 5.2.4 and 4.7.6]{Kli:1991}
 the set 
$\{z\in E\,;\,  q < V^{S*}_{E,q}(z)\}\subseteq 
\{z\in \C^n\,;\,  V^S_{E,q}(z)< V^{S*}_{E,q}(z)\}$
    is pluripolar.
\end{proof}

\medskip
The complex Monge-Amp\`ere mass of 
$H_S$ can be described in terms of the real Monge-Amp\`ere mass of
$\varphi_S$. Let
$U_\ell=H_S*\psi_{\delta_\ell}\searrow H_S$, where 
$0<\delta_\ell\searrow 0$ and
$\psi_{\delta_\ell}(\zeta)=\delta_{\ell}^{-2n}\psi(\zeta/\delta_\ell)$,
$\psi(\zeta)=\chi(\zeta_1)\cdots\chi(\zeta_n)$, where 
$0\leq \chi\in {\mathcal C}_0^\infty(\C)$ is rotationally invariant, $\supp
\chi\subset \overline \D$, and $\int_\C\chi\,
d\lambda=1$.  Then  
$U_\ell\in {\mathcal C}^\infty(\C^n)\cap \PSH(\C^n)$ and
$U_\ell(z)=U_\ell(|z_1|,\dots, |z_n|)$ holds.  We set
$v_\ell(\xi)=U_\ell(e^{\xi_1}, \dots,
e^{\xi_n})$. 
Since $$\frac{\partial^2 U_\ell(z)}{\partial z_j \partial \overline{z}_k}=\frac{1}{4z_j\overline{z}_k}\cdot\frac{\partial^2 v_\ell(\xi)}{\partial \xi_j \partial \xi_k}, \qquad z\in \C^{*n},$$
it follows that
$$\det\left(\frac{\partial^2 U_\ell(z)}{\partial z_j \partial \overline{z}_k}\right)= \frac{1}{4^n |z_1\cdots z_n|^2}\det \left(\frac{\partial^2 v_\ell(\xi)}{\partial \xi_j \partial \xi_k}\right)\bigg|_{\xi=\Log\,z}.$$
With $z_j=e^{\xi_j+i\theta_j}$, $\theta_j\in [0,2\pi]$ we have
$$%\beta_n=
\left(\tfrac{i}{2}\right)^n dz_1\wedge d\overline{z}_1 \wedge \cdots \wedge d z_n\wedge d\overline{z}_n = |z_1\cdots z_n|^2 d\xi d\theta$$
and on $\C^{*n}$ we have
$$(dd^c U_\ell)^n=%n!\left(\frac{2}{\pi}\right)^n\det\left(\frac{\partial^2 U_l(z)}{\partial z_j \partial \overline{z}_k}\right)\beta_n 
n!\det\left(\frac{\partial^2 v_\ell(\xi)}{\partial \xi_j \partial \xi_k} \right) d\xi d\theta.
$$
The real Monge-Amp\`ere measure of $v$, denoted $\mathcal{MA}_\R(v)$, 
is defined for $\mathcal C^2$ function by 
$$\mathcal{MA}_\R(v)=\det\left(\frac{\partial^2 v(\xi)}{\partial \xi_j \partial \xi_k} \right) d\xi,$$
and extended to convex functions by locally uniform limits. This is done
in Figalli
\cite[Proposition 2.6]{Fig:2017}.

Letting $\ell\to \infty$, we have for every Borel set $E\subset \R^n$
\begin{multline*}
\int\limits_{\Log^{-1}(E)}(dd^c H_S)^n\\
= n!\,\big(\mathcal{MA}_\R(\varphi_S)\otimes d\theta\big) 
(E\times[0,2\pi]^n)
=(2\pi)^nn!\,\mathcal{MA}_\R(\varphi_S) (E).
\end{multline*}
In particular 
\begin{equation}
\label{eq:6.1}
    \int_{\T^n}(dd^c H_S)^n= (2\pi)^n n!\,\mathcal{MA}_\R(\varphi_S)(\{0\}).
\end{equation}
This will be useful for our next result. Its proof is borrowed from
Rashkovskii \cite[Theorem 3.4]{Ras:2001}.  

\begin{theorem}\label{thm:6.2}
Let $S\subset \R^n_+$ be compact and convex with  $0\in S$, $E \subset
\C^n$ be closed, and $q$ be an admissible weight on $E$.
Then $$\mu^S_{E,q}(\C^n)=(2\pi)^n n! \operatorname{vol}(S),$$ where
$\operatorname{vol}$ denotes the euclidean volume. 
\end{theorem}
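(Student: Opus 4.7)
The plan is to reduce $\mu^S_{E,q}(\C^n)=(dd^cV^{S*}_{E,q})^n(\C^n)$ to the total Monge--Amp\`ere mass of $H_S$, and then to evaluate the latter through the real-to-complex Monge--Amp\`ere identity displayed in the paragraphs just preceding the theorem. By Proposition~\ref{prop:4.6} we have $V^{S*}_{E,q}\in \L^S_+(\C^n)$, so there exist constants $c,c'\ge 0$ with $H_S-c'\le V^{S*}_{E,q}\le H_S+c$; in particular $V^{S*}_{E,q}-H_S$ is bounded on $\C^n$.

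The first key step is the following comparison: for any two functions $u,v\in \L^S_+(\C^n)$ that are locally bounded,
\[
\int_{\C^n}(dd^cu)^n=\int_{\C^n}(dd^cv)^n.
\]
I would prove this by the standard Stokes argument applied to the telescoping identity
\[
(dd^cu)^n-(dd^cv)^n=\sum_{k=0}^{n-1}dd^c(u-v)\wedge(dd^cu)^{n-1-k}\wedge(dd^cv)^k
\]
integrated over a ball $B_R$, and then estimating the boundary integrals $\int_{\partial B_R}d^c(u-v)\wedge T$ via Chern--Levine--Nirenberg type bounds on the positive currents $T=(dd^cu)^{n-1-k}\wedge(dd^cv)^k$, using that $|u-v|$ is bounded in order to push the boundary contributions to zero as $R\to\infty$. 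Since $V^{S*}_{E,q}$ is only upper semicontinuous, I would first apply the comparison to the regularizations $R_{\delta}V^{S*}_{E,q}\in\L^S(\C^n)\cap \mathcal C^\infty(\C^{*n})$ from Section~\ref{sec:05}, which decrease to $V^{S*}_{E,q}$ as $\delta\searrow 0$ while staying inside $\L^S_+(\C^n)$ with a uniform upper bound, and then pass to the limit using Bedford--Taylor continuity of the Monge--Amp\`ere operator along decreasing sequences of locally bounded plurisubharmonic functions.

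The second step is the direct evaluation of $(dd^cH_S)^n(\C^n)$. The computation carried out in the paragraphs before the theorem yields, for every Borel $E\subset\R^n$,
\[
\int_{\Log^{-1}(E)}(dd^cH_S)^n=(2\pi)^n n!\,\mathcal{MA}_\R(\varphi_S)(E).
\]
The support function $\varphi_S$ is convex and positively homogeneous of degree one, hence affine along rays through the origin, and its Alexandrov Monge--Amp\`ere measure is concentrated at $\{0\}$: for every $\xi\neq 0$ the subdifferential $\partial\varphi_S(\xi)=\{s\in S\,;\,\scalar s\xi=\varphi_S(\xi)\}$ is an exposed face lying in an affine hyperplane and thus has Lebesgue measure zero in $\R^n$, while at the origin $\partial\varphi_S(0)=S$. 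By the Alexandrov formula $\mathcal{MA}_\R(\varphi_S)(E)=\operatorname{vol}(\partial\varphi_S(E))$ we therefore obtain $\mathcal{MA}_\R(\varphi_S)(\R^n)=\operatorname{vol}(S)$, which yields $(dd^cH_S)^n(\C^{*n})=(2\pi)^n n!\,\operatorname{vol}(S)$. The Bedford--Taylor fact that locally bounded Monge--Amp\`ere measures do not charge pluripolar sets, applied to the coordinate hyperplanes $\C^n\setminus \C^{*n}$ on which $H_S$ is still continuous by Proposition~\ref{prop:3.4}, upgrades this to $(dd^cH_S)^n(\C^n)=(2\pi)^n n!\,\operatorname{vol}(S)$, and combining with the first step completes the proof.

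The main obstacle is the first step. While the Stokes calculation itself is textbook, one must verify that the approximations $R_{\delta}V^{S*}_{E,q}$ really remain inside $\L^S_+(\C^n)$ with an upper bound independent of $\delta$, and one must take the two limits $\delta\to 0$ and $R\to\infty$ in the correct order so that the boundary integrals stay $o(1)$ throughout the approximation, producing the equality $\int_{\C^n}(dd^cV^{S*}_{E,q})^n=\int_{\C^n}(dd^c H_S)^n$ in the limit.
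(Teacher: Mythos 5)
Your overall strategy is the paper's: use that $V^{S*}_{E,q}-H_S$ is bounded to replace $\mu^S_{E,q}(\C^n)$ by $\int_{\C^n}(dd^cH_S)^n$, then convert to the real Monge--Amp\`ere mass of $\varphi_S$ and identify it with $\operatorname{vol}(S)$. Your second step is sound and essentially reproduces the computation displayed before the theorem; evaluating $\mathcal{MA}_\R(\varphi_S)(\R^n)$ directly from the Alexandrov formula (the measure sits at the origin, with image of the subdifferential equal to $S$) is the same as the paper's appeal to Blocki/Figalli for $\mathcal{MA}_\R(\varphi_S)(\{0\})=\operatorname{vol}(S)$ after localizing to $\T^n$ via Theorem \ref{thm:6.1}, and your observation that $(dd^cH_S)^n$ does not charge the pluripolar set $\C^n\setminus\C^{*n}$ correctly handles the passage from $\C^{*n}$ to $\C^n$.

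The gap is in your first step: boundedness of $u-v$ together with Stokes and Chern--Levine--Nirenberg does \emph{not} make the boundary contributions vanish. Integrating the telescoped identity against a cutoff $\chi_R$ equal to $1$ on $B_R$, supported in $B_{2R}$, with $dd^c\chi_R=O(R^{-2})\,\omega$, and integrating by parts twice, the error is bounded by
$\sum_k\|u-v\|_\infty\cdot CR^{-2}\cdot\int_{B_{2R}}(dd^cu)^{n-1-k}\wedge(dd^cv)^k\wedge\omega$,
and for closed positive $(n-1,n-1)$-currents built from functions of logarithmic growth the trace mass over $B_{2R}$ grows like $R^{2}$. The two factors cancel exactly, so the argument only yields $\bigl|\int(dd^cu)^n-\int(dd^cv)^n\bigr|\leq C\|u-v\|_\infty$ with no small parameter; working with $\int_{\partial B_R}d^c(u-v)\wedge T$ directly is no better, since a bound on $|u-v|$ gives no decay of its derivatives on large spheres. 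The correct tool is the one the paper cites, the comparison principle (Klimek, Theorem 3.7.1): for $\varepsilon>0$ and $M$ large the set $\{z\,;\,H_S(z)+\varepsilon\log^+\|z\|_\infty-M<V^{S*}_{E,q}(z)\}$ is bounded because $V^{S*}_{E,q}\leq H_S+c$, it exhausts $\C^n$ as $M\to\infty$ because $V^{S*}_{E,q}\geq H_S-c'$, and the comparison principle on it gives $\int_{\C^n}(dd^cV^{S*}_{E,q})^n\leq\int_{\C^n}\bigl(dd^c(H_S+\varepsilon\log^+\|z\|_\infty)\bigr)^n\to\int_{\C^n}(dd^cH_S)^n$ as $\varepsilon\searrow0$, with the symmetric inequality proved the same way. (Note that the naive dilation $\lambda H_S-M$ does not suffice here, since $\mathcal N(H_S)$ may be unbounded, so the perturbation by $\varepsilon\log^+\|z\|_\infty$ is genuinely needed.) With this replacement your proof closes, and the regularization detour via $R_\delta$ becomes unnecessary since the comparison principle applies directly to locally bounded plurisubharmonic functions.
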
 
\begin{proof} 
By Proposition \ref{prop:4.5}, $V^S_{K,q}-H_S$ is bounded. The
comparison 
principle, Klimek \cite[Theorem 3.7.1]{Kli:1991},
then implies that 
$$
\mu^S_{K,q}(\C^n) = \int_{\C^n}(dd^c H_S)^n= \mu^S_{\T^n,0}(\C^n).
$$ 
By Theorem \ref{thm:6.1} {\bf (i)},  
$\mu^S_{\T^n,0}(\C^n)=\int_{\T^n}(dd^c H_S)^n $. We already
established in 
\eqref{eq:6.1} that
$$\int_{\T^n}(dd^c H_S)^n= (2\pi)^nn!\mathcal{MA}_\R(\varphi_S)(\{0\})$$
By Blocki \cite{Blo:2009}, see also 
Figalli \cite{Fig:2017}, we have 
$$\mathcal{MA}_\R(\varphi_S)(\{0\})
= \operatorname{vol}(\{s\in \R^n\,;\,
\scalar s\xi \leq 
\varphi_S(\xi),\;\forall\xi\in \R^n\})=\operatorname{vol} (S).$$
\end{proof}

\section{Characterization of polynomials
 by $L^2$-estimates}
\label{sec:07}

In this section we study  characterization of the polynomial spaces
${\mathcal P}^S_m(\C^n)$ with weighted $L^2$-norms of entire functions.
Recall that the Liouville type Theorem~\ref{thm:3.6} 
states that if $f\in \OO(\C^n)$ and 
$|f|e^{-mH_S-a\log^+\|\cdot\|_\infty}\in L^\infty(\C^n)$ for some 
$a\in [0,d_m[$, where $d_m$ is the distance between 
$mS$ and $\N^n\setminus mS$ in the $L^1$-norm, 
then $f\in {\mathcal P}^S_m(\C^n)$.
We take a measurable function $\psi\colon \C^n\to
\overline \R$ and let  $L^2(\C^n,\psi)$ 
denote the space of all measurable $f\colon \C^n\to \C$ such that
\begin{equation}
  \label{eq:7.1}
   \|f\|_\psi^2=\int_{\C^n}|f|^2e^{-\psi}\, d\lambda <+\infty.
\end{equation}

\begin{proposition} \label{prop:7.1}
Let $f\in L^2(\C^n,\psi)\cap\OO(\C^n)$ for some
measurable $\psi\colon \C^n\to \overline \R$
with power series  expansion
$f(z)=\sum_{\alpha\in \N^n}a_\alpha z^\alpha$ at the origin.
Then for every polyannulus 
$A_{\sigma,\tau}=\{\zeta\in \C^n\,;\, e^{\sigma_j}\leq 
|\zeta_j|<e^{\tau_j}\}$ in $\C^n$, where
$\sigma,\tau \in \R^n$,  $\sigma_j<\tau_j$ for 
$j=1,\dots,n$,  with volume 
$v(A_{\sigma,\tau})=\pi^n\prod_{j=1}^n(e^{2\tau_j}-e^{2\sigma_j})$,   we have
\begin{equation}   \label{eq:7.2}
  |a_\alpha|
\leq \dfrac {\|f\|_\psi}{v(A_{\sigma,\tau})}
\bigg(\int_{A_{\sigma,\tau}} 
\dfrac{e^{\psi(\zeta)}}{|\zeta^\alpha|^2}
 \, d\lambda(\zeta)\bigg)^{1/2}.
\end{equation}
Furthermore, if  $\psi$ is rotationally invariant in each
variable $\zeta_j$, then for 
$K_{\sigma,\tau}=\prod_{j=1}^n [\sigma_j,\tau_j] \subset \R^n$
and
$\chi(\xi)=\tfrac 12 \psi(e^{\xi_1},\dots,e^{\xi_n})$
we have 
\begin{equation}  \label{eq:7.3}
  |a_\alpha| \leq 
  \dfrac {\|f\|_\psi}{\prod_{j=1}^n(1-e^{-2(\tau_j-\sigma_j)})}
e^{-\scalar{\mathbf 1}\tau}
\bigg(\int_{K_{\sigma,\tau}} 
e^{2(\chi(\xi)-\scalar \alpha\xi)} \, d\lambda(\xi)\bigg)^{1/2}.    
\end{equation}
\end{proposition}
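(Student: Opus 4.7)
The plan is to apply the Cauchy integral formula for the coefficients $a_\alpha$, average over the radii in the polyannulus $A_{\sigma,\tau}$, and then invoke the Cauchy--Schwarz inequality. Since $f$ is entire, for every $r_1,\dots,r_n>0$ the Cauchy formula on the polycircle $\{|\zeta_j|=r_j\}$ gives
\begin{equation*}
 a_\alpha = \dfrac{1}{(2\pi)^n r^\alpha}\int_{[0,2\pi]^n} f(re^{i\theta})\,e^{-i\scalar\alpha\theta}\,d\theta.
\end{equation*}
Multiplying both sides by $r_1\cdots r_n$ and integrating $dr$ over $\prod_{j=1}^n[e^{\sigma_j},e^{\tau_j}]$ produces $a_\alpha\prod_j\tfrac12(e^{2\tau_j}-e^{2\sigma_j})=a_\alpha v(A_{\sigma,\tau})/(2\pi)^n$ on the left, while on the right the element $r_1\cdots r_n\,dr\,d\theta$ is exactly the polar Lebesgue element and $1/(r^\alpha e^{i\scalar\alpha\theta})=\overline{\zeta^\alpha}/|\zeta^\alpha|^2$. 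This yields the averaged Cauchy identity
\begin{equation*}
 a_\alpha\,v(A_{\sigma,\tau})=\int_{A_{\sigma,\tau}} f(\zeta)\,\dfrac{\overline{\zeta^\alpha}}{|\zeta^\alpha|^2}\,d\lambda(\zeta).
\end{equation*}

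For \eqref{eq:7.2}, I would take absolute values on both sides and split the integrand as $\big(|f(\zeta)|\,e^{-\psi(\zeta)/2}\big)\cdot\big(e^{\psi(\zeta)/2}/|\zeta^\alpha|\big)$. The Cauchy--Schwarz inequality, combined with the estimate $\big(\int_{A_{\sigma,\tau}}|f|^2e^{-\psi}\,d\lambda\big)^{1/2}\leq\|f\|_\psi$, gives \eqref{eq:7.2}.

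For \eqref{eq:7.3}, I would use the rotational invariance of $\psi$ in each variable, so that $\psi(\zeta)=2\chi(\Log\,\zeta)$, and change variables $\zeta_j=e^{\xi_j+i\theta_j}$ in the integral on the right of \eqref{eq:7.2}. This gives $d\lambda=\prod_j e^{2\xi_j}\,d\xi_j\,d\theta_j$ and $|\zeta^\alpha|^2=e^{2\scalar\alpha\xi}$; the $\theta$-integration trivially contributes $(2\pi)^n$, and the remaining $\xi$-integrand is $e^{2(\chi(\xi)-\scalar\alpha\xi)+2\scalar{\mathbf 1}\xi}$ on $K_{\sigma,\tau}$. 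Bounding $e^{2\scalar{\mathbf 1}\xi}\leq e^{2\scalar{\mathbf 1}\tau}$ there and substituting the factorization $v(A_{\sigma,\tau})=\pi^n e^{2\scalar{\mathbf 1}\tau}\prod_j(1-e^{-2(\tau_j-\sigma_j)})$, the prefactor in \eqref{eq:7.2} collapses to
\begin{equation*}
 \dfrac{(2\pi)^{n/2}e^{\scalar{\mathbf 1}\tau}}{v(A_{\sigma,\tau})}=\dfrac{(2/\pi)^{n/2}}{e^{\scalar{\mathbf 1}\tau}\prod_j(1-e^{-2(\tau_j-\sigma_j)})}\leq\dfrac{e^{-\scalar{\mathbf 1}\tau}}{\prod_j(1-e^{-2(\tau_j-\sigma_j)})},
\end{equation*}
the last step using $(2/\pi)^{n/2}<1$. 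This delivers \eqref{eq:7.3}. No step is subtle; the only real care needed is bookkeeping the constants in the polar change of variables and in the volume formula for the polyannulus.
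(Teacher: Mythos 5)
Your proposal is correct and follows essentially the same route as the paper: the Cauchy coefficient formula averaged over the radii of the polyannulus, Cauchy--Schwarz with the splitting $|f|e^{-\psi/2}\cdot e^{\psi/2}/|\zeta^\alpha|$ for \eqref{eq:7.2}, and passage to logarithmic coordinates for \eqref{eq:7.3}. Your explicit bookkeeping of the constant, showing the derivation actually yields an extra harmless factor $(2/\pi)^{n/2}<1$, is a slightly more careful rendering of the same final step.
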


\begin{proof} By the  Cauchy formula for derivatives 
\begin{multline*}
  a_\alpha 
=\dfrac 1{(2\pi i)^n}\int_{C_r} \dfrac{f(\zeta)}{\zeta^\alpha}\cdot
\dfrac{d\zeta_1\cdots d\zeta_n}{\zeta_1\cdots\zeta_n} \\
=\dfrac 1{(2\pi)^n}\int_{[-\pi,\pi]^n} 
\dfrac{f(r_1e^{i\theta_1},\dots,r_ne^{i\theta_n})}{r^\alpha
e^{i\scalar\alpha\theta}}\, d\theta_1\cdots d\theta_n,
\end{multline*}
where $C_r=\{z\in \C^n\,;\, |z_j|=r_j\}$, 
is any polycircle with  center $0$ and poly\-radius  $r\in \R_+^{*n}$.  
We parametrize $C_r$ by 
$[-\pi,\pi]^n\ni \theta\mapsto 
(r_1e^{i\theta_1},\dots,r_ne^{i\theta_n})$, 
 multiply the integral 
by $r_1\cdots r_n\, dr_1\cdots dr_n$, integrate with 
respect to  $r_j$ over 
$[e^{\sigma_j},e^{\tau_j}]$, note that
$\int_{[e^{\sigma_j},e^{\tau_j}]}r_j\, dr_j=\tfrac
12(e^{2\tau_j}-e^{2\sigma_j})$, set  
$L_{\sigma,\tau}=\prod_{j=1}^n\big([e^{\sigma_j},e^{\tau_j}]
\times[-\pi,\pi]\big)$, 
and  get
\begin{align*}
  a_\alpha 
&=\dfrac 1{v(A_{\sigma,\tau})}
\int_{L_{\sigma,\tau}} 
\dfrac{f(r_1e^{i\theta_1},\dots,r_ne^{i\theta_n})}{r^\alpha
e^{i\scalar\alpha\theta}}\, (r_1\, dr_1d\theta_1)\cdots (r_n\,
dr_nd\theta_n)
\\
&=\dfrac 1{v(A_{\sigma,\tau})} \int_{A_{\sigma,\tau}}
\dfrac{f(\zeta)}{\zeta^\alpha} \, d\lambda(\zeta)
=\dfrac 1{v(A_{\sigma,\tau})} \int_{A_{\sigma,\tau}}
f(\zeta)e^{-\psi(\zeta)/2} 
\cdot \dfrac{e^{\psi(\zeta)/2}}{\zeta^\alpha} \, d\lambda(\zeta).  
\end{align*}
Now (\ref{eq:7.2}) follows from the Cauchy-Schwarz inequality 
and (\ref{eq:7.3}) by inte\-grating over the angular variables
and also using the fact that   
$v(A_{\sigma,\tau})=\pi^n\prod_{j=1}^n(e^{2\tau_j}-e^{2\sigma_j})$.
\end{proof}

\begin{theorem} \label{thm:7.2}
Let $S$ be a compact convex subset of $\R_+^n$ with  $0\in S$,
$m\in\N^*$, and $d_m=d(mS,\N^n\setminus mS)$ denote
the euclidean distance between $mS$ and $\N^n\setminus mS$.
If $f\in \OO(\C^n)$ and for some 
$a\in[0,d_m[$
\begin{equation}
  \label{eq:7.4}
\int_{\C^n}|f|^2(1+|\zeta|^2)^{-a}e^{-2mH_S} \,
d\lambda <+\infty,
\end{equation}
then $f\in {\mathcal P}_m^{\widehat S_\Gamma}(\C^n)$, where
$\widehat S_\Gamma$ is the $\Gamma$-hull of $S$  and 
$\Gamma=\Gamma_a$ consists   of all $\xi$ such that 
the angle between the vectors ${\mathbf 1}=(1,\dots,1)$ and $\xi$ is 
$\leq \arccos(-(d_m-a)/\sqrt n)$.
\end{theorem}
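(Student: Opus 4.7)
The plan is to expand $f = \sum_{\alpha \in \N^n} a_\alpha z^\alpha$ in power series and to prove that $a_\alpha = 0$ whenever $\alpha \notin m\widehat S_\Gamma$. The main tool will be the weighted Cauchy estimate \eqref{eq:7.3} of Proposition \ref{prop:7.1}, applied to the rotationally symmetric weight $\psi = 2mH_S + a\log(1+|\cdot|^2)$. The associated function is $\chi(\xi) = m\varphi_S(\xi) + \tfrac{a}{2}\log\bigl(1 + \sum_j e^{2\xi_j}\bigr)$, which is bounded above by $m\varphi_S(\xi) + a\|\xi^+\|_\infty + O(1)$ as $|\xi| \to \infty$, since $\max_j\xi_j^+$ dominates the logarithm for large $\xi$.

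Two facts will guide the choice of cube in \eqref{eq:7.3}. First, by the definition of the $\Gamma$-hull, $\alpha \notin m\widehat S_\Gamma$ furnishes some $\xi_0 \in \Gamma$ with $\langle\alpha, \xi_0\rangle > m\varphi_S(\xi_0)$, and the angular condition on $\Gamma$ reads $\langle\mathbf{1}, \xi_0\rangle \geq -(d_m - a)|\xi_0|$. Second, since $mS \subseteq m\widehat S_\Gamma$ we have $\alpha \in \N^n \setminus mS$; the nearest point $s^* \in mS$ to $\alpha$ then satisfies $|\alpha - s^*| \geq d_m$, and, by the projection characterization of nearest points in a convex set, $\xi^* := \alpha - s^*$ is an outer normal to $mS$ at $s^*$, giving $m\varphi_S(\xi^*) = \langle s^*, \xi^*\rangle$ and consequently $\langle\alpha, \xi^*\rangle - m\varphi_S(\xi^*) = |\xi^*|^2 \geq d_m^2$.

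The direct step is to apply \eqref{eq:7.3} along cubes $\sigma = t\xi^*$, $\tau = t\xi^* + \mathbf{1}$ as $t \to +\infty$. Using componentwise monotonicity of $\varphi_S$ on $\R^n$ extended from $\R^n_+$, the bound on $\chi$ above, and the fact that $\alpha \in \R^n_+$ implies $\min_K \langle\alpha, \cdot\rangle = t\langle\alpha, \xi^*\rangle$, formula \eqref{eq:7.3} should collapse to
\[
|a_\alpha| \leq C_\alpha \exp\Bigl(-t\bigl[|\xi^*|^2 + \langle\mathbf{1}, \xi^*\rangle - a\|\xi^{*+}\|_\infty\bigr] + O(1)\Bigr).
\]
When $\xi^* \in \Gamma$, the angular condition combined with $\|\xi^{*+}\|_\infty \leq |\xi^*|$ and $|\xi^*| \geq d_m$ bounds the bracket below by $|\xi^*|(|\xi^*| - d_m) \geq 0$, strictly positive except in the extremal configuration where $|\xi^*| = d_m$ and $\xi^*$ lies on the boundary of $\Gamma$; this forces $a_\alpha = 0$ as $t \to +\infty$.

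The main obstacle will be handling the case $\xi^* \notin \Gamma$ and the extremal equality case. In both situations I plan to return to the separating direction $\xi_0 \in \Gamma$ and apply \eqref{eq:7.3} along shifted cubes $\sigma = t(\xi_0 + c\mathbf{1})$, $\tau = t(\xi_0 + c\mathbf{1}) + \mathbf{1}$, where $c > 0$ is chosen so that $\xi_0 + c\mathbf{1}$ lies in $\Gamma$ (possible because $\mathbf{1}$ is the axis of $\Gamma$). The additional $c$-linear contribution $c(\|\alpha\|_1 + n - m\sigma_S - a)$ to the Cauchy exponent, together with the strict inequality $a < d_m$ and the precise angular opening $\arccos(-(d_m-a)/\sqrt n)$ of $\Gamma$, should supply the slack needed to render the bracket strictly positive in every remaining configuration, forcing $a_\alpha = 0$ and completing the identification of $f$ as an element of $\mathcal{P}^{\widehat S_\Gamma}_m(\C^n)$.
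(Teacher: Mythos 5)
Your overall strategy coincides with the paper's: expand $f$ in a power series and kill the coefficients $a_\alpha$ for $\alpha\in\N^n\setminus m\widehat S_\Gamma$ by applying the weighted Cauchy estimate \eqref{eq:7.3} on cubes $tK_{\sigma,\tau}$ pushed to infinity along a well-chosen direction, with the exponent controlled by $\langle\alpha,\xi\rangle-m\varphi_S(\xi)-a\|\xi^+\|_\infty+\langle\mathbf 1,\xi\rangle$ and the angular condition defining $\Gamma$. Your quantitative input $\langle\alpha,\xi^*\rangle-m\varphi_S(\xi^*)=|\xi^*|^2\ge d_m^2$ for the nearest-point direction $\xi^*=\alpha-s^*$ is correct and is in fact a cleaner justification of the $d_m$-bound than the paper's corresponding assertion, which simply picks an interior direction $\tau$ of $\Gamma$ witnessing $\alpha\notin m\widehat S_\Gamma$ and claims $\langle\alpha,\tau\rangle-m\varphi_S(\tau)\ge d_m$ for that $\tau$.

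However, there is a genuine gap where you organize the case analysis around $\xi^*$ rather than around a separating direction in $\Gamma$. The hypothesis $\alpha\notin m\widehat S_\Gamma$ only guarantees some $\xi_0\in\Gamma$ with $\langle\alpha,\xi_0\rangle>m\varphi_S(\xi_0)$, and this gap can be arbitrarily small; the direction $\xi^*$, for which you do have the quantitative bound $|\xi^*|\ge d_m$, need not lie in $\Gamma$ at all. Your proposed repair for this case --- replacing $\xi_0$ by $\xi_0+c\mathbf 1$ and invoking the extra term $c\bigl(\|\alpha\|_1+n-m\sigma_S-a\bigr)$ --- does not work as stated: that coefficient has no definite sign (for $\|\alpha\|_1$ small and $m\sigma_S$ large it is negative, so the shift makes the exponent worse), and even when it is positive nothing relates it to the possibly tiny separation $\langle\alpha,\xi_0\rangle-m\varphi_S(\xi_0)$, so the bracket cannot be shown positive. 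The same objection applies to your treatment of the extremal configuration $|\xi^*|=d_m$, $\xi^*\in\partial\Gamma$, where the bracket vanishes and the $c$-shift again has uncontrolled sign. So the case $\xi^*\notin\Gamma$ (and the boundary case) is exactly where the definition of the $\Gamma$-hull has to enter, and your argument does not close it; you would need to show that whenever some $\xi_0\in\Gamma$ separates $\alpha$ from $mS$, there is a direction $\eta$ with $\langle\alpha,\eta\rangle-m\varphi_S(\eta)>a\|\eta^+\|_\infty-\langle\mathbf 1,\eta\rangle$, for instance by interpolating between $\xi_0$ and $\xi^*$, and this step is missing.
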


	\begin{figure}[h]
		\centering
		\begin{tikzpicture}
      scale = 1.1, every node/.style={transform shape}]
			\clip (-2.8, -2.8) rectangle (2.8, 2.8);

			% keila
			\draw[fill = gray!20]
				(-10, 3) --
				(0, 0) --
				(10, -35) --
				(10, 10);

			% punktarlína
			%\node at (1.8, -1.4) {\small $\scalar{\xi}{\mathbf 1} = 0$};
			%\draw[dotted] (-10, 10) -- (10, -10);

			%vigur
			\draw[->] (0, 0) -- (2, 2);
			\node at (1.4, 1.8) {\small $\mathbf{1}$};
			\node at (2.3, -0.8) {\small $\Gamma$};

			% ásar
			\draw (0, -10) -- (0, 10);
			\draw (-10, 0) -- (10, 0);

			% horn
			\node at (1.2, 0.5) {\small $\theta$};
			\path[clip] (1, 1) -- (0, 0) -- (10, -35) -- cycle; % þessir þurfa að vera neðst
			\draw (0, 0) circle (1);
		\end{tikzpicture}

{\small Figure 1: The cone $\Gamma$ has  opening angle 
$\theta = \arccos(-(d_m - a)/\sqrt{n})$}
	\end{figure}

\noindent
Observe that the largest possible $d_m$ is $1$ and 
smallest possible $a$ is $0$, which implies that
the largest possible opening angle of the cone $\Gamma$
is $\arccos(-1/\sqrt n)$ in the case $d_m=1$ and 
$\Gamma=\R^n\setminus \R^{*n}_-$. 
If $a_0$ is the infimum of $a$ such that 
\eqref{eq:7.4} holds, then $\Gamma_{a_0} = 
\bigcup_{a>a_0} \Gamma_a$.
Therefore $f$ is a polynomial with exponents in 
$m \widehat S_{\Gamma(a_0)} = \bigcap_{a>a_0}
m\widehat S_{\Gamma(a)}$.

We are interested in conditions on
cones $\Lambda\subseteq \Gamma$
which guarantee that $\widehat S_\Lambda =S$:

\begin{corollary} \label{cor:7.7}  The function $f$ in
Theorem~\ref{thm:7.2} is in  ${\mathcal P}_m^{S}(\C^n)$
in the cases:
\begin{enumerate}
\item [{\bf(i)}] $S=\widehat S_{\Lambda}$ for some cone 
$\Lambda$ contained in 
$\{\xi\in \R^n \,;\, \scalar{\mathbf 1}\xi\geq 0\}$.
\item [{\bf(ii)}] $S$ is a lower set, that is $S=\widehat S_{\R^n_+}$.
\item [{\bf(iii)}]  $(mS)\cap \N^n=(m\widehat S_{\Gamma})\cap \N^n$. 
\end{enumerate}
\end{corollary}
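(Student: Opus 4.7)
The plan is to reduce each of the three cases to the statement that the polynomial space $\mathcal{P}_m^{\widehat{S}_\Gamma}(\C^n)$ furnished by Theorem~\ref{thm:7.2} actually coincides with $\mathcal{P}_m^S(\C^n)$. Since by the definition of the polynomial spaces (\ref{eq:2.1}) one has $\mathcal{P}_m^T(\C^n) = \mathcal{P}_m^{T'}(\C^n)$ whenever $(mT)\cap \N^n = (mT')\cap \N^n$, and since $S\subseteq \widehat{S}_\Gamma$ holds automatically (from the comment after Definition~\ref{def:5.5}), it suffices in cases (i) and (ii) to show $\widehat{S}_\Gamma = S$, while in case (iii) the hypothesis gives the conclusion directly.

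The key geometric observation for (i) is that the cone $\Gamma = \Gamma_a$ always contains the half-space $H_+ = \{\xi\in\R^n\,;\, \scalar{\mathbf 1}\xi \geq 0\}$. Indeed, because $a < d_m$ we have $-(d_m-a)/\sqrt n < 0$, so the opening angle $\theta = \arccos(-(d_m-a)/\sqrt n)$ strictly exceeds $\pi/2$; hence every $\xi$ forming an angle $\leq \pi/2$ with $\mathbf 1$ satisfies the defining condition of $\Gamma$, giving $H_+ \subseteq \Gamma$. If now $\Lambda \subseteq H_+$ is a cone with $S = \widehat{S}_\Lambda$, then $\Lambda \subseteq \Gamma$, and the monotonicity $\Gamma_1\subseteq \Gamma_2 \Rightarrow \widehat{S}_{\Gamma_2}\subseteq \widehat{S}_{\Gamma_1}$ noted right after Definition~\ref{def:5.5} yields $\widehat{S}_\Gamma \subseteq \widehat{S}_\Lambda = S$. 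Combined with $S\subseteq \widehat{S}_\Gamma$ this proves $S = \widehat{S}_\Gamma$, and Theorem~\ref{thm:7.2} then gives $f\in \mathcal{P}_m^S(\C^n)$.

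Part (ii) is the special case of (i) with $\Lambda = \R^n_+$, since every $\xi\in \R^n_+$ satisfies $\scalar{\mathbf 1}\xi \geq 0$ and thus $\R^n_+\subseteq H_+$. Part (iii) is immediate from (\ref{eq:2.1}) as already noted. The only nontrivial ingredient is the containment $H_+\subseteq \Gamma$, which is a one-line consequence of the strictness $a<d_m$ built into the hypotheses of Theorem~\ref{thm:7.2}; there is no serious obstacle.
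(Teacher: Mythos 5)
Your proposal is correct, and it follows exactly the route the paper intends (the corollary is left without an explicit proof, but the surrounding remarks about cones $\Lambda\subseteq\Gamma$ with $\widehat S_\Lambda=S$ and about the opening angle of $\Gamma$ exceeding $\pi/2$ point to precisely your argument). The one substantive ingredient — that $a<d_m$ forces $\scalar{\mathbf 1}\xi\geq 0$ to imply $\scalar{\mathbf 1}\xi\geq -(d_m-a)|\xi|$, hence $\{\xi\,;\,\scalar{\mathbf 1}\xi\geq 0\}\subseteq\Gamma_a$ — is verified correctly, and the reductions of (ii) to (i) via $\R^n_+\subseteq\{\xi\,;\,\scalar{\mathbf 1}\xi\geq 0\}$ and of (iii) to the definition of ${\mathcal P}^S_m(\C^n)$ are sound.
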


\begin{prooftx}{Proof of Theorem \ref{thm:7.2}}
Let $f(z)=\sum_{\alpha\in \N^n} a_\alpha z^\alpha$ be
  the Taylor expansion of $f$ at the origin.  
We need to show
that $a_\alpha=0$ for every $\alpha\in \N^n\setminus m\widehat S_\Gamma$.
Since $\alpha\not\in m \widehat S_\Gamma$, there exists $\tau \in \Gamma$
such that $|\tau|=1$ and $\scalar \alpha\tau >m\varphi_S(\tau)$.
By rotating $\tau$ we may assume that $\tau$ is an interior point
of $\Gamma$.  Since the angle between $\tau$ and ${\mathbf 1}$ is
$\leq \arccos(-(d_m-a)/\sqrt n)$ we have 
$-\scalar{\mathbf 1}\tau <d_m-a$.
We choose $\varepsilon >0$ such that
 $d_m-a-\varepsilon>0$, and
$-\scalar{\mathbf 1}\tau <d_m-a-\varepsilon$.
Recall that $\scalar\alpha\tau-m\varphi_S(\tau)$ is the euclidean
distance from $\alpha$ to the supporting hyperplane
$\{x\,;\, \scalar x\tau=m\varphi_S(\tau)\}$, so by assumption
$m\varphi_S(\tau)-\scalar\alpha\tau\leq -d_m$.   Hence
\begin{equation*}
-\scalar{\mathbf 1}\tau + m\varphi_S(\tau)-\scalar\alpha\tau
<-a-\varepsilon.
\end{equation*}
We take $\sigma\in \R^n\setminus \{0\}$ with $\sigma_j<\tau_j$ for
$j=1,\dots,n$ such that
\begin{equation*}
-\scalar{\mathbf 1}\tau + m\varphi_S(\xi) -\scalar\alpha\xi
<-(a+\varepsilon)|\xi|, \qquad 
\xi\in K_{\sigma,\tau}=\prod_{j=1}^n[\sigma_j,\tau_j].
\end{equation*}
By homogeneity we get 
\begin{equation*}
-t\scalar{\mathbf 1}\tau + m\varphi_S(\xi)-\scalar\alpha\xi
<-(a+\varepsilon)|\xi|, \qquad t>0, \ \xi\in tK_{\sigma,\tau}.
\end{equation*}
Let $\xi_j=\log|\zeta_j|$ and observe that
$(1+|\zeta|^2)^a \leq (n+1)^a
\max\{1,\|\zeta\|_\infty^{2a} \}$. From this inequality and
\eqref{eq:7.4} it follows that
$f\in L^2(\C^n,\psi)$,  where
\begin{equation*}
\tfrac 12 \psi(\zeta)={a}\log\|\zeta\|_\infty+mH_S(\zeta)  
=a \|\xi\|_\infty+m\varphi_S(\xi).
\end{equation*}
We set $\chi(\xi)=\tfrac 12 \psi(e^{\xi_1},\dots,e^{\xi_n})$.
Then 
\begin{equation*}
-t\scalar{\mathbf 1}\tau + \chi(\xi)-\scalar\alpha\xi
<-\varepsilon|\xi|, \qquad t>0, \ \xi\in tK_{\sigma,\tau}.
\end{equation*}
The estimate (\ref{eq:7.3}) with $tK_{\sigma,\tau}$ in the
role  of $K_{\sigma,\tau}$ gives 
\begin{align*}
  |a_\alpha| &\leq 
  \dfrac {\|f\|_\psi}{\prod_{j=1}^n(1-e^{-2(\tau_j-\sigma_j)t})}
e^{-t\scalar{\mathbf 1}\tau}
\bigg(\int_{tK_{\sigma,\tau}} 
e^{2(\chi(\xi)-\scalar\alpha\xi)} \, d\lambda(\xi)\bigg)^{1/2}\\
&\leq \dfrac {\|f\|_\psi}{\prod_{j=1}^n(1-e^{-2(\tau_j-\sigma_j)t})}
e^{-\varepsilon|\sigma|t}t^{n/2} v(K_{\sigma,\tau})^{1/2} \to 0,
\qquad t\to +\infty,
\nonumber    
\end{align*}
and we conclude that $a_\alpha=0$.
\end{prooftx}

\begin{figure}[!h]
\def\mynda{0.1}
\def\myndb{0.93}
\def\myndm{5}
\def\myndscale{0.8} % stillir stærðina á myndunum
\centering
\begin{tikzpicture}[baseline={(2,2)},
scale = \myndscale, every node/.style={transform shape}]
	\clip (-1, -1) rectangle (7, 7);
	% ásar
	\draw[->] (-0.3, 0) -- (5.3, 0);
	\draw[->] (0, -0.3) -- (0, 5.3);

	% S
	\draw[fill = gray!20]
		(0, \myndm) --
		(\myndm*\myndb, \myndm - \myndm*\myndb) --
		(\myndm*\mynda, 0) --
		(0, 0) --
		(0, \myndm);
	\node at (1, 2) {$mS$};

	\draw[dotted] (\myndm*\myndb, \myndm - \myndm*\myndb) -- (\myndm, 0);
	%\draw[dotted] (\myndm*\myndb, \myndm - \myndm*\myndb) -- (2*\myndm*\myndb - \myndm, 0);
	%\draw[dotted] (0, 1) -- (\myndm - 1, 1);

	% punktar og merkingar
	\node[draw, fill, circle, inner sep = 0.5pt] at (0, \myndm) {};
	\node at (-0.5, \myndm) {\small $(0, m)$};

	\node[draw, fill, circle, inner sep = 0.5pt] at (\myndm*\mynda, 0) {};
	\node at (\myndm*\mynda +0.1, 0.3) {\small $(ma, 0)$};

	%\node[draw, fill, circle, inner sep = 0.5pt] at (\myndm - 1, 0) {};
	%\node at (\myndm - 1, -0.2) {\small $(m\! -\! 1, 0)$};

	\node[draw, fill, circle, inner sep = 0.5pt] at (\myndm, 0) {};
	\node at (\myndm + 0.2, -0.2) {\small $(m, 0)$};

	\node[draw, fill, circle, inner sep = 0.5pt] at (0, 1) {};
	\node at (-0.5, 1) {\small $(0, 1)$};

	\node[draw, fill, circle, inner sep = 0.5pt] at (1, 0) {};
	\node at (1, -0.2) {\small $(1, 0)$};

	\node[draw, fill, circle, inner sep = 0.5pt] at (\myndm*\myndb, \myndm - \myndm*\myndb) {};
	\node at (\myndm*\myndb + 1, \myndm - \myndm*\myndb + 0.3) {\small $(mb, m(1\! -\! b))$};
\end{tikzpicture}
\begin{tikzpicture}[baseline={(0,0)},
scale = \myndscale, every node/.style={transform shape}]
	\clip (-2.8, -2.8) rectangle (2.8, 2.8);
	% N_S(0, 0)
	\draw[fill = gray!20]
		(0, 0) --
		(-10, 0) --
		(0, -10) --
		(0, 0);
	\node at (-1.4, -1.4) {\small $N^{S}_{(0,0)}$};

	% N_S(0, m)
	\draw[fill = gray!20]
		(0, 0) --
		(-5, 0) --
		(-5, 5) --
		(5, 5) --
		(0, 0);
	\node at (-0.5, 1.4) {\small $N^{S}_{(0,1)}$};

	% N_S(mb, m(1 - b))
	\draw[fill = gray!20]
		(0, 0) --
		(1 , {\myndm - \myndb*\myndm - 1*(\myndb - \mynda)/(1 - \myndb) - \myndm + \myndb*\myndm}) --
		(10, \myndm - \myndb*\myndm + 10 - \myndm + \myndb*\myndm) --
		(0, \myndm - \myndb*\myndm - \myndm + \myndb*\myndm);
	\node at (1.4, -0.2) {$N^{S}_{(b,(1-b))}$};

	% N_S(ma, 0)
	\draw[fill = gray!20]
		(0, 0) --
		(1, {-1*(\myndb - \mynda)/(1 - \myndb)}) --
		(0, -4) --
		(0, 0);
	\node at (0.9, -2.5) {\small $N^{S}_{(a,0)}$};
\end{tikzpicture}

\small{Figure 2: (a) The set $S$. (b) The normal cones $N_s^S$ of the extreme 
points of $S$.}

\end{figure}

\bigskip\medskip
The $\Gamma$-hull 
$\widehat S_\Gamma$ can not be replaced by $S$ in 
Theorem \ref{thm:7.2}:
\begin{example}\label{ex:7.4} 
Let $m\geq 4$ and $S\subseteq \R_+^2$ be the quadrilateral 
\begin{equation*}
S=\ch\{(0,0), (a,0), (b,1-b),(0,1)\}.  
\end{equation*}
where $0<a<1/m$ , $0<a<b<1$, $m(1-b)<1$, and $(b-a)/(1-b)>m-2-am$.
Then $(1,0),(2,0), \dots , (m-3,0)\notin mS$, but the 
calculations below show that $\|p_k\|_{2mH_S}<+\infty$ for 
$p_k(z)=z^{(k,0)}=z_1^k$ with $k=1,\dots, m-3$.

Since the map $(\R\times ]-\pi,\pi[)^2\to 
\C^2$, $(\xi_1,\theta_1,\xi_2,\theta_2)\mapsto 
(e^{\xi_1+i\theta_1},e^{\xi_2+i\theta_2})$ has the Jacobi determinant
$e^{2\xi_1+2\xi_2}$, we have
\begin{gather*}
  \|p_k\|_{2mH_S}^2=\int_{\C^2}|z_1|^{2k}e^{-2mH_S(z)}\, d\lambda(z)
=4\pi^2 \int_{\R^2}e^{2(k+1)\xi_1+2\xi_2-2m\varphi_S(\xi)}\,
                 d\xi_1d\xi_2,
\\
 \varphi_S(\xi)=
	\max_{x\in \operatorname{ext}(S)} \langle x,\xi \rangle
	=
 \begin{cases}
 0, &\xi \in N^S_{(0,0)},\\   
a\xi_1, &\xi \in N^S_{(a,0)},\\
b\xi_1+(1-b)\xi_2,  &\xi \in N^S_{(b,(1-b))},\\
\xi_2, &\xi \in N^S_{(0,1)}.  
 \end{cases}  
\end{gather*}
We split the  integral over $\R^2$ into the sum of the
integrals over the normal cones at the extreme points of $S$, which we
calculate as
\begin{align*}
  \int_{N^S_{(0,0)}} e^{2(k+1)\xi_1+2\xi_2}\, d\xi
&=\int_{-\infty}^0 e^{2(k+1)\xi_1} \, d\xi_1 \,   \int_{-\infty}^0
e^{2\xi_2} \, d\xi_2=\dfrac 1{4(k+1)},\\
  \int_{N^S_{(a,0)}} e^{2(k+1)\xi_1+2\xi_2-2ma\xi_1}\, d\xi
&=\int_0^{\infty} e^{2(k+1-ma)\xi_1}\,
    \int_{-\infty}^{-\xi_1(b-a)/(1-b)} e^{2\xi_2}\, d\xi_2\, d\xi_1\\
& =\dfrac {1}{4((b-a)/(1-b)+ma-1-k)},
\end{align*}

\vspace{-0.3cm}

\begin{multline*}
 \int_{N^S_{(b,1-b)}} e^{2(k+1)\xi_1+2\xi_2-2m(b\xi_1+(1-b)\xi_2)}\,
   d\xi \\ 
=\int_0^{\infty} e^{2(k+1-mb)\xi_1}\,
\int_{-\xi_1(b-a)/(1-b)}^{\xi_1} e^{2(1-m(1-b))\xi_2}\, d\xi_2\,
                                  d\xi_1 \\
 =\dfrac {1}{4(1-m(1-b))}\bigg(
\dfrac 1{m-2-k}+\dfrac{1}{(b-a)/(1-b)+ma-1-k)}
\bigg),
\end{multline*}

\vspace{-0.2cm}

\begin{align*}
 \int_{N^S_{(0,1)}} e^{2(k+1)\xi_1+2\xi_2-2m\xi_2}\, d\xi\
&=\int_0^\infty e^{2(1-m)\xi_2}
\int_{-\infty}^{\xi_2}e^{2(k+1)\xi_1}\, d\xi_1 \, d\xi_2\\
&=\dfrac {1}{4(k+1)(m-2-k)}.
\end{align*}
This shows that \eqref{eq:7.4} is satisfied with 
$p_k$ in the role of $f$, but  $p_k\not\in \mathcal P^S_m(\C^n)$.
\end{example}

\section{Pullbacks of polynomial classes  by 
polynomial maps}
\label{sec:08}

\noindent
Let  $S\subset \R^n_+$ be  compact and convex with $0\in S$ and   
$f=(f_1,\dots,f_n)\colon \C^\ell\to \C^n$ be a polynomial map.
If $f_j(z)=\sum_{\alpha\in \N^\ell}a_{j,\alpha} z^\alpha$,
$I_j=\{\alpha \in \N^\ell\,;\, a_{j,\alpha}\neq 0\}$, and 
$S_j\subset \R^\ell_+$ be the convex hull of $I_j$.
Then $f_j\in {\mathcal P}^{S_j}_1(\C^\ell)$ and for 
every   $p(w)=\sum_{\beta\in (mS)\cap \N^n} b_\beta w^\beta$ in
${\mathcal P}^S_m(\C^n)$ we have 
  \begin{equation*}
(f^*p)(z)=\sum_{\beta\in (mS)\cap \N^n} b_\beta 
f_1(z)^{\beta_1}\cdots f_n(z)^{\beta_n}.
  \end{equation*}
By Theorem  \ref{thm:3.6}  we have  $|f_j(z)|\leq
e^{c_j+H_{S_j}(z)}$  which implies that for 
every $z\in \C^{*\ell}$ 
  \begin{align*}
|(f^*p)(z)|&\leq \sum_{\beta\in (mS)\cap \N^n} |b_\beta| 
e^{\scalar c\beta +\beta_1H_{S_1}(z)+\cdots+\beta_nH_{S_n}(z)}\\
&\leq \bigg(\sum_{\beta\in (mS)\cap \N^n} |b_\beta| 
e^{\scalar c\beta} \bigg)  
e^{m\varphi_S(\varphi_{S_1}(\Log\, z),\dots,\varphi_{S_n}(\Log\, z))}.
\nonumber
  \end{align*}
We have for every $\xi\in \R^\ell$ that 
\begin{multline*}
\varphi_S(\varphi_{S_1}(\xi),\dots, \varphi_{S_n}(\xi))\\
=
\sup\{  \langle x_1s_1+\cdots +
x_n s_n, \xi\rangle;\, x\in S,\, s_j\in S_j,\, j=1,\dots, n \}\\
= \sup_{x\in S} \varphi_{x_1S_1+\cdots+x_n S_n}(\xi)
=\varphi_{S'}(\xi),\nonumber
\end{multline*}
where $S'=\bigcup_{x\in S}x_1S_1+\cdots+x_n S_n$.
The set  $S'\subset \R^\ell_+$ 
is compact and convex.

\begin{proposition}\label{prop:8.1}
Assume that $\overline{S\cap \Q^n}=S$.  Then with the notation 
above $S'$ is the smallest 
compact convex subset $T$ of $\R^\ell_+$ with $0\in T$ for which
$f^*\big({\mathcal P}^S_m(\C^n)\big) \subseteq  {\mathcal P}^{T}_m(\C^\ell)$
for all $m\in \N$.
\end{proposition}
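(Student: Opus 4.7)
The proof splits into two parts: the containment $f^*(\mathcal P^S_m(\C^n))\subseteq \mathcal P^{S'}_m(\C^\ell)$ and the minimality of $S'$. The containment follows at once from the bound $|(f^* p)(z)|\le Ce^{mH_{S'}(z)}$ already derived in the text preceding the proposition: since $mS'$ is compact, a simple argument gives $d_m=d(mS',\N^\ell\setminus mS')>0$, so condition {\bf (iv)} of Theorem \ref{thm:3.6} applied with $a=0$ yields $f^* p\in \mathcal P^{S'}_m(\C^\ell)$.

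For the minimality, take any compact convex $T\subseteq \R^\ell_+$ with $0\in T$ such that $f^*(\mathcal P^S_m(\C^n))\subseteq \mathcal P^T_m(\C^\ell)$ for every $m\in \N$, and aim to prove $x_1S_1+\cdots +x_nS_n\subseteq T$ for each $x\in S$; then $S'\subseteq T$ by definition of $S'$. I would first reduce to $x\in S\cap \Q^n$: since $T$ is closed and the Minkowski-sum map $x\mapsto x_1S_1+\cdots +x_nS_n$ is continuous in Hausdorff distance, the density hypothesis $\overline{S\cap \Q^n}=S$ allows passing to the limit. For rational $x$ pick $q\in \N^*$ with $qx\in \N^n$, so that the monomial $w^{qx}$ lies in $\mathcal P^S_q(\C^n)$, and by assumption $f^*(w^{qx})=\prod_{j=1}^n f_j^{qx_j}\in \mathcal P^T_q(\C^\ell)$; in particular its Newton polytope is contained in $qT$.

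The core convex-geometric step is to show that the Newton polytope of $\prod_{j=1}^n f_j^{qx_j}$ actually equals $\sum_{j=1}^n qx_jS_j=q(x_1S_1+\cdots +x_nS_n)$. The inclusion in the Minkowski sum is automatic from the support structure of a product. For the reverse, take an extreme point $v$ of $\sum_j qx_jS_j$ and choose $\xi$ in the interior of its normal cone at $v$; then $\xi$ uniquely maximizes on each $qx_jS_j$ at some $qx_js_j$, where $s_j$ is a vertex of $S_j$, and $v=\sum_j qx_js_j$ is the only representation of $v$ as a sum of points from the summands, since any other decomposition would force some $\xi$-value to fall strictly below the corresponding maximum. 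Since $s_j$ is a vertex of $\ch(I_j)=S_j$ we have $s_j\in I_j$ and $a_{j,s_j}\ne 0$; by uniqueness of the decomposition the coefficient of $z^v$ in $\prod_j f_j^{qx_j}$ equals $\prod_j a_{j,s_j}^{qx_j}\ne 0$, so $v$ lies in the Newton polytope.

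I expect the main obstacle to be the convex-geometric claim on vertex decompositions in Minkowski sums, together with verifying that the associated coefficient does not vanish (the essential point being that the vertex decomposition passes through vertices of each $S_j$, hence through points of $I_j$). The density hypothesis $\overline{S\cap \Q^n}=S$ is essential because it reduces the test polynomials to monomials $w^{qx}$, whose image under $f^*$ has a Newton polytope we can describe exactly; without this hypothesis the growth identification fails, as already recorded in Proposition \ref{prop:4.7}.
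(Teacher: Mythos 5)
Your proposal is correct, and for the minimality half it takes a genuinely different route from the paper. The paper argues analytically: assuming $T\subsetneq S'$ works, it picks $\xi$ with $\varphi_T(\xi)<\varphi_{S'}(\xi)$, perturbs $\xi$ into the interior of a normal cone of each $S_j$, forms the same test monomial $p(w)=w^{\gamma}$ with $\gamma=mr$, $r\in S\cap\Q^n$, and then uses Theorem \ref{thm:3.6} in the direction {\bf (i)}$\Rightarrow${\bf (iv)} to convert $f^*p\in{\mathcal P}^T_m(\C^\ell)$ into the growth bound $|f^*p|\leq e^{c+mH_T}$, which it violates along the curve $\zeta_k=(e^{k\xi_1},\dots,e^{k\xi_\ell})$ by showing $f_j(\zeta_k)/\zeta_k^{\alpha_j}\to a_{j,\alpha_j}\neq 0$. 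You instead stay entirely at the level of lattice supports: you identify the Newton polytope of $f^*(w^{qx})=\prod_j f_j^{qx_j}$ exactly as $\sum_j qx_jS_j$ via the Minkowski-sum property, with the non-cancellation of the vertex coefficient $\prod_j a_{j,s_j}^{qx_j}$ playing the role that the asymptotic limit $f_j(\zeta_k)/\zeta_k^{\alpha_j}\to a_{j,\alpha_j}$ plays in the paper --- the same generic-$\xi$-in-a-normal-cone trick underlies both. Your version buys a cleaner logical shape (it proves $S'\subseteq T$ directly for every admissible $T$, whereas the paper's contradiction argument strictly speaking only treats $T\subsetneq S'$ and needs the small extra remark that a general $T$ can be replaced by $T\cap S'$), and it avoids invoking the growth characterization a second time; the paper's version has the advantage of reusing the analytic machinery of Theorem \ref{thm:3.6} that the rest of the article is built on. Two small points you should make explicit when writing it up: the factors with $qx_j=0$ contribute trivially, and the fact that a direction in the interior of the normal cone of the Minkowski sum at a vertex exposes a single vertex of each summand (so that the vertex decomposition is unique and passes through points of $I_j$) deserves a sentence of justification.
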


\begin{proof} Assume that $T \subsetneq S'$ such that 
$f^*\big({\mathcal P}^S_m(\C^n)\big)
\subseteq {\mathcal P}^T_m(\C^\ell)$  for every $m\in \N$.
Then there exists $\xi\in \R^\ell$ such that
$\varphi_T(\xi) < \varphi_{S'}(\xi)$ and  since  $S\cap \Q^n$ is dense 
in $S$ we can choose $r\in S\cap \Q^n$ such that 
\begin{equation}
  \label{eq:8.1}
  \varphi_T(\xi) < r_1\scalar{\alpha_1}\xi+\cdots
+r_n\scalar{\alpha_n}\xi \leq \varphi_{S'}(\xi).
\end{equation}
Now we fix  $m\in \N$ such that $\gamma = m r\in \N^n$,  
define  $p\in {\mathcal P}^S_\mu(\C^n)$ by $p(w)=w^\gamma$.
By Theorem \ref{thm:3.6}, there exists  a constant $c_p$ such that 
\begin{equation}
  \label{eq:8.2}
  |f^*p(z)|\leq e^{c_p+m H_T(z)}, \qquad z\in \C^\ell.
\end{equation}
Since $S_j$ is a convex polytope in $\R^\ell_+$, the set
$\bigcap_{j=1}^n \bigcup_{\alpha\in \ext S_j}\mathring{N}^{S_j}_\alpha$ is
dense in $\R^\ell$, where $\mathring{N}^{S_j}_{\alpha}$ is the interior
of $N^{S_j}_{\alpha}$. 
Hence $\xi$ may be chosen from $\mathring{N}^{S_j}_{\alpha_j}$
where  $\alpha_j\in \ext S_j$ and 
$\varphi_{S_j}(\xi)=\scalar{\alpha_j}\xi > 
\scalar {\alpha}\xi$ for  
every $\alpha\in I_j\setminus\{\alpha_j\}$.    
All the $\alpha_j$ are from $\N^\ell$ by the definition of $S_j$.
We define the sequence $(\zeta_{k})_{{k}\in \N}$ in $\C^\ell$ by
$\zeta_{k}=(e^{{k}\xi_1},\dots,e^{{k}\xi_\ell})$ and we will
show  that (\ref{eq:8.1}) implies that the sequence
$\big(f^*p(\zeta_{k})e^{-m H_T(\zeta_{k})}\big)_{{k}\in \N}$ 
is unbounded,   contradicting the estimate (\ref{eq:8.2}).
First we observe that
$\zeta_{k}^{\alpha_j}=e^{{k}\scalar{\alpha_j}\xi}$ and
then that (\ref{eq:8.1}) implies 
\begin{equation}
  \label{eq:8.3}
  (\zeta_{k}^{\alpha_1},\dots,\zeta_{k}^{\alpha_n})^\gamma
e^{-mH_T(\zeta_{k})}
=e^{{k}m(r_1\scalar{\alpha_1}\xi+\cdots+r_n\scalar{\alpha_n}\xi
-\varphi_T(\xi))} \to +\infty
\end{equation}
when ${k}\to +\infty$.
Next we observe that 
\begin{equation}
  \label{eq:8.4}
  f_j(\zeta_{k})/\zeta_{k}^{\alpha_j}=
a_{j,\alpha_j}+\sum_{\alpha\in I_j\setminus\{\alpha_j\}} a_{j,\alpha}
e^{-{k}(\scalar{\alpha_j}\xi-\scalar\alpha\xi)}\to a_{j,\alpha_j} \neq
0
\end{equation}
when ${k}\to +\infty$, and  
\begin{equation}
  \label{eq:8.5}
  f^*p(\zeta_{k})/(\zeta_{k}^{\alpha_1},\dots,\zeta_{k}^{\alpha_n})^\gamma
=(f_1(\zeta_{k})/\zeta_{k}^{\alpha_1})^{\gamma_1}\cdots
(f_n(\zeta_{k})/\zeta_{k}^{\alpha_n})^{\gamma_n}.
\end{equation}
By  combining (\ref{eq:8.3}), (\ref{eq:8.4}), and (\ref{eq:8.5}),
we see that 
$\big(f^*p(\zeta_{k})e^{-mH_T(\zeta_{k})}\big)_{{k}\in \N}$  is unbounded.
\end{proof}

\medskip
Assume now that $f$ is a proper map and that $q$ is a given admissible
weight function on a compact set $K\subseteq \C^n$.   Then $f^*q$ is lower
semicontinuous and
\begin{equation*}
\{z\in f^{-1}(K) \,;\, f^*q(z) <+\infty\}
=f^{-1}\big(\{w\in K\,;\, q(w)<+\infty \}\big).  
\end{equation*}
Since inverse images of non-pluripolar sets by proper maps are non-pluri\-polar
it follows that $f^*q$ is an admissible weight on $f^{-1}(K)$.
Furthermore, we have
\begin{equation*}
  \|f^*pe^{-mf^*q}\|_{f^{-1}(K)}=\|pe^{-mq}\|_K.
\end{equation*}
From Proposition \ref{prop:8.1} we conclude that
$f^*\big(\Phi^S_{K,q,m}\big) 
\leq \Phi^{S'}_{f^{-1}(K),f^*q,m}$, for every  $m\in \N^*$, 
consequently
$f^*\big(\Phi^S_{K,q}\big)\leq \Phi^{S'}_{f^{-1}(K),f^*q}$
and equality holds if 
$  f^*\colon {\mathcal P}^S_m(\C^n) \to  {\mathcal P}^{S'}_m(\C^\ell)$
is surjective. 

Next we look at the pullback of Lelong classes.
Let $u\in \L^S(\C^n)$, say $u\leq c_u+H_S$.
Then for every $z\in \C^\ell$ with $\Log f(z)\in \C^{*n}$ we have
\begin{align*}
  (f^*u)(z)&\leq c_u+H_S(f(z))
=c_u+\varphi_S(\log|f_1(z)|,\dots,\log|f_1(z)|)\\
&\leq c_u
+\varphi_S(c_1+\varphi_{S_1}(\Log z),\dots,c_n+\varphi_{S_n}(\Log z))
\nonumber
\\
&\leq c_u+\varphi_S(c)+\varphi_S(\varphi_{S_1}(\Log z),
\dots,\varphi_{S_n}(\Log z))
\nonumber
\\
&= c_u+\varphi_S(c)+H_{S'}(z)
\nonumber
\end{align*}
and conclude that $f^*u\in \L^{S'}(\C^\ell)$.
If $u\leq q$ on $K$, then $f^*u\leq f^*q$ on $f^{-1}(K)$, we have
$f^*\big(V^S_{K,q}\big)\leq V^{S'}_{f^{-1}(K),f^*q}$
and that equality holds if $f^*\colon \L^S(\C^n) \to \L^{S'}(\C^\ell)$
is surjective.

When $\ell=n$ we have the following weighted transformation rule, 
which generalizes Klimek \cite[Theorem 5.3.1]{Kli:1991}
and Perera \cite[Theorem 1]{Per:2023}:

\begin{proposition}\label{prop:8.2}
If  $f\colon \C^n\to \C^n$ is proper, the following are equivalent: 
\begin{enumerate}
\item[{\bf (i)}] The difference $f^*H_{S}  - H_{S'}$ is bounded, that is
  $f^*H_S\in \mathcal{L}^{S'}_+(\C^n)$. 
\item[{\bf (ii)}]   
For every compact set $K\subset \C^n$ and every
  admissible weight $q$ on $K$ we have 
$f^*V^{S}_{K, q}=V^{S'}_{f^{-1}(K),\, f^* q}$. 
     \end{enumerate}
\end{proposition}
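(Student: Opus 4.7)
The plan is to prove the two implications separately.

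For $\textbf{(ii)} \Rightarrow \textbf{(i)}$, I would specialize (ii) to $K = \overline{\D}^n$ and $q = 0$. Since $\T^n \subseteq \overline{\D}^n \subseteq \mathcal{N}(H_S)$, Proposition~\ref{prop:4.3} yields $V^S_{\overline{\D}^n} = H_S$. The preimage $f^{-1}(\overline{\D}^n)$ is compact by properness of $f$ and non-pluripolar because it contains the non-empty open set $f^{-1}(\D^n)$, so $q=0$ is an admissible weight there. Hypothesis (ii) then reads $f^*H_S = V^{S'}_{f^{-1}(\overline{\D}^n),0}$; since $H_S$ and $f$ are continuous, the left side is continuous and hence coincides with the upper regularization $V^{S'*}_{f^{-1}(\overline{\D}^n),0}$, which lies in $\L^{S'}_+(\C^n)$ by Proposition~\ref{prop:4.5}. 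This is precisely (i).

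For $\textbf{(i)} \Rightarrow \textbf{(ii)}$, the inequality $f^*V^S_{K,q} \leq V^{S'}_{f^{-1}(K),f^*q}$ was already observed in the paragraph preceding the proposition, so only the reverse inequality needs attention. Given $v \in \L^{S'}(\C^n)$ with $v \leq f^*q$ on $f^{-1}(K)$, I would consider the fibrewise supremum
$$
u(w) = \sup\{v(z) \,;\, z \in f^{-1}(w)\}, \qquad w \in \C^n.
$$
A proper polynomial self-map of $\C^n$ is finite (a compact analytic subset of $\C^n$ is finite) and surjective (its image is both open and closed in the connected space $\C^n$), so the supremum runs over a finite non-empty set. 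The bound $u \leq q$ on $K$ is immediate from $v \leq f^*q$ on $f^{-1}(K)$, since for $w \in K$ every $z \in f^{-1}(w)$ satisfies $v(z) \leq q(f(z)) = q(w)$. The growth bound $u \leq c_v + C + H_S$ follows by combining $v \leq c_v + H_{S'}$ with the lower half of (i), namely $H_{S'}(z) \leq C + H_S(f(z))$ for some constant $C$ and all $z \in \C^n$.

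Granting that $u$ is plurisubharmonic on $\C^n$, I would conclude $u \in \L^S(\C^n)$ and $u \leq q$ on $K$, hence $u \leq V^S_{K,q}$; since $v(z) \leq u(f(z)) = f^*u(z)$ by definition of $u$, taking the supremum over admissible $v$ then gives $V^{S'}_{f^{-1}(K),f^*q} \leq f^*V^S_{K,q}$. The main obstacle is verifying the plurisubharmonicity of $u$ across the branch locus of $f$: on the complement of the branch locus there are holomorphic local sections $\sigma_1,\dots,\sigma_d$ of $f$, so $u$ is locally $\max_j v \circ \sigma_j$ and hence plurisubharmonic there; across the analytic (codimension at least one) branch locus the growth bound supplies local upper boundedness. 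The cleanest way to finish is to invoke the classical pushforward theorem for plurisubharmonic functions under proper finite surjective holomorphic maps (Fornaess--Narasimhan, Demailly), which states that $u$ itself is plurisubharmonic on the target without needing any upper-semicontinuous regularization, thereby avoiding the subtle issue that replacing $u$ by $u^*$ could destroy the pointwise bound against the merely lower semicontinuous weight $q$.
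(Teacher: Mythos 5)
Your proof is correct, and the heart of it, the direction \textbf{(i)}$\Rightarrow$\textbf{(ii)}, is exactly the paper's argument: take $v\in\L^{S'}(\C^n)$ dominated by $f^*q$ on $f^{-1}(K)$, push it forward by the fibrewise maximum $u(w)=\max_{z\in f^{-1}(w)}v(z)$, verify $u\leq q$ on $K$ and the growth bound $u\leq c_v+C+H_S$ from the lower half of (i), and conclude $v\leq f^*u\leq f^*V^S_{K,q}$. The paper handles the plurisubharmonicity of the fibrewise maximum by citing Klimek, Theorem 2.9.26, which is the same pushforward theorem for proper holomorphic maps that you invoke via Fornaess--Narasimhan/Demailly, so your careful discussion of the branch locus is subsumed by that reference. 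In the direction \textbf{(ii)}$\Rightarrow$\textbf{(i)} you take a mildly different and arguably cleaner route: you apply (ii) only to $K=\overline\D^n$ with $q=0$, identify $f^*H_S$ with $V^{S'}_{f^{-1}(\overline\D^n)}$, and then use continuity of $f^*H_S$ together with Proposition \ref{prop:4.5} to place it in $\L^{S'}_+(\C^n)$ in one stroke; the paper instead applies (ii) twice, to $K=\overline\D^n$ and to $K=f(\overline\D^n)$, extracting the two inequalities separately with explicit constants $c=\max_{f^{-1}(\overline\D^n)}H_{S'}$ and $c'=\max_{f(\overline\D^n)}H_S$. Your variant buys a shorter argument at the cost of routing through the upper regularization and Proposition \ref{prop:4.5} (for which you correctly check that $f^{-1}(\overline\D^n)$ is compact and non-pluripolar, using surjectivity of the proper map $f$); the paper's version is more elementary and self-contained. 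Both are sound.
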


\begin{proof}
\textbf{(i)$\Rightarrow$(ii):} 
Let $u\in \mathcal{L}^{S'}(\C^n)$ with $u|_{f^{-1}(K)}\leq f^*q$. 
By Klimek \cite[Theorem 2.9.26]{Kli:1991} the function
$v(z)= \max\{u(w)\colon w\in f^{-1}(z)\}$ is plurisubharmonic on
$\C^n$. Let $c$ be a constant such that 
$f^*H_S-H_{S'}\geq -c$. 
Then $v|_K\leq q$ and 
$$v(z) \leq \max_{w\in f^{-1}(z)}H_{S'}(w)+ c_u 
\leq \max_{w\in f^{-1}(z)}f^*H_{S}(w) + c+c_u = H_S(z)+c+c_u.$$ 
It follows that  $u\leq f^*v\leq f^*V^S_{K,q}$. 

\smallskip \noindent
 \textbf{(ii)$\Rightarrow$(i):} 
Let $c=\max_{w\in f^{-1}(\overline{\D}^n)} H_{S'}(w)$ 
and $c'=\max_{w\in f(\overline{\D}^n)} H_S(w)$. 
Then 
\begin{equation*}
H_{S'}-c\leq V^{S'}_{f^{-1}(\overline{\D}^n)}= f^*
V^{S}_{\overline{\D}^n}= f^*H_S
\end{equation*}
and
\begin{equation}
H_{S'}= V^{S'}_{\D^n}\geq V^{S'}_{f^{-1}(f(\overline{\D}^n))}  =
f^*V^S_{f(\overline{\D}^n)}\geq f^*H_S-c'.   
\end{equation}
\end{proof}

{\small 
\bibliographystyle{siam}
\bibliography{rs_bibref}

\noindent

}%\small

\end{document}